\crefname{hypothesis}{Hypothesis}{Hypotheses}
\crefname{assumption}{Assumption}{Assumptions}
\DeclareMathOperator{\diag}{diag}
\newcounter{todoListItems}
\newcommand{\sfb}{\mathsf{b}}
\newcommand{\sfc}{\mathsf{c}}
\newcommand{\sff}{\mathsf{f}}
\newcommand{\sfg}{\mathsf{g}}
\newcommand{\sfr}{\mathsf{r}}
\newcommand{\sfs}{\mathsf{s}}
\newcommand{\sfu}{\mathsf{u}}
\newcommand{\sfv}{\mathsf{v}}
\newcommand{\sfw}{\mathsf{w}}
\newcommand{\sfz}{\mathsf{z}}
\newcommand{\sfA}{\mathsf{A}}
\newcommand{\sfB}{\mathsf{B}}
\newcommand{\sfD}{\mathsf{D}}
\newcommand{\sfE}{\mathsf{E}}
\newcommand{\sfI}{\mathsf{I}}
\newcommand{\sfJ}{\mathsf{J}}
\newcommand{\sfK}{\mathsf{K}}
\newcommand{\sfM}{\mathsf{M}}
\newcommand{\sfP}{\mathsf{P}}
\newcommand{\R}{\mathbb{R}} 
\title{Algebraic Multigrid with Filtering: An Efficient Preconditioner for Interior Point Methods in Large-Scale Contact Mechanics Optimization
\thanks{Submitted to the editors October 13, 2025.}
\funding{This work was performed under the auspices of the U.S. Department of Energy by Lawrence Livermore National Laboratory under contract DE–AC52–07NA27344 (LLNL–JRNL–2006438). The research was supported by the LLNL-LDRD Program under project 23–ERD–017.}}
\author{
  Socratis Petrides\thanks{\protect Lawrence Livermore National Laboratory, Livermore, CA, 94551, USA
  (\email{petrides1@llnl.gov}, \email{hartland1@llnl.gov}, \email{kolev1@llnl.gov}, \email{lee1029@llnl.gov},
    \email{puso1@llnl.gov},
   \email{solberg2@llnl.gov}, \email{chin23@llnl.gov}, \email{wang125@llnl.gov}, \email{petra1@llnl.gov})}
   \and Tucker Hartland\footnotemark[2]
  \and Tzanio Kolev\footnotemark[2]
  \and Chak Shing Lee\footnotemark[2]
\and Michael Puso\footnotemark[2]
\and Jerome Solberg\footnotemark[2]
\and Eric B. Chin\footnotemark[2]
\and Jingyi Wang\footnotemark[2]
\and Cosmin Petra\footnotemark[2]
}
\newcounter{todocounter}
\newif\ifshowrevised
  \colorlet{revised}{darkblue}
  \colorlet{revised}{.} 
\begin{document}

\maketitle
\begin{abstract}
Large-scale contact mechanics simulations are crucial in many engineering fields such as structural design and manufacturing. In the frictionless case, contact can be modeled by minimizing an energy functional; however, these problems are often nonlinear, nonconvex, and increasingly difficult to solve as mesh resolution increases. In this work, we employ a Newton-based interior-point (IP) filter line-search method, an effective approach for large-scale constrained optimization. While this method converges rapidly, each iteration requires solving a large saddle-point linear system that becomes ill-conditioned as the optimization process converges, largely due to IP treatment of the contact constraints.
Such ill-conditioning can hinder solver scalability and increase iteration counts with mesh refinement. To address this, we introduce a novel preconditioner, algebraic multigrid with filtering (AMGF), tailored to the Schur complement of the saddle-point system. Building on the classical AMG solver, commonly used for elasticity, we augment it with a specialized subspace correction that filters near null space components introduced by contact interface constraints. Through theoretical analysis and numerical experiments on a range of linear and nonlinear contact problems, we demonstrate that the proposed solver achieves mesh independent convergence and maintains robustness against the ill-conditioning that notoriously plagues IP methods.
These results indicate that AMGF makes contact mechanics simulations more tractable and broadens the applicability of Newton-based IP methods in challenging engineering scenarios. More broadly, AMGF is well suited for problems, optimization or otherwise, where solver performance is limited by a low-dimensional subspace, such as those arising from localized constraints, interface conditions or model heterogeneities. This makes the method widely applicable beyond contact mechanics and constrained optimization.
\end{abstract}

\begin{keywords}
contact mechanics, preconditioning, interior-point methods, algebraic multigrid
\end{keywords}

\begin{MSCcodes}
65F08, 65N55, 65N30, 74S05, 74M15, 90C51 
\end{MSCcodes}

\section{Introduction}
Contact between multiple bodies occurs frequently in scientific and engineering applications and can significantly influence the performance of many critical engineering systems such as part assemblies, metal forming, and crash-worthiness. Despite its importance, contact introduces complex constraints that pose major challenges for large-scale simulations and optimization workflows \cite{wang2024designoptimizationunilateralcontact}. In this paper, we introduce a novel preconditioner, algebraic multigrid (AMG) with filtering (AMGF), specifically designed to address the ill-conditioning associated with contact interface constraints. Integrated within a Newton-based interior-point (IP) framework, AMGF explicitly filters undesirable modes that otherwise hinder solver performance. We establish theoretical bounds for the preconditioner and demonstrate that the IP-based approach equipped with AMGF achieves robust, scalable performance and efficiently handles challenging large-scale contact mechanics problems.

Numerical simulations of systems that include contact are significantly more challenging than those that neglect it. The additional numerical challenges stem from the associated nonlinearities due to the unilateral, nonsmooth nature of gaps opening and closing, sliding, friction, rigid body motions, and the linear system solution for implicitly time integrated approaches. Numerical solutions of contact problems are often computed by using explicitly time integrated finite element methods (FEM) \cite{wriggers2006computational,laursen1992formulation}, particularly for transient problems such as crash-worthiness analyses whose time regimes are on the order of milliseconds. This avoids nonlinear Newton type schemes with associated Newton linear system solves required in an implicit FEM scheme. However, long time events such as thermostructural analysis of mechanical systems, geomechanical problems, metal forming, etc., require implicit schemes. In the absence of contact, nonlinear solid mechanics problems lead to linear systems whose condition number scales as $1/h^2$ where $h$ represents a measure of the typical element size. The introduction of contact can significantly worsen the system conditioning, leading most simulations to rely on direct solvers. However, the excessive computational cost and memory requirements of direct solvers in the large-scale regime limits the often needed numerical fidelity achieved through mesh refinement.
The most commonly used schemes to enforce the complementary conditions associated with contact in FEM-based simulations include penalty \cite{hallquist1985},
augmented Lagrangian \cite{laursen1992formulation}, and active set approaches \cite{huebwohl05}. Penalty methods often require large penalty parameters to get sufficiently accurate results, which makes solving the associated nonlinear systems challenging \cite{nocedalwright2006}. The augmented Lagrangian method, while more robust, tends to converge slowly. Active set approaches involving Lagrange multipliers are also difficult to solve in general due to the indefinite nature of the resulting Schur complement systems. Alternative approaches, such as IPC \cite{10.1145/3386569.3392425,10.1145/3657648}, handle contact through barrier potentials embedded directly in the energy, offering robustness and differentiability. These methods follow a different modeling paradigm than the IP formulation adopted here.
In this paper, we employ a Newton-based IP method as the foundation of our solver framework. The IP method has gained significant popularity in the optimization community for solving large-scale nonconvex optimization problems and, when appropriately designed \cite{petra2023, hartland2024scalableinteriorpointgaussnewtonmethod}, can provide a mesh independent means to solve the associated optimization problem. While IP methods have been explored for contact mechanics problems dating back to \cite{chrisklarb1998}, recent advances in IP techniques motivate their renewed application to challenging large-scale simulations. 

Robust iterative linear solvers are needed to replace the often used direct solvers in order to achieve a fully scalable computational method. Krylov-subspace methods help alleviate the computational burden of solving these linear systems but require a large number of iterations unless an effective preconditioner is available. AMG preconditioners are particularly effective for solving linear systems which arise from the discretization of elliptic partial differential equations, such as elasticity which is a principal component of computational contact mechanics linear systems. While AMG preconditioners have been used for active set approaches with indefinite matrices ~\cite{wiesner2021,frances2022}, their application to IP and related continuation methods in contact mechanics remains unexplored. It is worth noting that despite the robustness of IP methods, they introduce inherent sources of ill-conditioning into the associated IP-Newton linear systems, making preconditioner design more challenging.

The proposed preconditioner, AMGF, is designed to handle scenarios where standard solvers perform well except within certain small subspaces. AMGF can be viewed as an AMG solver augmented with an additional filtering step, formulated as a subspace correction. In the context of contact mechanics, the filtered subspace corresponds to degrees of freedom associated with the contact interface, which typically remains sufficiently small relative to the full problem size as the ratio scales as surface area to volume. Although our focus is on contact problems, the AMGF methodology is broadly applicable to any situation where solver performance deteriorates due to small problematic subspaces. The theoretical foundation of AMGF draws on classical subspace correction theory \cite{xu1992,xu2001,xu2002}. AMGF shares similarities with multigrid methods \cite{adams2004,wiesner2021,falgout2005} that combine smoothing with coarse-grid corrections. Here AMG serves as the smoother and the additional subspace correction is applied to address the challenging near-null space modes. Similarities can also be drawn to deflation techniques~\cite{deflation1998,deflation2000}, which accelerate convergence by separately treating problematic eigenmodes. A key contribution of this paper is a theoretical result establishing that if a preconditioner \(\sfB\) (e.g., AMG) of a symmetric positive definite (SPD) matrix \(\sfA\) yields a condition number \(\kappa_{\sfB\sfA}\) for the linear system excluding the problematic subspace, then the preconditioner \(\sfB\) with filtering (e.g., AMGF) ensures that the condition number of the system on the whole space is approximately twice as large as \(\kappa_{\sfB\sfA}\). Numerical experiments confirm our theoretical estimates and demonstrate that the IP-based approach, combined with the AMGF preconditioner, can efficiently handle large-scale contact mechanics problems.

The remainder of this paper is structured as follows: \Cref{section:problem_formulation} presents the FEM formulation of the contact problem. \Cref{sec:interior_point_method} outlines the IP method used to solve the optimization problem. \Cref{section:Model_problem} introduces a contact example, highlighting the need for an effective preconditioner. \Cref{section:precon} details the proposed AMGF preconditioner, accompanied by condition number estimates. \Cref{section:results} evaluates its performance through numerical experiments {\color{revised} and \cref{section:cost} discusses the computational cost. Finally, \cref{section:conclusion} provides a synopsis and outlines directions for future work.}

\section{Contact problem formulation and constraints}
\label{section:problem_formulation}
In computational contact mechanics, interactions between multiple deformable bodies are modeled within the FEM framework. The main challenge lies in ensuring that these bodies do not penetrate each other while maintaining physical accuracy. This requires formulating appropriate contact constraints and incorporating them into the governing equations. In this section, we first describe the mathematical enforcement of the contact constraints, followed by the FEM problem formulation.
\subsection{Contact constraints and enforcement}
In contact mechanics, the contact gap $g$ is a measure of the distance between points on the surfaces $\partial \Omega_1$ and $\partial \Omega_2$, as shown in
~\cref{fig:contact_diagram}. The condition $g = 0$ defines the contact surface $\Gamma^c$, where interaction occurs between the bodies. The gap can be formulated using different approaches, among which the node-on-segment and mortar segment-to-segment methods are commonly used. The node-on-segment collocation approach (\cref{fig:gap1}) defines the gap as the closest point distance from nodes on one body to segments on the other \cite{zavarise2009}. While being easier to implement, this method is prone to inaccuracies in coarse meshes or large deformations. Alternatively, the mortar segment-to-segment approach \cite{belgaetal98,pusolaur04a} (\cref{fig:gap2}), has become popular due to its robustness and accuracy. The mortar method enforces the contact condition weakly by integrating the gap constraint over the contact surface. The discretized gap constraint is expressed as
\begin{equation}
    g_A = \int_{\Gamma^c_A} \varphi^1_A \, \left(\bm{x}^1_h - \bm{x}^2_h\right) \cdot \bm{n}_A \, d\Gamma, 
    \quad g(\bm{x}) = \left\{g_1,g_2\,\ldots,g_n\right\}^{\top} \ge 0
    \label{eq:mortargap}
\end{equation}
Here, $\bm{x}^1_h$ and $\bm{x}^2_h$ denote the discrete representation of the deformed coordinate positions along the contacting surfaces, $\bm{n}_A$ is the segment averaged normal at node $A$ used to compute the contact distances and $\varphi^1_A$ is the finite element shape function associated with the node $A$ on side $1$ providing a weighted value of the gap. The gap constraint $g_A$ is enforced everywhere along $\partial \Omega_1 \cup \partial \Omega_2$, leading to the set of constraint equations in ~\cref{eq:mortargap}.
It should be noted that the constraints from \cref{eq:mortargap} are nonlinear. Here, a small incremental slip version is applied by fixing the contact area $\Gamma^c_A$ and normal $\bm{n}_A$ at the beginning of each time step to recover linearity. This doesn't preclude large sliding but does require more time steps, the more general approach is beyond the scope of this work.
\begin{figure}[H]
    \centering
    \includegraphics[width=0.5\linewidth]{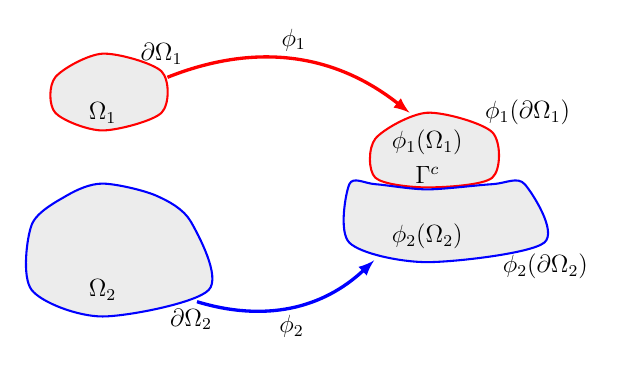}
    \caption{Two-body contact problem. Deformation mappings $\phi_1$ and $\phi_2$
     take undeformed states to contacting states with contact surface \(\Gamma^c := \phi_1(\partial \Omega_1) \cap \phi_2(\partial\Omega_2)\).}
    \label{fig:contact_diagram}
\end{figure}
\begin{figure}[H]
\setlength{\abovecaptionskip}{-2pt} %
    \begin{subfigure}[t]{0.45\textwidth}
    \centering
    \includegraphics[width=0.8\linewidth]{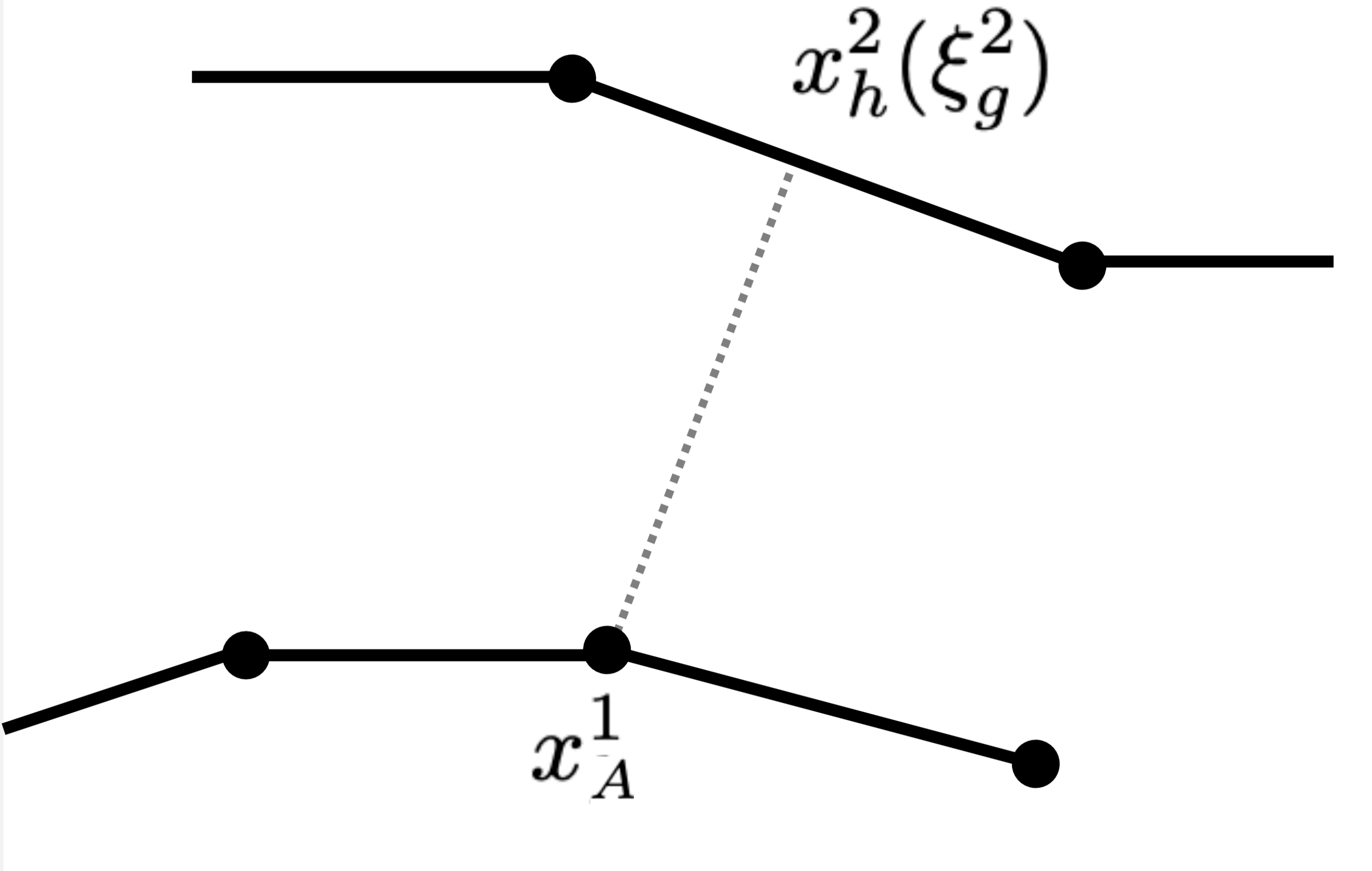}
    \caption{Node-on-surface gap calculation where side $1$ node $A$ is projected
    against side $2$ surface using closest-point.}
    \label{fig:gap1}
    \end{subfigure}\hfill
        \begin{subfigure}[t]{0.45\textwidth}
    \centering
    \includegraphics[width=0.8\linewidth]{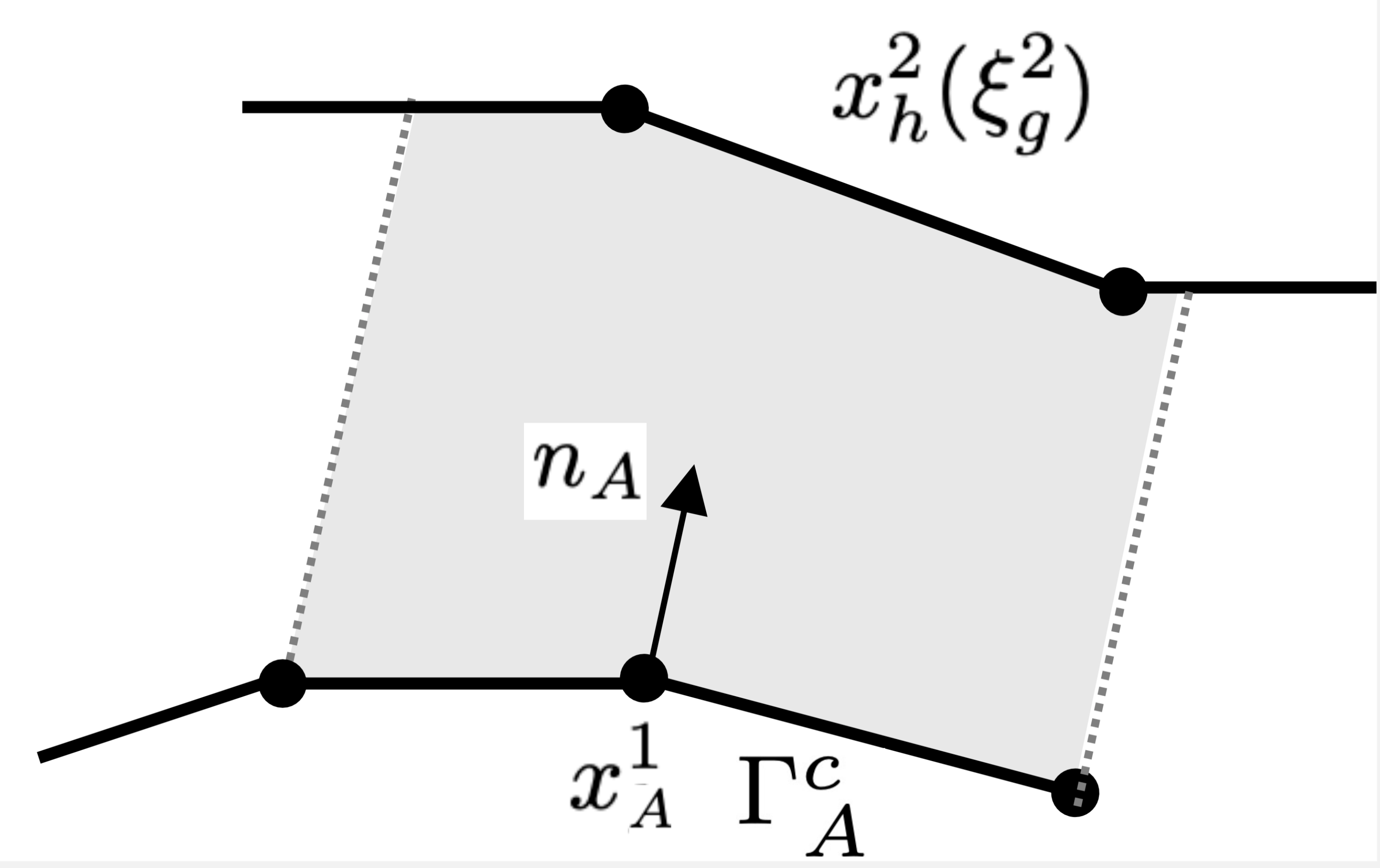}
    \caption{Mortar gap weighted volume approach.
    Here, $\Gamma^c_A$ is the compact domain of the shape function $\varphi^1_A$ and defines the limits of integration
    in \cref{eq:mortargap}.}
    \label{fig:gap2}
    \end{subfigure}
    \caption{Different gap definitions}
    \label{fig:gap_def}
\end{figure}
\subsection{Finite element formulation of the contact problem}
With the contact constraints defined, the problem is now formulated within the finite element framework. We consider the problem of \(N\) linear\footnote{Linear elasticity is used in presentation but extension to nonlinear elasticity poses no restriction.} elastic bodies in contact as illustrated in~\cref{fig:contact_diagram}. The spatial domain occupied by these bodies is defined as the union of their respective subdomains i.e., \(\Omega := \bigcup_{k=1}^N \Omega_k\). The governing equations are formulated in the function space \(\bm{U} := \prod_{k=1}^N \bm{H}^1\left(\Omega_k\right)\) where \(\bm{H}^1\left(\Omega_k\right)\) is the Sobolev space of vector-valued functions:
\begin{equation*}
\begin{aligned}
\bm{H}^1\left(\Omega_k\right) &= \left\{\bm{v}:=(v_1, v_2, v_3):\Omega_k \rightarrow \R^3 |\, v_i \in H^1\left(\Omega_k\right), i=1,2,3\right\}. \\
\end{aligned}
\end{equation*}
The objective is to solve the quasi-static equilibrium problem while enforcing a \emph{nonpenetration} condition. Specifically, we seek the optimal solution to the following constrained minimization problem:
\begin{equation*}
\begin{aligned}
\min_{\bm{u} \in \bm{U}}
\mathcal{E}(\bm{u}):=& \sum_{k=1}^{N}\int_{\Omega_{k}} \left(\frac{1}{2} \bm{\sigma}_{k}(\bm{u}_{k}) : \bm{\epsilon}(\bm{u}_{k}) - \bm{f}_{k}   \cdot \bm{u}_{k} \right) \mathrm{d}\Omega_{k} \\
\text{s.t. }\, g(\bm{u}) \ge&\, 0.
\end{aligned}
\end{equation*}
Here, \(\bm{u}\) represents the displacement field and \(\mathcal{E}(\bm{u})\) is the total potential energy functional. \(\bm{\sigma}_{k}(\bm{u}_{k}):=\lambda_{k} \left(\nabla \cdot \bm{u}_{k}\right) \bm{I} + 2 \mu_{k}\bm{\epsilon}(\bm{u}_{k})\) is the stress tensor of the \(k^{\text{th}}\) body and \(\lambda_{k}\) and \(\mu_{k}\) are the corresponding Lam\'{e} parameters. The small strain tensor is defined as \(\bm{\epsilon}(\bm{u}_{k}) = \frac{1}{2} \left(\nabla \bm{u}_{k} + \nabla \bm{u}_{k}^\top\right)\) and \(\bm{f}_{k} = \bm{f}_{k}(\bm{x},t)\) represents an external body force. The function \(g(\bm{u})\) characterizes the contact gap between the bodies.
Let \(\left\{\varphi_i\right\}_{i=1}^{n}\) denote the basis for the discrete subspace \(\bm{U}_h \subset \bm{U}\).
We then seek the discrete solution \(\bm{u}_h = \sum_{i=1}^{n} \sfu_i \varphi_i\) or equivalently the set of coefficients
\(\sfu = \left[\sfu_i\right]_{i=1}^{n}\) that satisfy the discrete minimization problem
\begin{equation}
\label{eq:linelastmin_discrete}
\begin{aligned}
\sfu = \arg \min_{\sfw\in \R^n} \,\, &\sfE(\sfw):=\left(\frac{1}{2} \sfw^\top \sfK \sfw - \sfw^\top \sff\right) \\
\text{s.t. } \,\, & \sfJ_{\text{\tiny ref}}\left(\sfw - \sfu_{\text{\tiny ref}}\right) + \sfg_{\text{\tiny ref}} \ge 0.
\end{aligned}
\end{equation}
Here, \(\sfK\) is an \(N \times N\) block diagonal matrix, with its blocks representing the linear elasticity matrix on each of the bodies in contact. The vector \(\sff\) represents contributions from external body and surface forces, as well as internal stresses present in the configuration about which the problem is linearized when approximating a nonlinear system through successive linear steps. We linearize the gap function around some reference displacement field represented on the discrete level by the vector \(\sfu_{\text{\tiny ref}}\). For nonlinear material models, we take $\sfE$ to be a second order Taylor series approximation of the elastic energy{\color{revised}, and $\sfK$ represents  the tangent stiffness matrix evaluated at the reference configuration \(\sfu_{\text{\tiny ref}}\)}. The vector \(\sfg_{\text{\tiny ref}}\) and the matrix \(\sfJ_{\text{\tiny ref}}\) represent the discrete approximations of the gap function and its Jacobian, respectively, evaluated at the reference displacement field. With the contact constraints incorporated into the FEM formulation, solving the resulting constrained system efficiently becomes a key challenge. To address this, we employ an IP method, which is well-suited for handling large-scale constrained optimization problems. In the next section, we provide an outline of the IP approach and its application to contact mechanics.

\section{Interior-point method}
\label{sec:interior_point_method}

In this section, we provide background on the application of the IP method to inequality-constrained optimization problems, such as the targeted frictionless steady-state and quasi-static contact mechanics problems. In particular, we employ a filter line-search IP method that is one of the most robust methods for nonlinear nonconvex optimization. This method possesses best-in-class global and local convergence properties \cite{wachter2005, wachter_05_ipopt2}. In general, we consider
\begin{equation}
\label{eq:optproblem1}
\begin{aligned}
        \min_{\sf{u}\in\mathbb{R}^{n}}&\,{\sfE}(\sf{u}),  \\
        \text{s.t. }& \sf{g}(\sf{u})\geq \sf{0},
\end{aligned}
\end{equation}
defined by the objective \(\sfE:\R^n\rightarrow\R\) and constraint \(\sfg:\R^n\rightarrow\R^m\) functions\footnote{The following presentation follows the incremental slip mentioned in \Cref{section:problem_formulation}, thus coordinates \(x\) can be replaced by
displacements \(u\) as in \cref{eq:mortargap}}.
We transform the inequality-constrained problem into an equality-constrained one by introducing \emph{slack} variables \(\sfs\in \R^m\). Then \cref{eq:optproblem1} is reformulated as:
\begin{equation*}
\begin{aligned}
\min_{\sfu\in\R^n,\, \sfs\in\R^m} \,\, & \sfE(\sfu), \\
\text{s.t. }              & \sfc(\sfu,\sfs)=\sf0, \,\, \text{and } \, \sfs\geq \sf0,
\end{aligned}
\end{equation*}
where \(\sfc(\sfu,\sfs):=\sfg(\sfu)-\sfs\). Generally, an IP method involves solving a sequence of equality constrained optimization problems
\begin{equation}
\label{eq:logbaroptproblem}
\begin{aligned}
\min_{\sfu\in\mathbb{R}^{n},\,\sfs\in\mathbb{R}^{m}}\,\,\varphi(\sfu,\sfs)&:=\sfE(\sfu)-\mu\sum_{i=1}^{m}(\sfM_{\sfs})_{i,i}\log\left(\sfs_{i}\right), \\
\text{s.t. }\sfc(\sfu,\sfs)&\phantom{:}=\sf0.
\end{aligned}
\end{equation}
The \emph{log-barrier parameter} \(\mu>0\) controls the trade-off between feasibility and optimality, forcing the iterates to remain strictly within the feasible region. As \(\mu\rightarrow 0^{+}\), the solution of the log-barrier subproblem converges to the true optimal solution of the constrained problem described by \cref{eq:optproblem1}. The diagonal lumped mass-matrix $\sfM_{\sfs}$ associated to $\sfs$ is introduced as in \cite{petra2023, hartland2024scalableinteriorpointgaussnewtonmethod} so that optimization problems that arise from discretization are well defined upon mesh refinement.
The Lagrangian formalism is then followed wherein a Lagrangian $\mathcal{L}$, with respect to the log-barrier subproblem, is formed with Lagrange multiplier \(\lambda\in\R^m\)
\begin{equation*}
        \mathcal{L}(\sfu,\sfs,\sf\lambda):=
        \varphi(\sfu,\sfs)+\sf\lambda^{\top}\sfc(\sfu,\sfs),
\end{equation*}
in order to express the first-order optimality conditions \cite{nocedalwright2006}
\begin{equation*}
\begin{aligned}
\nabla_{\sfu}\mathcal{L}(\sfu,\sfs,\sf\lambda)=\nabla_{\sfu}\sfE(\sfu)+
\sf{J}^{\top}\sf\lambda&=\sf0
,\\
\nabla_{\sfs}\mathcal{L}(\sfu,\sfs,\sf\lambda)=-\mu\,\sfM_{\sfs}\sf1/\sfs-\sf\lambda&=\sf0,\\
\sfc(\sfu,\sfs)&=\sf0,
\end{aligned}
\end{equation*}
where \(\sf1/\sfs\) denotes the element-wise division of a vector of ones by the slack vector \(\sfs\), and \(\sfJ=\partial\sfc/\partial\sfu\). We then formally introduce the dual variable $\sfz:=\mu\sf1/\sfs,$
 to form the nonlinear system of equations
\begin{equation}
\label{eq:optConditionsPrimalDual}
\begin{aligned}
        \sfr_{\sfu}=\nabla_{\sfu}\sfE(\sfu)+
        \sfJ^{\top}\sf\lambda&=\sf0
        ,\\
        \sfr_{\sfs}=-\sf\lambda-\sfM_{\sfs}\sfz&=\sf0,\\
        \sfr_{\sf\lambda}=\sfc(\sfu,\sfs)&=\sf0,\\
        \sfr_{\sfz}(\mu)=\text{diag}(\sfz)\,\sfs-\mu\cdot\sf1&=\sf0.
\end{aligned}
\end{equation}
Here, $\sfr_{\sfu},\sfr_{\sfs},\sfr_{\sf\lambda}$, and $\sfr_{\sfz}(\mu)$ are nonlinear residuals. Solving the IP system requires an iterative update of the primal and dual variables. Linearizing the first-order optimality conditions \cref{eq:optConditionsPrimalDual} yields the following Newton system, which determines the search direction at each iteration.
\begin{equation}
\label{eq:4x4Newton}
\begin{bmatrix}
        \sfK & \sf0 & \sfJ^{\top} & \sf0 \\
        \sf0 & \sf0 & -\sfI & -\sfM_{\sfs} \\
        \sfJ & -\sfI & \sf0 & \sf0 \\
        \sf0 & \text{diag}(\sfz) & \sf0 & \text{diag}(\sfs)
\end{bmatrix}
\begin{bmatrix}
        \hat{\sfu} \\
        \hat{\sfs} \\
        \hat{\sf\lambda} \\
        \hat{\sfz}
\end{bmatrix}
=
\begin{bmatrix}
        -\sfr_{\sfu} \\
        -\sfr_{\sfs} \\
        -\sfr_{\sf\lambda} \\
        -\sfr_{\sfz}
\end{bmatrix},
\end{equation}
where \(\sfK:=\partial \sfr_{\sfu}/\partial\sfu\) and whose solution \((\hat{\sfu},\hat{\sfs},\hat{\sf\lambda},\hat{\sfz})\) is the so-called Newton search direction. The optimizer estimate is then updated by 
\begin{equation} \label{eq:linesearchupdate}
\begin{aligned}
\begin{bmatrix}
        \sfu^{+} & \sfs^{+} & \sf\lambda^{+}
\end{bmatrix}&=
\begin{bmatrix}
        \sfu & \sfs & \sf\lambda
\end{bmatrix}
+\alpha_{p}
\begin{bmatrix}
        \hat{\sfu} & \hat{\sfs} & \hat{\sf\lambda}
\end{bmatrix},\\
\sfz^{+}&=\sfz+\alpha_d\,\hat{\sfz},
\end{aligned}
\end{equation}
for strictly positive primal and dual step-lengths $\alpha_{p}$ and $\alpha_{d}$ determined by the globalizing filter line-search algorithm \cite{wachter2006}. As is common, we symmetrize \cref{eq:4x4Newton} by algebraically eliminating $\hat{\sfz}$ thereby forming the linear system
\begin{equation*}
\begin{bmatrix}
        \sfK & \sf0 & \sfJ^{\top} \\
        \sf0 & \sfD & -\sfI \\
        \sfJ & -\sfI & \sf0
\end{bmatrix}
\begin{bmatrix}
        \hat{\sfu}\\
        \hat{\sfs} \\
        \hat{\sf\lambda}
\end{bmatrix}
=
\begin{bmatrix}
        \sfb_{\sfu} \\
        \sfb_{\sfs} \\
        \sfb_{\sf\lambda}
\end{bmatrix}:=
\begin{bmatrix}
        -\sfr_{\sfu} \\
        -\left(\sfr_{\sfs}+\sfM_{\sfs}\diag(\sfs)^{-1}\sfr_{\sfz}\right)\\
        -\sfr_{\sf\lambda}
\end{bmatrix},
\end{equation*}
where \(\sfD:=\diag\left(\sfM_{\sfs}\sf z / \sf s\right)\).
We can further reduce the linear system to an equation in \(\hat{\sfu}\) by static condensation of \(\hat{\sfs}\) and \(\hat{\sf\lambda}\):
\begin{equation}
\label{eq:linsystem1}
\sfA \hat{\sfu}=\sfb,
\end{equation}
where \(\sfA:=\sfK+\sfJ^{\top}\sfD\sfJ\), and \(\sfb:=\sfb_{\sfu}+\sfJ^{\top}\left(\sfD\sfb_{\sf\lambda}+\sfb_{\sfs}\right)\).

\begin{algorithm2e}
        \SetAlgoNoLine
        \SetKwInOut{Input}{Input}
        \SetKwInOut{Output}{Output}
        \SetKwRepeat{Do}{do}{while}
        {
                \Input{Optimizer tolerance $\text{tol}_{\text{\tiny IP}}>0$,  initial point $\sf{u}$}
                \Output{Approximate optimizer $\sf{u}^{\star}$}

                Initialize barrier parameter $\mu>0$ and other algorithm parameters\;
                \Repeat{$\operatorname{e}_{\operatorname{opt}}<\operatorname{tol}_{\operatorname{IP}}$}
                {
            \Repeat{$\operatorname{e}_{\operatorname{opt}}^{(\mu)}<\operatorname{tol}_{\operatorname{IP}}^{(\mu)}$}
            {
            $\sf{A}\sf{\hat{u}}=\sf{b}$ 

            Recover $\hat{\sf{s}},\hat{\sf{\lambda}}, \hat{\sf{z}}$ from $\hat{\sf{u}}$ 

            Update $\sf{u}$, $\sf{s}$, $\sf{\lambda}$, and $\sf{z}$ via a line search 
        }

            Decrease $\mu$;
        }
        }
        \caption{IP-Newton optimization algorithm outline}
        \label{alg:outerloop}
\end{algorithm2e}
The main components of the optimization are summarized in Algorithm \ref{alg:outerloop}. The optimality errors $\operatorname{e}_{\operatorname{opt}}$ and $\operatorname{e}_{\operatorname{opt}}^{(\mu)}$ for the optimization problem \cref{eq:optproblem1} and log-barrier subproblem \cref{eq:logbaroptproblem} in Algorithm \ref{alg:outerloop} are defined as
\begin{align*}
\text{e}_{\text{\tiny opt}} &:= \max\left\{
\|\sf{r}_{\sf{u}}\|_{\sfM_{\sfu}^{-1}}, \|\sf{r}_{\sf{s}}\|_{\sfM_{\sfs}^{-1}} ,\|\sf{r}_{\sf{\lambda}}\|_{\infty},\|\sf{r}_{\sf{z}}(0)\|_{\infty} \right\}, \\
\text{e}_{\text{\tiny opt}}^{(\mu)} &:= \max\left\{
 \|\sf{r}_{\sf{u}}\|_{\sfM_{\sfu}^{-1}}, \|\sf{r}_{\sf{s}}\|_{\sfM_{\sfs}^{-1}},\|\sf{r}_{\sf{\lambda}}\|_{\infty},\|\sf{r}_{\sf{z}}(\mu)\|_{\infty} \right\}.
\end{align*}
 The tolerance of the log-barrier subproblem is $\text{tol}_{\text{\tiny IP}}^{(\mu)}=\kappa_{\epsilon}\mu$ and ensures that the subproblems are solved more accurately as the solution of \cref{eq:optproblem1} is approached. The values of the thus far unspecified constants are $\kappa_{\epsilon}=10$ and $s_{\text{max}}=100$. We utilize an inertia-free inertia regularization scheme for nonconvex problems as in \cite{chiang2016} in order that the search direction is of descent when the constraints are nearly satisfied.
\begin{remark}
\label{remark:boundconstr}
In certain cases, such as potentially nonconvex trust-region quadratic programming (QP) subproblems \cite{conn2000}, additional bound constraints are required to ensure that problem \cref{eq:optproblem1} is well defined. For instance, for problems with nonlinear elasticity models, such as the last example in \Cref{section:results}, additional bounds are placed on the optimization variable \(\sfu\), i.e.,
\(\underline{\delta}\leq \sf{u}-\sfu_{\star}\leq \overline{\delta}\), with 
\(\underline{\delta}\) and \(\overline{\delta} \) the upper and lower bounds from a reference state \(\sfu_{\star}\), respectively. For their enforcement, additional slack variables (\(\underline{\sfs}, \overline{\sfs}\)) are introduced
and therefore the constraint function in \cref{eq:logbaroptproblem} becomes:
\[\sfc(\sfu,\sfs)=\left\{ \sfg(\sfu)-\sfs_{\text{\tiny{gap}}},(\sfu-\sfu_{\star})-\underline{\delta}-\underline{\sfs},\overline{\delta}-(\sfu-\sfu_{\star})-\overline{\sfs}\right\}^{\top}.\]
Furthermore, in this context the reduced IP-Newton system matrix transforms to
\[\tilde{\sfA}:=\sfK+\tilde{\sfJ}^{\top}\tilde{\sfD}\tilde{\sfJ},
\text{ with }
\tilde{\sfJ}:=
\begin{bmatrix}
        \sfJ \\
        \sfI \\
        -\sfI
\end{bmatrix}
\text{ and }
\tilde{\sfD}:=
\begin{bmatrix}
        \sfD &  & \\
        & \underline{\sfD} & \\
        & & \overline{\sfD}
\end{bmatrix}.
\]
\end{remark}

The \emph{log-barrier Hessian}, \(\sfD\), is a positive definite diagonal matrix that becomes increasingly ill-conditioned as the IP method converges toward a local minimizer. This deterioration in conditioning makes solving the IP-Newton linear system more challenging and complicates the design of effective preconditioners for Krylov-subspace solvers. While AMG is effective for elasticity problems, it struggles with contact constraints in the IP setting, as shown in \Cref{section:Model_problem}. This motivates the specialized AMGF preconditioner introduced and analyzed in \Cref{section:precon}.

\section{Linear elasticity model problem} 
\label{section:Model_problem}

We consider a simple model problem consisting of two linear elastic bodies in contact: a small cubic block resting on a larger rectangular block (see \cref{fig:testNo4_diagram}). The larger block is subjected to a homogeneous Dirichlet boundary condition (BC) on its bottom face, while the smaller block has a nonhomogeneous Dirichlet BC at its top boundary surface \(\Gamma_1\) given by \(u_{\Gamma_1} := (0,0,-\frac{5}{7})\). In this setup, the top face of the large block acts as the mortar surface, the reference side for enforcing contact, while the bottom face of the smaller block is the nonmortar surface, whose motion is constrained relative to the mortar. All other boundary surfaces are traction-free and no additional external body forces are applied. The Lam\'{e} parameters for the two bodies correspond to the Young moduli  \(E_{\text{small-block}} = 1000 , E_{\text{large-block}} = 1\) and the Poisson's ratios \(\nu_{\text{small-block}} = 0, \nu_{\text{large-block}}=0.499\). To solve the problem, we use hexahedral meshes and discretize using first order Lagrange elements.
\begin{figure}[H]
    \begin{subfigure}[b]{0.53\textwidth}
    \centering
    \includegraphics[width=1.00\linewidth]{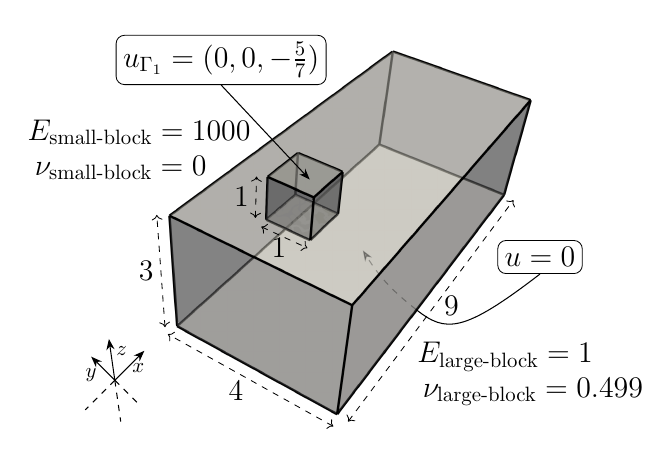}
    \caption{Initial configuration}
    \label{fig:testNo4_diagram}
    \end{subfigure}
    \begin{subfigure}[b]{0.46\textwidth}
        \centering
        \includegraphics[width=0.85\linewidth]{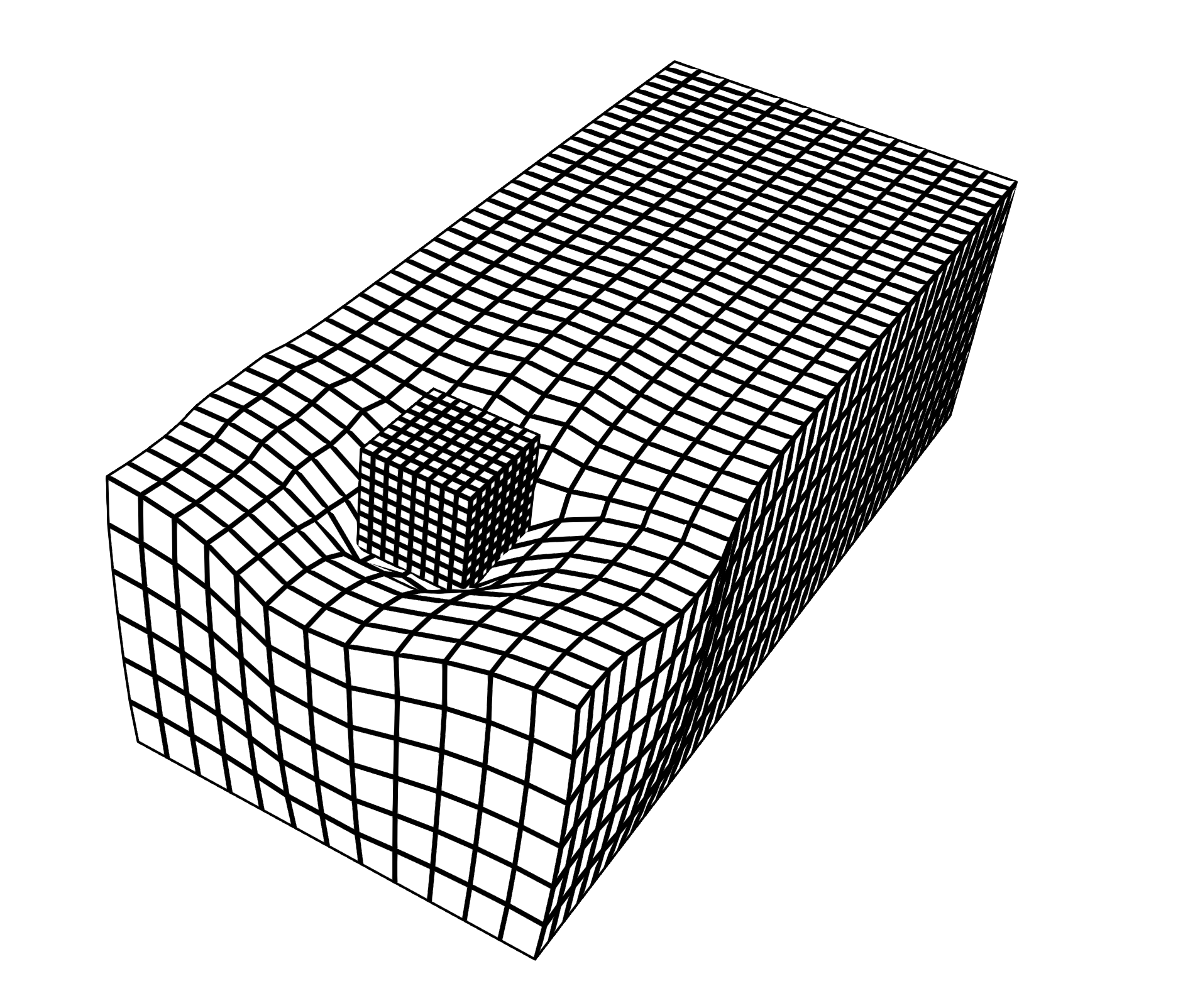}
    \caption{Final configuration for mesh level 1}
    \label{fig:testNo4_diagram-final}
    \end{subfigure}
    \caption{Two-block problem: a small cubic block is in contact with a larger rectangular block. The simulation is driven by a nonhomogeneous Dirichlet BC \(u = (0,0,-\frac{5}{7})\) enforced on the top face of the cubic block. The bottom face of the rectangular block is fixed at \(u=0\) and the rest of the boundary is traction free.}
\end{figure}
Additionally, we introduce a pseudo-time-stepping approach to improve accuracy of the gap function and its Jacobian. The nonhomogeneous Dirichlet BC is applied incrementally over multiple time steps. More specifically, given a time step {\color{revised} size} \(\delta t \) and final time \(T\), the BC at each time step \(t_i\) is defined as:
\begin{equation*}
u^{t_i}_{\Gamma_1} = \frac{i \delta t}{T} u_{\Gamma_1}, \quad i=1,2, \dots, m, \text{ with } \delta t = \frac{T}{m}.
\end{equation*}
At the time step \(t_i\) the minimization problem defined by \cref{eq:linelastmin_discrete} with BC value \(u^{t_i}_{\Gamma_1}\) is solved using the IP solver. The displacement solution is then used to generate an intermediate set of mesh nodes, which are utilized to compute an updated gap and its Jacobian. A new problem is subsequently formulated on the original mesh configuration, incorporating the next BC value and the updated gap and Jacobian.

\overfullrule=0pt
\begin{figure}[H]
    \centering
        \begin{subfigure}{0.4\textwidth}
        \centering
        \includegraphics[width=1.0\linewidth,trim=0 0 75 0, clip]{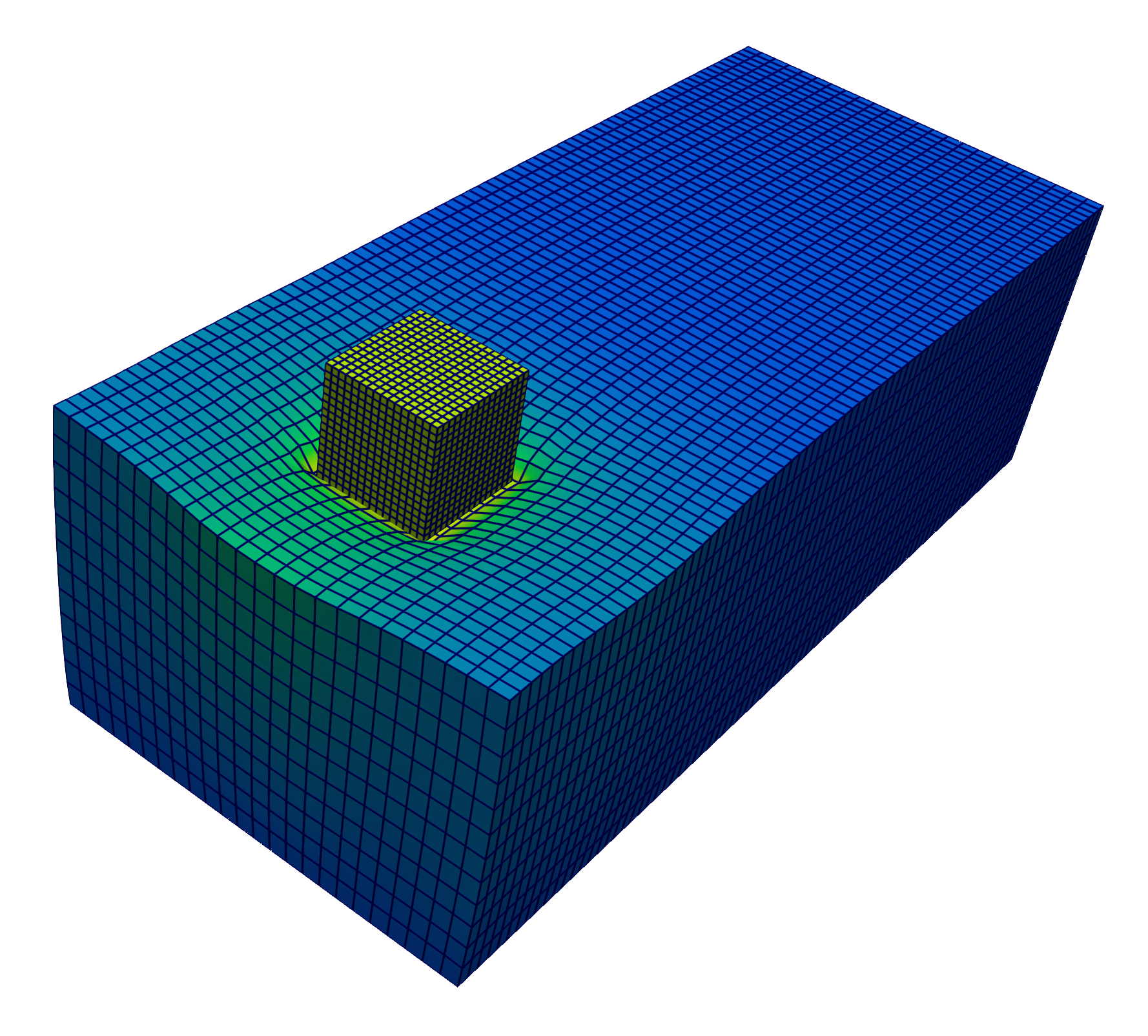}
    \end{subfigure}
        \begin{subfigure}{0.4\textwidth}
        \centering
        \includegraphics[width=1.0\linewidth, trim=0 0 75 0, clip]{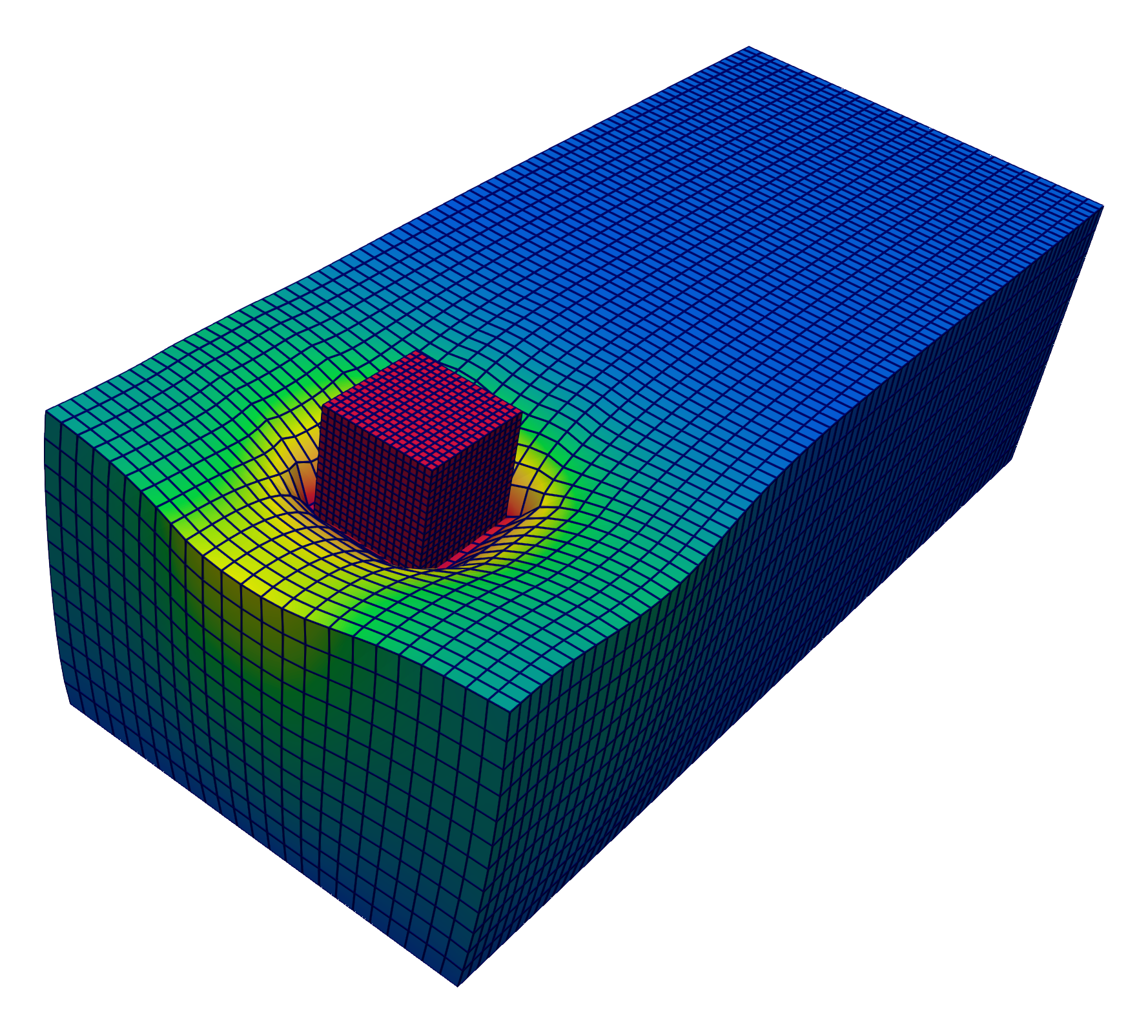}
    \end{subfigure}
    \begin{subfigure}{0.18\textwidth}
        \centering
        \includegraphics[width=1.4\linewidth,trim=150 0 0 0, clip]{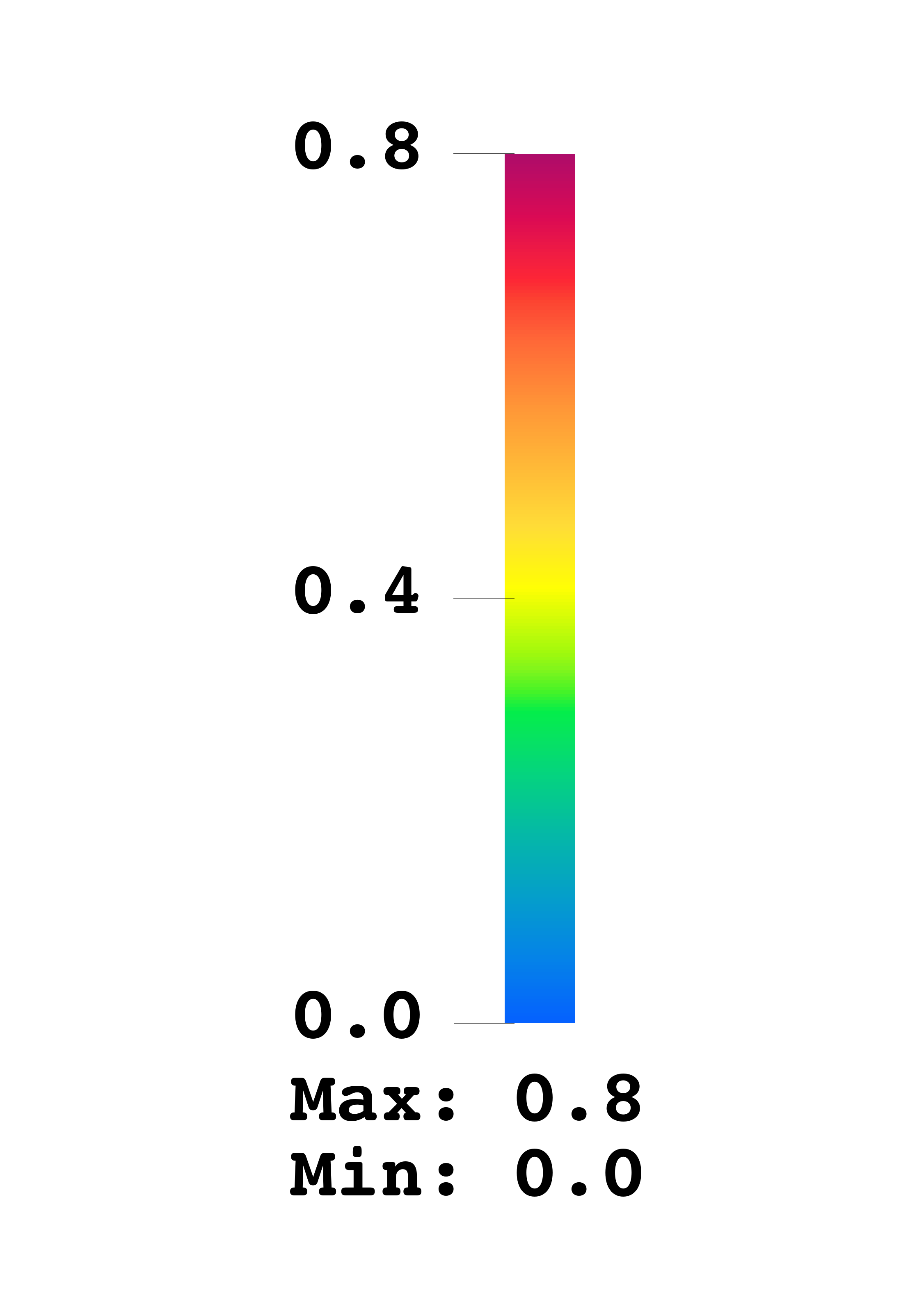}
    \end{subfigure}\hfill
    \caption{Deformed configurations and displacement magnitude at times steps \(t_1,t_2\).}
    \label{fig:ex4_deformation}
\end{figure}
\begin{figure}[H]
    \centering
    \begin{subfigure}[t]{0.49\textwidth}
        \centering
        \includegraphics[width=0.99\linewidth]{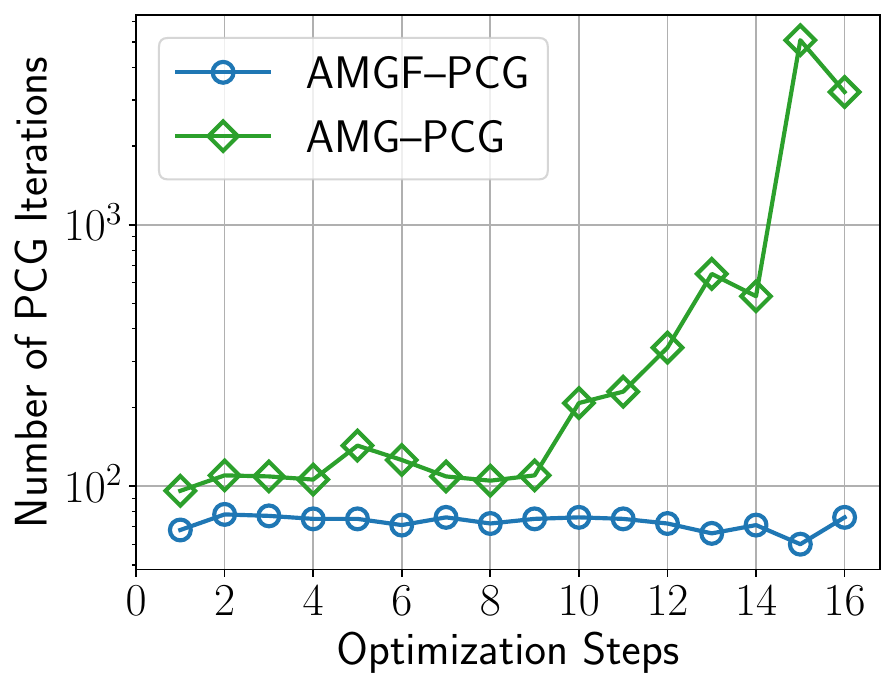}
    \caption{AMGF vs AMG convergence}
    \label{fig:amgvsamgf_subfig}
    \end{subfigure}\hfill
    \begin{subfigure}[t]{0.49\textwidth}
        \centering
        \includegraphics[width=0.99\linewidth]{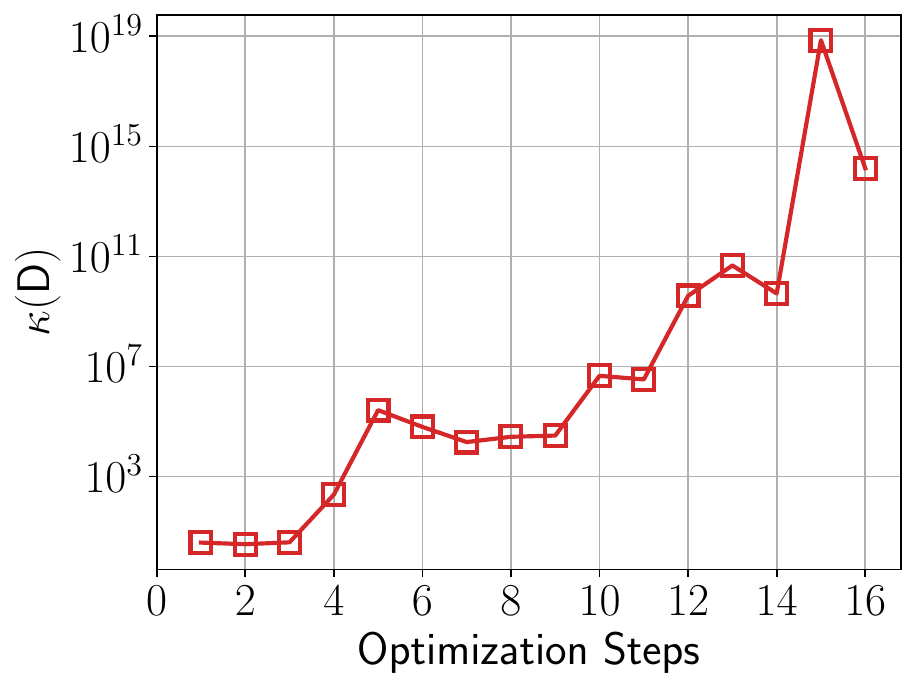}
        \caption{\(\sfD_{\max}/\sfD_{\min}\)}
        \label{fig:cond_d}
    \end{subfigure}\hfill
    \caption{Performance of AMG--PCG vs AMGF--PCG (left) within the IP optimization process for the first time step. The horizontal axis shows the number of IP solver iterations. The figure on the right shows the condition number of the diagonal matrix $\sfD$.}
    \label{fig:amgvsamgf}
\end{figure}
The problem is solved using two time steps. At each time step, the computed displacement solution is applied to the initial configuration, resulting in the deformed configurations shown in \cref{fig:ex4_deformation}. The final configuration is also shown in \cref{fig:testNo4_diagram-final}. The IP inner linear system \cref{eq:linsystem1} is solved using the PCG solver preconditioned with one AMG V-cycle (AMG--PCG). The PCG convergence criterion is met when the relative preconditioned residual norm becomes less than \(\text{tol}_\text{\tiny PCG} = 10^{-10}\). The IP solver exit tolerance is \(\text{tol}_\text{\tiny IP} =10^{-6}\). \cref{fig:amgvsamgf} illustrates the convergence of the AMG--PCG solver for the first time step. As the optimization progresses, we observe a significant increase in the number of iterations, particularly in later steps indicating a degradation in AMG effectiveness. \cref{fig:cond_d} presents the condition number of the diagonal matrix \(\sfD\). It is evident that the condition number of \(\sfD\) grows exponentially as the optimization steps increase. Note that for a fixed time step, both the stiffness matrix \(\sfK\) and the Jacobian matrix \(\sfJ\) remain unchanged throughout the optimization; therefore, the only component of \(\sfA\) that varies is \(\sfD\). Thus, the increasing condition number of \(\sfD\) directly correlates with the worsening performance of AMG preconditioning.

Matrix \(\sfD\) has dimensions \(\tt m \times m\), where \(\tt m\) is the number of contact constraints. The Jacobian matrix \(\sfJ\) has dimensions \(\tt m \times n\), where \(\tt n\) is the total number of elasticity degrees of freedom. However, \(\sfJ\) is highly sparse with at most \(\tt n_c\) nonempty columns, where \(\tt n_c\) corresponds to the number of degrees of freedom involved in contact. Given that \(\sfA = \sfK + \sfJ^{\top} \sfD \sfJ\), and \(\sfJ\) only has \(\tt n_c\) nonzero columns, the term \(\sfJ^{\top} \sfD \sfJ\) affects at most an \(\tt n_c \times n_c\) submatrix of \(\sfA\). The proposed method introduces a subspace correction step that specifically targets the \(\tt n_c\) degrees of freedom that are potentially in contact. Since \(\tt n_c \ll n\), the resulting subspace is small and spatially localized around the contact surface making the subspace correction step inexpensive. In particular, for a 3D problem under uniform mesh refinement, \(\tt n_c\) increases by approximately a factor of four, whereas the total number of degrees of freedom \(\tt n\) increases by a factor of eight.

The construction of the AMGF preconditioner along with theoretical condition number estimates are presented in the next section. As a prelude, in \cref{fig:amgvsamgf_subfig} we also present the PCG iteration count when preconditioned with AMGF (AMGF--PCG). Notably, the iteration count remains nearly constant throughout the optimization process, demonstrating robust convergence with respect to contact enforcement.

\section{AMG with filtering preconditioner}
\label{section:precon}
In this section, we introduce and analyze the AMGF preconditioner, a general-purpose preconditioning framework that is particularly effective in scenarios where the performance of a standard solver degrades due to a problematic small subspace, yet remains effective on its orthogonal complement.
While our primary focus is on contact mechanics, the underlying theory builds on classical subspace correction techniques \cite{xu2001,xu1992,xu2002} and extends naturally to a broad class of problems exhibiting similar subspace-induced difficulties. In what follows, we prove that if a preconditioner \(\sfB\) (e.g., AMG) for an SPD matrix \(\sfA\) achieves a condition number \(\kappa_{\sfB\sfA}\) on the orthogonal complement of a small problematic subspace (\cref{assumption2}) and as a solver is convergent over the whole space (\cref{assumption1}),  then applying \(\sfB\) with filtering (e.g., AMGF) ensures that the condition number over the entire space is approximately twice as large as \(\kappa_{\sfB\sfA}\) (\cref{lemma:main,remark:upper_bound_estimate}).
\subsection{AMGF for contact problems}
Consider the two-body contact problem illustrated in \cref{fig:contact_diagram} and Let \(\mathcal{T}^h_1, \mathcal{T}^h_2\) denote the meshes of \(\Omega_1, \Omega_2\), respectively. The finite element (FE) space is given by
\[\bm{U}_h:=\bm{H}^1(\mathcal{T}^h_1) \times \bm{H}^1(\mathcal{T}^h_2).\]
Let \(\left\{\varphi_i\right\}_{i=1}^{\tt n}\) denote a basis for \(\bm{U}_h\).
We define the coefficient space, i.e., the space of degrees of freedom (DOFs), by
\[\mathbb{U} := \left\{ \{\sfu_i\}_{i=1}^{\tt n} \,|\, u_h = \sum_{i=1}^{\tt n} \sfu_i \varphi_i, u_h \in \bm{U}_h \right\} = \R^{\tt n}\]
Define the index set \(\mathcal{I}_c:=\left\{i \, | \, \text{supp}(\varphi_i)\cap \Gamma_c \neq \emptyset \right\}\) with \(|\mathcal{I}_c|={\tt n_c \le n}\). Then we can define the reduced space of functions adjacent to the contact surface by
\[\bm{W}_h := \text{span}\left\{\varphi_i \, | \, i \in \mathcal{I}_c \right\} \] and the corresponding coefficient space by
\[\mathbb{W} := \left\{\{\sfu_i\}_{i\in\mathcal{I}_c} \, | \, (\sfu_1,\dots, \sfu_{\tt n}) \in\mathbb{U}\right\} = \R^{\tt n_c}\]

\begin{definition}[discrete operators]
We define the following discrete operators:
\begin{itemize}
\item \(\sfP \in \R^{\tt n\times n_c}\) is the matrix corresponding to the natural embedding \(\bm{W}_h \hookrightarrow\bm{U}_h\).
\item \(\sfA \in \R^{\tt n\times n}\) and \(\sfA_{\sfw} \in \R^{\tt n_c \times n_c}\) are the matrices corresponding to the FE operators defined on \(\bm{U}_h\)
and \(\bm{W}_h\) respectively. Note that \(\sfA_{\sfw}:=\sfP^{\top} \sfA \sfP\)
%
%
%
\end{itemize}
\end{definition}
\begin{definition}[A-orthogonal complement]
 We define \(\mathbb{V} \subset \mathbb{U}\) to be the \(\sfA\)-orthogonal complement of the range\((\sfP)\), i.e,
 \[\mathbb{V}:=\left\{\sfu \in \mathbb{U}: (\sfu,\sfP\sfw)_{\sfA} = 0, \,\,\, \forall \sfw \in \mathbb{W}\right\}\]

 \emph{Here, and throughout the remainder of this section, \((\cdot, \cdot)_{\sfA} := (\sfA\cdot, \cdot)\) denotes the \(\sfA\)-inner product on \(\mathbb{U}\), and \((\cdot, \cdot)\) denotes the standard \(l_2\)-inner product.}
\end{definition}
\begin{assumption}[convergent colver]
\label{assumption1}
\(\sfB\) is a convergent solver in \(\mathbb{U}\), i.e,
\begin{equation*}
      (\sfA \sfu,\sfu) \le \omega (\sfB^{-1} \sfu, \sfu), \quad \forall \sfu\in \mathbb{U}, \text{ with }\, \omega \in (0,2).
   \end{equation*}
\end{assumption}
\begin{assumption}[spectral equivalence]
\label{assumption2}
 \(\sfB^{-1}\) is spectrally equivalent to \(\sfA\) on the subspace \(\mathbb{V}\), i.e., there exist \(a>0\) and \(b>0\) such that
\[
   \alpha (\sfA \sfv, \sfv) \le (\sfB^{-1} \sfv, \sfv) \le \beta (\sfA \sfv , \sfv), \quad \sfv \in \mathbb{V}.
\]
\end{assumption}
\begin{definition}[AMGF]
\label{def:AMGF}
 We define \(\sfM\), the \emph{multiplicative} AMG with filtering preconditioner, by
\begin{equation*}
\label{eq:M_itmat}
\sfI-\sfM\sfA = (\sfI-\sfB\sfA)(\sfI-\sfP\sfA_{\sfw}^{-1}\sfP^{\top} \sfA)(\sfI-\sfB\sfA)
\end{equation*}
or equivalently:
\begin{equation*}
      \label{eq:M_splitting}
      \sfM:=\begin{pmatrix}
          \sfI & \sfP
       \end{pmatrix}
      \overline{\sfM}^{-1}
       \begin{pmatrix}
        \sfI \\ \sfP^{\top}
       \end{pmatrix},
   \end{equation*}
   where
   \begin{equation*}
      \overline{\sfM} =
      \begin{pmatrix}
         \sfB^{-1} & 0 \\
         \sfP^{\top} \sfA  & \sfI
      \end{pmatrix}
      \begin{pmatrix}
         (2\sfB^{-1}-\sfA)^{-1} & 0 \\
         0  & \sfA_\sfw
      \end{pmatrix}
      \begin{pmatrix}
         \sfB^{-1} & \sfA \sfP \\
         0  & \sfI
      \end{pmatrix}
   \end{equation*}
\end{definition}
\begin{lemma}[condition number estimate]
\label{lemma:main}
Suppose \cref{assumption1,assumption2} hold. Then \(\sfM\), given in \cref{def:AMGF}, is spectrally equivalent to \(\sfA^{-1}\) and the condition number of the preconditioned system \(\sfM\sfA\) satisfies
\[\kappa(\sfM\sfA) \le \frac{2(\beta + 2+\omega)}{2-\omega}\]
\end{lemma}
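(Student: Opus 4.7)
The plan is to recognize $\sfI-\sfM\sfA$ as the error propagator of a symmetric multiplicative two-level subspace correction (smoother $\sfB$ on $\mathbb{U}$, exact coarse solve on $\mathrm{range}(\sfP)$ via $\sfA_\sfw^{-1}$) and then apply the standard subspace-correction analysis.

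First I would rewrite the error propagator as
\[\sfE:=\sfI-\sfM\sfA = \sfS\,\Pi\,\sfS,\qquad \sfS:=\sfI-\sfB\sfA,\qquad \Pi:=\sfI-\sfP\sfA_\sfw^{-1}\sfP^\top\sfA,\]
and establish three structural facts: (i) $\Pi$ is the $\sfA$-orthogonal projection onto $\mathbb{V}$, because $\sfA_\sfw=\sfP^\top\sfA\sfP$ makes $\pi\sfA=\sfP\sfA_\sfw^{-1}\sfP^\top\sfA$ idempotent with range exactly $\mathrm{range}(\sfP)$; (ii) assuming $\sfB$ is SPD (as for a symmetrically-smoothed AMG cycle), $\sfS$ is $\sfA$-self-adjoint because $\sfA\sfS=\sfA-\sfA\sfB\sfA$ is symmetric; (iii) consequently $\sfE$ is $\sfA$-self-adjoint and $(\sfE u,u)_{\sfA}=\|\Pi\sfS u\|_{\sfA}^{2}\ge 0$. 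Fact (iii) immediately yields $\lambda_{\max}(\sfM\sfA)\le 1$, reducing the condition-number estimate to an explicit bound $\|\sfE\|_{\sfA}<1$.

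Next I would apply the Xu--Zikatanov identity for symmetric multiplicative subspace correction,
\[\|\sfE\|_{\sfA}^{2}=1-\frac{1}{c_{0}},\qquad c_{0}=\sup_{\|u\|_{\sfA}=1}\;\inf_{u=u_{0}+\sfP u_{1}}\bigl[(\bar{\sfB}^{-1}u_{0},u_{0})+(\sfA_\sfw u_{1},u_{1})\bigr],\]
with symmetrized smoother $\bar{\sfB}:=2\sfB-\sfB\sfA\sfB$. \Cref{assumption1} gives $\sfB\sfA\sfB\le\omega\sfB$, hence $\bar{\sfB}\ge(2-\omega)\sfB$ and $\bar{\sfB}^{-1}\le(2-\omega)^{-1}\sfB^{-1}$. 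For the outer supremum I would choose the canonical $\sfA$-orthogonal decomposition $u=u_{V}+\sfP w$ with $u_{V}=\Pi u\in\mathbb{V}$ and set $u_{0}=u_{V}$, $u_{1}=w$. Since $u_{V}\in\mathbb{V}$, \Cref{assumption2} yields $(\sfB^{-1}u_{V},u_{V})\le\beta\|u_{V}\|_{\sfA}^{2}$; additionally $(\sfA_\sfw w,w)=\|\sfP w\|_{\sfA}^{2}$ and $\|u_{V}\|_{\sfA}^{2}+\|\sfP w\|_{\sfA}^{2}=\|u\|_{\sfA}^{2}$ by $\sfA$-orthogonality. Combining these estimates gives $c_{0}\le \beta/(2-\omega)+1=(\beta+2-\omega)/(2-\omega)$.

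Finally, using $\|\sfE\|_{\sfA}\le\sqrt{\beta/(\beta+2-\omega)}$, rationalizing $1/(1-\|\sfE\|_{\sfA})$, and applying the elementary bound $\sqrt{\beta(\beta+2-\omega)}\le\beta+2-\omega$ yields
\[\kappa(\sfM\sfA)\le \frac{1}{1-\|\sfE\|_{\sfA}}\le \frac{2(\beta+2-\omega)}{2-\omega},\]
which is tighter than, and hence implies, the stated bound. The main obstacle is the second paragraph: proving (or cleanly citing) the Xu--Zikatanov identity in exactly this form, and, more importantly, recognizing that \Cref{assumption2} is required \emph{only} on $\mathbb{V}$—the exact coarse solve absorbs precisely the $\mathrm{range}(\sfP)$ modes on which $\sfB$ would fail to be spectrally equivalent to $\sfA^{-1}$, which is the whole point of augmenting AMG with the filtering step.
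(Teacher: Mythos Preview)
Your lower-bound argument (facts (i)--(iii) yielding $\lambda_{\max}(\sfM\sfA)\le 1$) is correct and coincides with the paper's \cref{lem:upper_bound}. The gap is in the upper bound: the Xu--Zikatanov-type identity you invoke,
\[c_{0}=\sup_{\|u\|_{\sfA}=1}\;\inf_{u=u_{0}+\sfP u_{1}}\bigl[(\bar{\sfB}^{-1}u_{0},u_{0})+(\sfA_\sfw u_{1},u_{1})\bigr],\]
is not the correct expression for this particular multiplicative preconditioner. If you expand the paper's block form of $\overline{\sfM}$ in \cref{def:AMGF} and apply Xu's Schwarz lemma (exactly as in \cref{lemma:schwarz2}), you obtain
\[(\sfM^{-1}u,u)=\inf_{w}\Bigl[\,\|u-\sfS\sfP w\|^{2}_{\bar{\sfB}^{-1}}+\|\sfP w\|^{2}_{\sfA}\Bigr],\qquad \sfS=\sfI-\sfB\sfA,\]
i.e.\ the smoother acts on the coarse correction \emph{before} it is subtracted from $u$. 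Your formula drops this $\sfS$ and is essentially the additive-Schwarz stable-decomposition functional, which does not characterize $\sfM$. The extra $\sfB\sfA\sfP w$ term produces the cross term $2(u-\sfP w,\sfA\sfP w)$ that must be controlled; the paper does this via Cauchy--Schwarz and \cref{assumption1}, and this is precisely the origin of the $2+\omega$ in the numerator. Your sharper $2-\omega$ is therefore an artifact of the wrong identity, not a genuine improvement. (There is also a minor slip: since $\sfE=(\Pi\sfS)^{*}(\Pi\sfS)$ in the $\sfA$-inner product, one has $\|\sfE\|_{\sfA}=\|\Pi\sfS\|_{\sfA}^{2}=1-1/c_{0}$, not $\|\sfE\|_{\sfA}^{2}=1-1/c_{0}$; with the correct version $\kappa(\sfM\sfA)\le c_{0}$ directly and the square-root manipulation in your last paragraph is unnecessary.)

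In short, your overall strategy---write $\sfE=\sfS\Pi\sfS$, extract $\lambda_{\max}\le 1$ from positivity, and bound $\lambda_{\min}$ via a stable $\sfA$-orthogonal splitting $u=v+\sfP w$ using \cref{assumption2} only on $\mathbb{V}$---is exactly the paper's. But the identity linking $\sfM^{-1}$ (equivalently $\|\sfE\|_{\sfA}$) to the decomposition constant must be taken in the form the paper derives, with the smoother applied to the coarse component, and the resulting cross term bounded explicitly. Once you do that you recover the paper's constant $2(\beta+2+\omega)/(2-\omega)$.
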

The proof follows standard subspace correction arguments \cite{xu1992} by establishing a stable decomposition of \(\mathbb{U}\) into \(\mathbb{V}\) and \(\mathbb{W}\) as in \cref{lemma:stability}, and deriving upper and lower bounds on the preconditioned norm \(\|\sfu\|_{\sfM^{-1}}\) with respect to \(\|\sfu\|_{\sfA}\) for all \(\sfu \in \mathbb{U}\). For the lower bound we use the following lemma.
\begin{lemma}[lower bound]
For any \(\sfu \in \mathbb{U}\) the following bound holds
\label{lem:upper_bound}
   \begin{equation*}
      (\sfA \sfu, \sfu) \le (\sfM^{-1} \sfu, \sfu)
   \end{equation*}
\end{lemma}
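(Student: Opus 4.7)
The plan is to reduce the claim to the operator inequality $\sfA \preceq \sfM^{-1}$ in the SPSD sense. Rather than manipulating $\sfM^{-1}$ directly, I would show $\sfA - \sfA\sfM\sfA \succeq 0$, which under the change of variable $\sfu\mapsto \sfA^{-1}\sfw$ is equivalent to $\sfM \preceq \sfA^{-1}$, from which inversion yields the lemma.

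Introduce the shorthand $\sfT_0 := \sfB\sfA$ and $\sfT_1 := \sfP\sfA_\sfw^{-1}\sfP^\top \sfA$ so that, by \cref{def:AMGF},
\begin{equation*}
   (\sfA\sfu,\sfu) - (\sfA\sfM\sfA\sfu,\sfu) = ((\sfI - \sfM\sfA)\sfu,\sfu)_{\sfA} = \bigl((\sfI-\sfT_0)(\sfI-\sfT_1)(\sfI-\sfT_0)\sfu,\sfu\bigr)_{\sfA}.
\end{equation*}
The first step is to identify $\sfT_1$ as the $\sfA$-orthogonal projection onto $\mathbb{W}$: the identity $\sfA_\sfw = \sfP^\top \sfA\sfP$ yields $\sfT_1\sfP = \sfP$, while the definition of $\mathbb{V}$ as $\sfA$-orthogonal complement of $\text{range}(\sfP)$ forces $\sfP^\top\sfA\sfv = 0$ for $\sfv\in\mathbb{V}$, giving $\sfT_1\sfv = 0$. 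Hence $\sfI-\sfT_1$ is an $\sfA$-orthogonal projection onto $\mathbb{V}$, in particular $\sfA$-self-adjoint and $\sfA$-idempotent. The second step is to observe that symmetry of $\sfA$ and $\sfB$ renders $\sfT_0$ (and thus $\sfI - \sfT_0$) $\sfA$-self-adjoint, since $(\sfT_0\sfu,\sfv)_\sfA = (\sfA\sfB\sfA\sfu,\sfv) = (\sfu,\sfA\sfB\sfA\sfv) = (\sfu,\sfT_0\sfv)_\sfA$.

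Combining these observations, I would peel off the outer factor $(\sfI-\sfT_0)$ via its $\sfA$-self-adjointness and then invoke $\sfA$-idempotency of $\sfI-\sfT_1$ to conclude
\begin{equation*}
   \bigl((\sfI-\sfT_0)(\sfI-\sfT_1)(\sfI-\sfT_0)\sfu,\sfu\bigr)_\sfA = \bigl\|(\sfI-\sfT_1)(\sfI-\sfT_0)\sfu\bigr\|_\sfA^2 \ge 0,
\end{equation*}
which yields $\sfA\sfM\sfA \preceq \sfA$, then $\sfM \preceq \sfA^{-1}$ via $\sfu\mapsto\sfA^{-1}\sfw$, and finally the lemma by inversion.

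The argument is structural rather than quantitative, so there is no substantial obstacle. The one point to be aware of is the implicit symmetry of $\sfB$, which underlies both the $\sfA$-self-adjointness of $\sfT_0$ and the symmetry of $\sfM$; this is standard for AMG with symmetric pre/post-smoothing but is not spelled out in \cref{assumption1}. Notably, neither the convergence bound $\omega<2$ of \cref{assumption1} nor the spectral equivalence of \cref{assumption2} is used here; those ingredients feed only into the complementary \emph{upper} bound on $(\sfM^{-1}\sfu,\sfu)$ that combines with this lemma to produce the condition number estimate of \cref{lemma:main}.
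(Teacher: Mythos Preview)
Your proposal is correct and follows essentially the same route as the paper: both reduce the claim to showing $\sfA - \sfA\sfM\sfA \succeq 0$, use the factorization $\sfI-\sfM\sfA=(\sfI-\sfB\sfA)(\sfI-\sfT_1)(\sfI-\sfB\sfA)$, and exploit symmetry of $\sfA,\sfB$ together with the fact that $\sfI-\sfT_1$ is an $\sfA$-orthogonal projection. The only difference is presentational---the paper works in the Euclidean inner product and verifies $\sfK^\top\sfA\sfK=\sfA-\sfA\sfP\sfA_\sfw^{-1}\sfP^\top\sfA$ algebraically (which is exactly your idempotency observation), whereas you phrase everything in the $\sfA$-inner product; your remark that neither \cref{assumption1}'s bound $\omega<2$ nor \cref{assumption2} is needed here is also accurate.
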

\begin{proof}
   First we note that the above bound is equivalent to \((\sfA \sfM \sfA \sfu, \sfu) \le (\sfA \sfu, \sfu)\). It suffices to prove that \(\sfA(\sfI- \sfM\sfA)\) is positive semi-definite. \cref{eq:M_itmat} gives
   \begin{equation*} \begin{aligned}
      \sfA - \sfA\sfM\sfA = (\sfI-\sfA\sfB)(\sfA-\sfA\sfP\sfA_{\sfw}^{-1}\sfP^{\top}\sfA)(\sfI-\sfB\sfA)
   \end{aligned}
   \end{equation*}
   This can also be found [\citenum{falgout2005}, Lemma~2.2].  Since \(\sfA\) and \(\sfB\) are symmetric, it is enough to show that \(\sfA-\sfA\sfP\sfA_{\sfw}^{-1}\sfP^{\top}\sfA\) is semi-definite. Indeed, let \(\sfK:=\sfI - \sfP \sfA_{\sfw}^{-1} \sfP^{\top} \sfA\). The result follows by noticing that
   \[\sfK^{\top} \sfA \sfK = \sfA-\sfA\sfP\sfA_{\sfw}^{-1}\sfP^{\top}\sfA\]
   \end{proof}
 The proof of the upper bound is derived from the following two Lemmas. The first one provides a stability estimate and the second is using a key identity from  subspace correction theory.
\begin{lemma}[stability estimate]
\label{lemma:stability}
Consider the space splitting 
   \begin{equation}
\label{eq:splitting}
\sfu = \sfv + \sfP \sfw, \quad \sfu\in \mathbb{U}, \sfv \in \mathbb{V},\sfw \in \mathbb{W}.
\end{equation}
and let \cref{assumption2} hold. Then we have the following stability estimate
   \begin{equation}
   \label{eq:stability}
      2\|\sfv\|^2_{\sfB^{-1}} + (2+\omega)\|\sfP \sfw\|^2_{\sfA} \leq 2(\beta + 2+\omega) \|\sfu\|^2_{\sfA}.
   \end{equation}
\end{lemma}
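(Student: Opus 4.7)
The approach I would take is to exploit the $\sfA$-orthogonality built into the splitting \cref{eq:splitting} together with the spectral equivalence from \cref{assumption2}. The key observation is that since $\sfv \in \mathbb{V}$ is the $\sfA$-orthogonal complement of range$(\sfP)$, we have $(\sfv, \sfP\sfw)_{\sfA} = 0$, so expanding $\|\sfu\|^2_{\sfA}$ yields the Pythagorean identity
\[
\|\sfu\|^2_{\sfA} = \|\sfv\|^2_{\sfA} + \|\sfP\sfw\|^2_{\sfA}.
\]
In particular, both $\|\sfv\|^2_{\sfA} \le \|\sfu\|^2_{\sfA}$ and $\|\sfP\sfw\|^2_{\sfA} \le \|\sfu\|^2_{\sfA}$, which are the only consequences of the splitting I plan to use.

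Next, to control the $\sfB^{-1}$-norm on the left-hand side of \cref{eq:stability}, I would invoke the upper estimate in \cref{assumption2}. Since $\sfv$ lies in $\mathbb{V}$, the assumption directly gives $\|\sfv\|^2_{\sfB^{-1}} \le \beta \|\sfv\|^2_{\sfA}$. Substituting this bound into \cref{eq:stability} and then applying the two Pythagorean bounds yields
\[
2\|\sfv\|^2_{\sfB^{-1}} + (2+\omega)\|\sfP\sfw\|^2_{\sfA} \le 2\beta\|\sfv\|^2_{\sfA} + (2+\omega)\|\sfP\sfw\|^2_{\sfA} \le (2\beta + 2 + \omega)\|\sfu\|^2_{\sfA},
\]
which in turn is bounded by $2(\beta + 2 + \omega)\|\sfu\|^2_{\sfA}$ since $\omega > 0$ implies $2 + \omega \le 4 + 2\omega$. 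This chain of inequalities is exactly the desired stability estimate.

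I do not anticipate any real obstacle here, because the proof reduces to a direct application of $\sfA$-orthogonality combined with the subspace spectral bound; the lower constant $\alpha$ from \cref{assumption2} and \cref{assumption1} itself are not invoked for this particular estimate. The only mildly non-obvious point is that the coefficient $(2+\omega)$ on $\|\sfP\sfw\|^2_{\sfA}$ is deliberately larger than what is strictly required here; it is chosen so that the conclusion slots cleanly into the proof of \cref{lemma:main}, where $\omega$ naturally arises through the $(2\sfB^{-1} - \sfA)^{-1}$ block of $\overline{\sfM}$ in \cref{def:AMGF} (well-defined precisely because \cref{assumption1} guarantees $\omega\in(0,2)$, i.e., $2\sfB^{-1}-\sfA \succ 0$). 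Thus the slack in the bound $(2\beta+2+\omega) \le 2(\beta+2+\omega)$ is a harmless accommodation of the downstream algebra rather than a deficiency of this argument.
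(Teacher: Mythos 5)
Your proof is correct, and your key step genuinely differs from (and in fact tightens) the paper's. Both arguments identify $\sfv$ with the $\sfA$-orthogonal projection of $\sfu$ onto $\mathbb{V}$ and bound the first term identically via \cref{assumption2}: $\|\sfv\|^2_{\sfB^{-1}} \le \beta\|\sfv\|^2_{\sfA} \le \beta\|\sfu\|^2_{\sfA}$. For the second term, however, the paper uses the triangle inequality $\|\sfP\sfw\|_{\sfA} \le \|\sfu\|_{\sfA} + \|\sfv\|_{\sfA} \le 2\|\sfu\|_{\sfA}$, i.e.\ $\|\sfP\sfw\|^2_{\sfA} \le 4\|\sfu\|^2_{\sfA}$, which when combined yields the constant $2\beta + 4(2+\omega) = 2\beta + 8 + 4\omega$ --- strictly larger than the stated $2(\beta+2+\omega) = 2\beta + 4 + 2\omega$. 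Your Pythagorean identity $\|\sfu\|^2_{\sfA} = \|\sfv\|^2_{\sfA} + \|\sfP\sfw\|^2_{\sfA}$, which is valid because the splitting is $\sfA$-orthogonal by the definition of $\mathbb{V}$, gives $\|\sfP\sfw\|^2_{\sfA} \le \|\sfu\|^2_{\sfA}$ and hence the constant $2\beta + 2 + \omega$, comfortably below the claimed $2(\beta+2+\omega)$. So your route is the one that actually delivers the constant appearing in \cref{eq:stability}; the paper's triangle-inequality step, taken literally, proves only a weaker estimate that would propagate a slightly larger constant into \cref{lemma:main}. Your remaining observations --- that neither $\alpha$ nor \cref{assumption1} is needed here, and that the $(2+\omega)$ weight is dictated by the downstream Schwarz-lemma algebra in \cref{lemma:schwarz2} --- are accurate.
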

\begin{proof}
   Let \(\sfv\) be the \(\sfA\)-orthogonal projection of \(\sfu\) onto \(\mathbb{V}\), i.e.,
   \[
   \sfv := (\sfI - \sfP (\sfP^{\top} \sfA \sfP)^{-1} \sfP^{\top} \sfA) \sfu, \text{ and } \sfw = (\sfP^{\top} \sfA \sfP)^{-1} \sfP^{\top} \sfA \sfu.
   \]
   \emph{Bound on the first term:}
   By definition of the \(\sfA\)-orthogonal projection we obtain
   \[\|\sfv\|_{\sfA} \le \|\sfI - \sfP (\sfP^{\top} \sfA \sfP)^{-1} \sfP^{\top} \sfA\|_{\sfA} \|\sfu\|_{\sfA} \le \|\sfu\|_{\sfA}\]
In addition, by \cref{assumption2} on spectral equivalence on \(\mathbb{V}\), we have:
   \begin{equation*}
      \|\sfv\|^2_{\sfB^{-1}} \le \beta \|\sfv\|^2_{\sfA} \le \beta \|\sfu\|^2_{\sfA}.
   \end{equation*}
\emph{Bound on the second term:} By construction, \(\sfP \sfw = \sfu - \sfv\), and by triangle inequality we have
\[
  \|\sfP \sfw\|_{\sfA} \leq \|\sfu\|_A + \|\sfv\|_A \leq 2 \|\sfu\|_A
\]
Combining the above bounds we get the final stability estimate \cref{eq:stability} and this completes the proof.
\end{proof}
\begin{lemma}
\label{lemma:schwarz2}
Consider the decomposition \cref{eq:splitting} and let \cref{assumption1} hold. Then
   \begin{equation*}
      (\sfM^{-1} \sfu, \sfu) \le
       \frac{1}{2-\omega} \inf_{\sfw \in \mathbb{W}}
      \left\{ 2\|\sfu - \sfP \sfw\|^2_{\sfB^{-1}} +
       (2+\omega) \|\sfP \sfw\|^2_{\sfA}  \right\}.
   \end{equation*}
\end{lemma}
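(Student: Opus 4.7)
The plan is to bound $(\sfM^{-1}\sfu,\sfu)$ via a variational characterization, then reduce it to an algebraic estimate using the block LDL$^{\top}$ form of $\overline{\sfM}$ given in \cref{def:AMGF}. Writing
\[
\sfM = \tilde{\sfP}^{\top}\overline{\sfM}^{-1}\tilde{\sfP}, \qquad \tilde{\sfP} := \begin{pmatrix} \sfI \\ \sfP^{\top} \end{pmatrix},
\]
I would first note that $\tilde{\sfP}^{\top}$ is surjective (its leading block is $\sfI$) and that $\overline{\sfM}$ is SPD: symmetry follows because the outer triangular factor in \cref{def:AMGF} is the transpose of the inner one, while positive definiteness uses $\omega<2$ in \cref{assumption1} to conclude $2\sfB^{-1}-\sfA\ge(2-\omega)\sfB^{-1}\succ 0$, together with the fact that $\sfA_{\sfw}=\sfP^{\top}\sfA\sfP$ is SPD. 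A routine KKT calculation for the convex quadratic problem $\min_{\sfy}(\overline{\sfM}\sfy,\sfy)$ subject to $\tilde{\sfP}^{\top}\sfy=\sfu$ then yields the identity
\[
(\sfM^{-1}\sfu,\sfu) = \inf_{\tilde{\sfP}^{\top}\sfy=\sfu}(\overline{\sfM}\sfy,\sfy),
\]
so every feasible $\sfy$ furnishes an upper bound.

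Given $\sfw\in\mathbb{W}$, I would then take $\sfv:=\sfu-\sfP\sfw$ and $\sfy:=(\sfv,\sfw)^{\top}$, which satisfies the constraint $\tilde{\sfP}^{\top}\sfy=\sfu$. Using the factorization $\overline{\sfM}=\sfL\sfD\sfL^{\top}$ from \cref{def:AMGF}, one has $(\overline{\sfM}\sfy,\sfy)=\|\sfL^{\top}\sfy\|^{2}_{\sfD}$, and a direct block multiplication gives $\sfL^{\top}\sfy=(\sfB^{-1}\sfv+\sfA\sfP\sfw,\sfw)^{\top}$, leading to
\[
(\overline{\sfM}\sfy,\sfy) = ((2\sfB^{-1}-\sfA)^{-1}\sfz,\sfz) + \|\sfP\sfw\|^{2}_{\sfA}, \qquad \sfz := \sfB^{-1}\sfv+\sfA\sfP\sfw,
\]
where I have already used $(\sfA_{\sfw}\sfw,\sfw)=\|\sfP\sfw\|^{2}_{\sfA}$.

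To finish, I would invoke \cref{assumption1} twice. From $\sfA\le\omega\sfB^{-1}$, I get $(2\sfB^{-1}-\sfA)^{-1}\le(2-\omega)^{-1}\sfB$, and by inversion monotonicity for SPD matrices also $\sfB\le\omega\sfA^{-1}$. Expanding
\[
\|\sfz\|^{2}_{\sfB} = \|\sfv\|^{2}_{\sfB^{-1}} + 2(\sfv,\sfP\sfw)_{\sfA} + \|\sfA\sfP\sfw\|^{2}_{\sfB},
\]
the second bound yields $\|\sfA\sfP\sfw\|^{2}_{\sfB}\le\omega\|\sfP\sfw\|^{2}_{\sfA}$, and Cauchy--Schwarz in the $\sfA$-inner product followed by Young's inequality (using $\|\sfv\|^{2}_{\sfA}\le\omega\|\sfv\|^{2}_{\sfB^{-1}}$) gives $2(\sfv,\sfP\sfw)_{\sfA}\le\|\sfv\|^{2}_{\sfB^{-1}}+\omega\|\sfP\sfw\|^{2}_{\sfA}$. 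Combining produces $\|\sfz\|^{2}_{\sfB}\le 2\|\sfv\|^{2}_{\sfB^{-1}}+2\omega\|\sfP\sfw\|^{2}_{\sfA}$; substituting into $(\overline{\sfM}\sfy,\sfy)$ and collecting the $\|\sfP\sfw\|^{2}_{\sfA}$ coefficients yields exactly $(2-\omega)^{-1}\bigl(2\|\sfv\|^{2}_{\sfB^{-1}}+(2+\omega)\|\sfP\sfw\|^{2}_{\sfA}\bigr)$. Taking the infimum over $\sfw\in\mathbb{W}$ delivers the claim.

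The main obstacle is the variational identity in the first step: once the SPD character of $\overline{\sfM}$ and surjectivity of $\tilde{\sfP}^{\top}$ are in hand, the identity is standard, but a careful reader will want both of these facts stated explicitly before accepting the minimization characterization of $(\sfM^{-1}\sfu,\sfu)$. Everything downstream is mechanical algebra together with the two applications of the one-sided operator inequality from \cref{assumption1}.
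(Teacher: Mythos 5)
Your proposal is correct and follows essentially the same route as the paper: the variational characterization of $(\sfM^{-1}\sfu,\sfu)$ (which you derive via KKT where the paper cites the Schwarz lemma of Xu--Zikatanov), the block $\sfL\sfD\sfL^{\top}$ expansion of $\overline{\sfM}$, the two consequences $(2\sfB^{-1}-\sfA)^{-1}\le(2-\omega)^{-1}\sfB$ and $\sfB\le\omega\sfA^{-1}$ of \cref{assumption1}, and Young's inequality for the cross term. The only cosmetic difference is that you bound $2(\sfv,\sfP\sfw)_{\sfA}$ by Cauchy--Schwarz in the $\sfA$-inner product, whereas the paper splits it as $2\bigl(\sfB^{-1/2}\sfv,\sfB^{1/2}\sfA\sfP\sfw\bigr)$; both yield the identical estimate $\|\sfv\|^2_{\sfB^{-1}}+\omega\|\sfP\sfw\|^2_{\sfA}$.
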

\begin{proof}
   The proof relies on the Schwarz lemma [\citenum{xu2002}, Lemma 2.4]. Applying this lemma on $\sfM$ gives
%
   \begin{equation}
   \label{inf_eq}
      (\sfM^{-1}  \sfu, \sfu) = \inf_{\sfw \in \mathbb{W}} \left\{\overline{\sfM}
      \begin{pmatrix}
         \sfu - \sfP \sfw \\
         \sfw
      \end{pmatrix},
      \begin{pmatrix}
         \sfu - \sfP \sfw \\
         \sfw
      \end{pmatrix} \right\} .
   \end{equation}
%
%
After expanding \cref{inf_eq} and noticing that \cref{assumption1} implies that \(\forall \sfu\in \mathbb{U}\)
\begin{equation*}
   \left((2\sfB^{-1} - \sfA)^{-1} \sfu, \sfu\right) \le \frac{1}{2-\omega} \left(\sfB \sfu, \sfu \right) \le \frac{\omega}{2-\omega} \left(\sfA^{-1} \sfu, \sfu\right)
\end{equation*}
we get
\begin{equation*}
\begin{aligned}
 (\sfM^{-1}  \sfu, \sfu)
 &\le \inf_{\sfw \in \mathbb{W}} \left\{ \left(\sfA_{\sfw} \sfw, \sfw\right) + \frac{1}{2-\omega} \left( \sfu-\sfP\sfw + \sfB \sfA \sfP \sfw, \sfB^{-1} \left(\sfu-\sfP\sfw\right) + \sfA\sfP\sfw \right) \right\} \\
 &\le \inf_{\sfw \in \mathbb{W}} \left\{ \|\sfP \sfw\|^2_{\sfA} + \frac{1}{2-\omega} \left( \|\sfu-\sfP\sfw\|^2_{\sfB^{-1}} + \omega \|\sfP \sfw\|^2_{\sfA} + 2\left(\sfu-\sfP\sfw, \sfA\sfP\sfw\right) \right) \right\}
\end{aligned}
\end{equation*}
The result follows by noting that
\begin{equation*}
\begin{aligned}
2(\sfu-\sfP\sfw, \sfA\sfP\sfw)
&= 2\left(\sfB^{-1/2} \left(\sfu-\sfP\sfw\right), \sfB^{1/2} \sfA \sfP \sfw\right) \\
&\le \|\sfB^{-1/2} (\sfu-\sfP\sfw)\|^2 + \|\sfB^{1/2} \sfA \sfP \sfw\|^2 \\
&\le \|\sfu-\sfP\sfw\|^2_{\sfB^{-1}} + \omega \| \sfP \sfw\|^2_{\sfA} 
\end{aligned}
\end{equation*}
\phantom{x}
\end{proof}
Combining \cref{lemma:stability,lemma:schwarz2} gives an estimate of the upper bound and therefore the proof of \cref{lemma:main} is derived by
\begin{equation*}
\begin{aligned}
(\sfA\sfu,\sfu) \le (\sfM^{-1} \sfu,\sfu) &\le \frac{1}{2-\omega} (2\|\sfu - \sfP\sfw\|^2_{\sfB^{-1}} + (2+\omega)\|\sfP\sfw\|^2_{\sfA}) \\
& \le \frac{2(\beta + 2+\omega)}{2-\omega} (\sfA \sfu,\sfu)
\end{aligned}
\end{equation*}
\begin{remark}
We emphasize that this result holds in a more general setting beyond the contact mechanics context described here. Specifically, the same condition number estimate applies when \(\sfP: \mathbb{W} \rightarrow \mathbb{U}\) is any full column rank matrix,  \(\sfA\) is an SPD matrix defined on \(\mathbb{U}\), \(\sfB\) is a convergent solver for \(\sfA\) in \(\mathbb{U}\) and is spectrally equivalent with \(\sfA^{-1}\) on \(\mathbb{V}\),  the \(\sfA\)-orthogonal complement of range(\(\sfP\)). 
\end{remark}

\begin{remark}
\label{remark:upper_bound_estimate}
In our framework \(\sfB := \text{AMG}(\sfA)\) which can be made convergent with \mbox{\(\omega = 1\)} (and \(\alpha = 1\) in \cref{assumption2})  using \(l_1\)-smoothers \cite{Baker2011}. In this case, the condition number upper bound simplifies to
   \begin{equation}
   \kappa(\sfM \sfA) \le 2 (\beta + 3).
   \label{eq:practical_bound}
   \end{equation}
Here \(\beta\) represents the condition number of the elasticity matrix without contact, preconditioned with AMG. In practice, this result implies that the increase in PCG iterations is bounded by approximately a factor of \(\sqrt{2}\).
\end{remark}
%

%
%
\section{Numerical experiments}
\label{section:results}

In this section, we present numerical experiments to evaluate the performance of the proposed AMGF preconditioner for solving large-scale contact mechanics problems using an IP method. The primary objectives are to assess the convergence behavior of the preconditioned solver across different problem setups, examine its robustness with respect to both mesh refinement and the enforcement of contact constraints, which significantly impact system conditioning, and analyze its scalability for both linear and nonlinear contact problems. All the numerical experiments are implemented using the MFEM FE library \cite{mfem1,mfem2}. The gap function and its Jacobian are computed through Tribol \cite{Tribol}. The AMGF preconditioner employs the \emph{BoomerAMG} solver along with the convergent \(l_1\)-Gauss--Seidel smoother \cite{Baker2011} from \emph{hypre} \cite{hypre,falgout2002hypre} and a multi-frontal direct solver from MUMPS \cite{MUMPS:1} for the subspace filtering.

We consider three benchmark problems: a two-block contact problem, an ironing problem, and a beam-sphere problem. Each test is solved using the Newton-based IP method, where the linear system at each optimization iteration is solved by the AMGF--PCG solver. The subsequent numerical results highlight the effectiveness of the AMGF preconditioner in improving solver convergence, keeping iteration counts bounded despite mesh refinement and contact enforcement.
\subsection{Two-block contact problem}
As a first example we consider the two-block contact scenario introduced in \Cref{section:Model_problem}. This setup is used to systematically evaluate how the enforcement of contact constraints affects solver performance under successive mesh refinements. \Cref{tab:problem_size_test4} reports the dimension of the solution space \(\mathbb{U}\), denoted by \(\tt n\),  as well as the maximum dimension of the contact subspace \(\mathbb{W}\), denoted by \(\tt n_{\tt c}^{max}\), encountered throughout the time-stepping process. Note that \(\dim(\mathbb{W})\) varies over time, since contact constraints are re-evaluated at each time step. We also report the maximum number of contact constraints, \(\tt m^{max}\), observed during the simulation. Solver performance is summarized via \(\tt{k}_{IP}^{avg}\), the average number of IP iterations over all time steps and \(\tt{k}_{AMGF}^{avg}\), the average number of AMGF--PCG iterations required across all time and optimization steps. In parentheses next to each \(\tt{k}_{AMGF}^{avg}\) value, we include an upper bound derived from the theoretical estimate \cref{eq:practical_bound}. In practice, the upper bound on the AMGF--PCG iteration count is computed by recording the number of AMG--PCG iterations required for the corresponding contact-free problem, and then multiplying that value by $\sqrt{\frac{2 (\beta + 3)}{\beta}}$ which approaches $\sqrt{2}$ as $\beta$ become large.
\begin{table}[H]
    \small
    \centering
    \begin{tabular}{lrrrrrr}
        \hline
        \hline
                                  & Mesh 1  & Mesh 2 & Mesh 3  & Mesh 4    & Mesh 5      & Mesh 6 \\
        \hline
         \(\tt n\)                & 13,380  & 93,714 & 699,486 & 5,401,014 & 42,440,550  & 336,477,894 \\
         \(\tt n_{\tt c}^{max}\)  & 348     & 1,092  & 4,002   & 15,369    & 60,207      & 238,458 \\
         \(\tt m^{max}\)          & 81      & 289    & 1,089   & 4,225     & 16,641      & 66,049       \\
        \hdashline
        
        \(\tt{k}_{IP}^{avg}\)    & 16      & 20        & 20        & 20        & 22         & 23    \\
         \(\tt{k}_{AMGF}^{avg}\)  & 72 (155)& 72 (158)  & 81 (175)  & 97 (210)  & 124 (268)  & 150 (337)    \\
        \hline
    \end{tabular}
    \caption{{\bf Solver iteration counts for the two-block contact problem across mesh refinement levels.} Starting from the initial coarse mesh (Mesh 1), each subsequent mesh is obtained by uniformly refining the previous one. Here \(\tt n\) denotes the total number of DOFs in the solution space \(\mathbb{U}\), while \(\tt n_{\tt c}^{max}\) and \(\tt m^{max}\) represent the maximum dimension of the contact subspace \(\mathbb{W}\) and the maximum number of contact constraints encountered across all time steps, respectively. We also list \(\tt{k}_{IP}^{avg}\), the average number of IP iterations over all time steps. The last row reports \(\tt{k}_{AMGF}^{avg}\), the average AMGF--PCG iteration count across all optimization and time steps. In parentheses we include an upper bound estimate derived from the theoretical condition number upper bound given by \cref{eq:practical_bound}.}
    \label{tab:problem_size_test4}
\end{table}
\Cref{fig:test4_results} presents the number of AMGF--PCG and AMG--PCG iterations per optimization step across different refinement levels. Each subplot corresponds to a specific refinement level, ranging from Mesh 1 (coarsest) to Mesh 6 (finest), and shows results for two time steps. The estimated upper bound for AMGF--PCG iterations, derived from the condition number estimate \cref{eq:practical_bound} in \Cref{remark:upper_bound_estimate}, is indicated by a horizontal dashed line in each plot. As expected, solver iteration counts increase with mesh refinement. The growth in PCG iterations mirrors the behavior of standard AMG methods, which are sensitive to the number of processors used in parallel. This sensitivity is due in part to the use of the \(l_1\)\nobreakdash-Gauss--Seidel smoother at each AMG level, which exhibits minor performance degradation as core counts increase \cite{Baker2011}. This trend is also reflected in the AMGF--PCG increasing upper bound.

More importantly, the results highlight the limitations of the classical AMG preconditioner: in the presence of contact constraints, AMG--PCG often suffers from dramatic iteration count growth, with some curves reaching the maximum PCG iterations, chosen here as 5000, which indicates failure to converge to the requested tolerance. In sharp contrast, the AMGF--PCG solver maintains robust performance, with iteration counts staying well within theoretical bounds across all refinement levels and time steps. These results confirm that the AMGF preconditioner effectively mitigates the ill-conditioning introduced by contact enforcement, enabling reliable and efficient convergence throughout the optimization process.
\begin{figure}[H]
    \centering
    \begin{subfigure}{0.32\textwidth}
        \centering
        \includegraphics[width=1.0\linewidth]{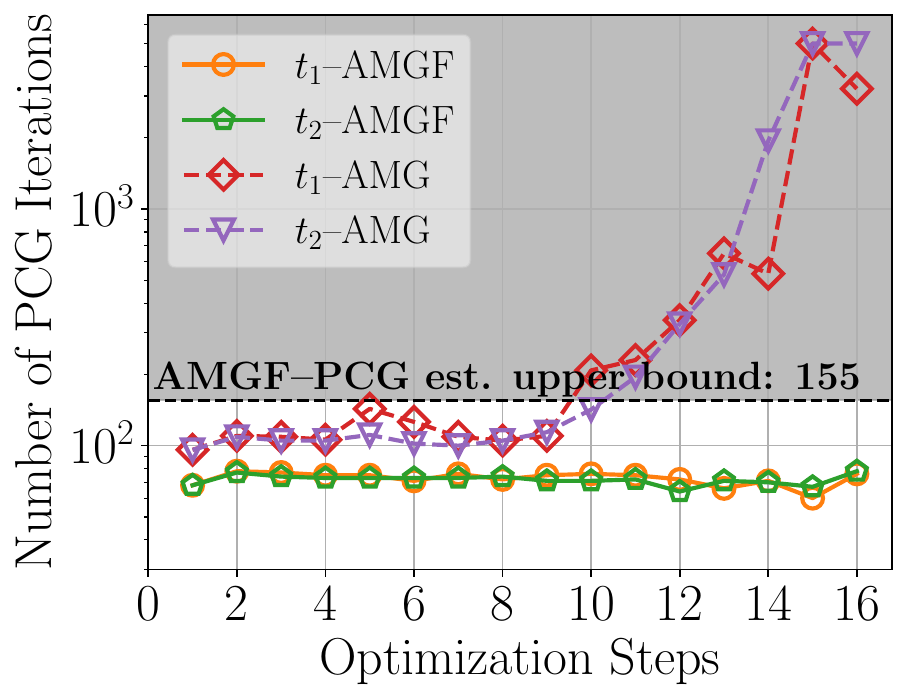}
        \caption{Mesh 1: 13,380 DOFs}
        \label{fig:testNo4_ref1}
    \end{subfigure}
    \begin{subfigure}{0.32\textwidth}
        \centering
        \includegraphics[width=1.0\linewidth]{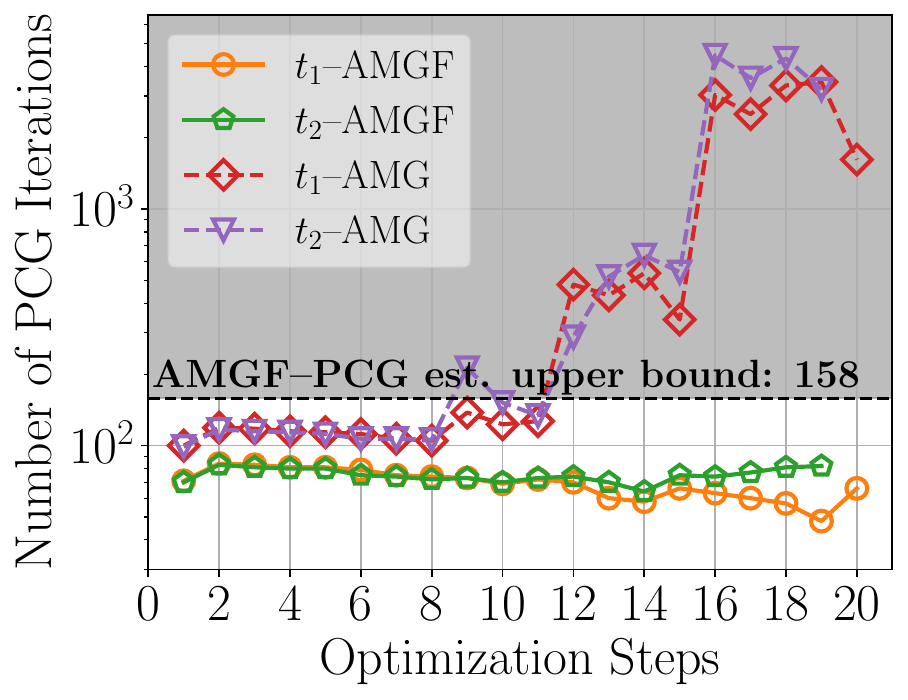}
        \caption{Mesh 2: 93,714 DOFs}
        \label{fig:testNo4_ref2}
    \end{subfigure}
        \begin{subfigure}{0.32\textwidth}
        \centering
        \includegraphics[width=1.0\linewidth]{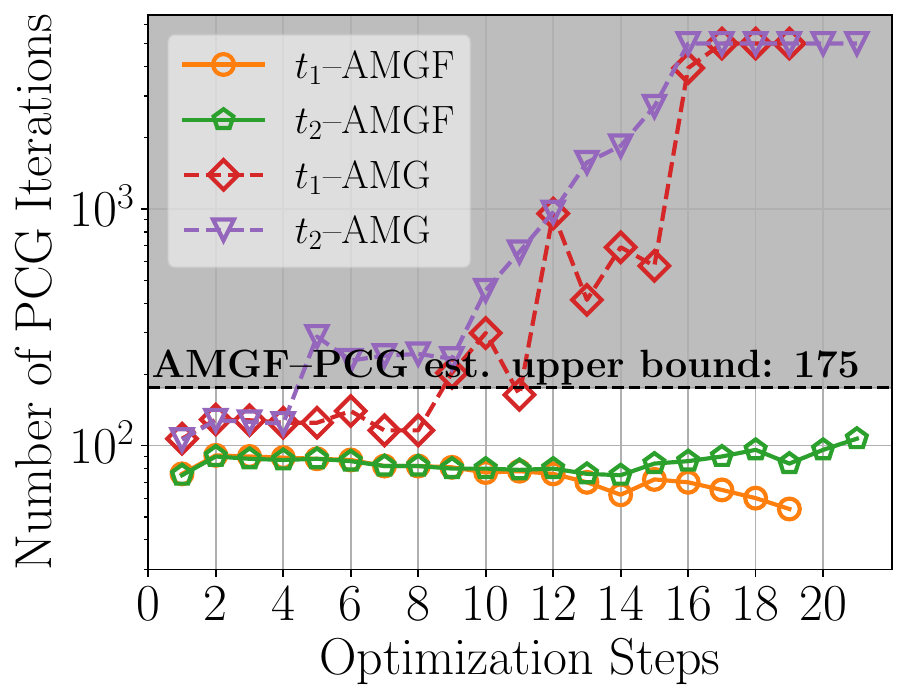}
        \caption{Mesh 3: 699,486 DOFs}
        \label{fig:testNo4_ref3}
    \end{subfigure}

    \begin{subfigure}{0.32\textwidth}
        \centering
        \includegraphics[width=1.0\linewidth]{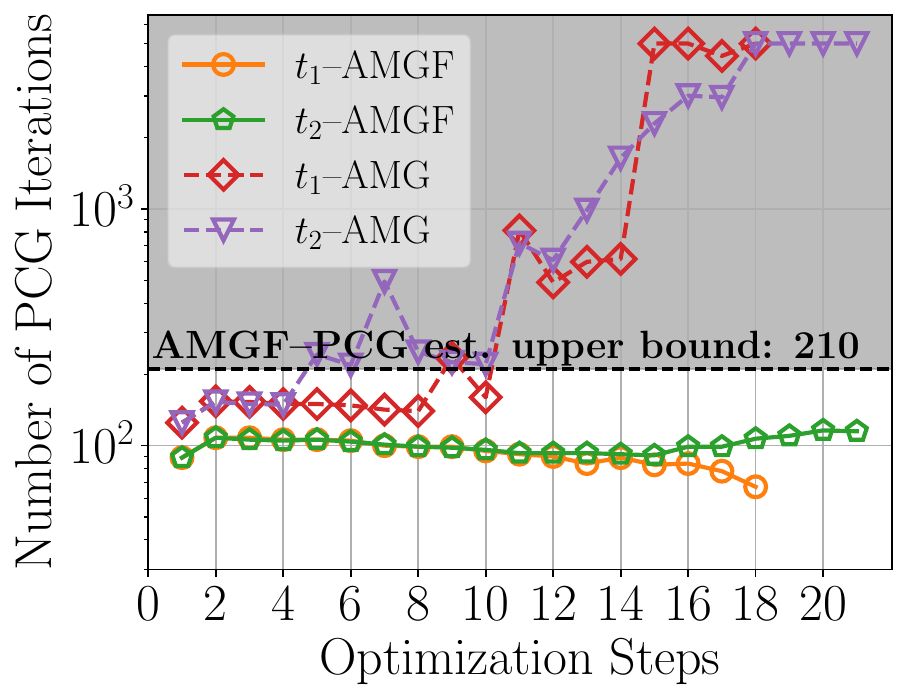}
        \caption{Mesh 4: 5,401,014 DOFs}
        \label{fig:testNo4_ref4}
    \end{subfigure}
    \begin{subfigure}{0.32\textwidth}
        \centering
        \includegraphics[width=1.0\linewidth]{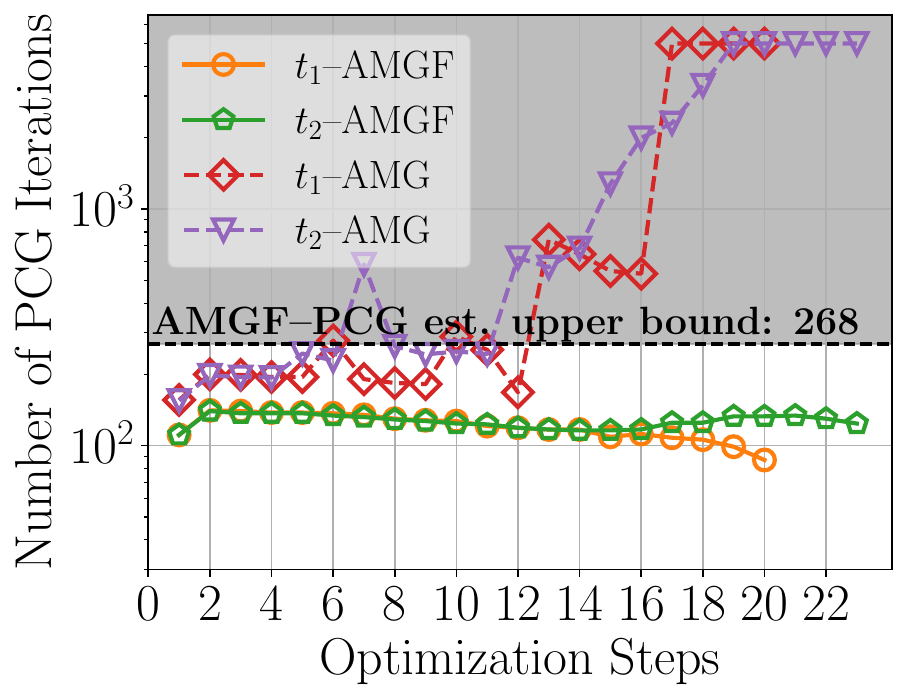}
        \caption{Mesh 5: 42,440,550 DOFs}
        \label{fig:testNo4_ref5}
    \end{subfigure}
        \begin{subfigure}{0.32\textwidth}
        \centering
        \includegraphics[width=1.0\linewidth]{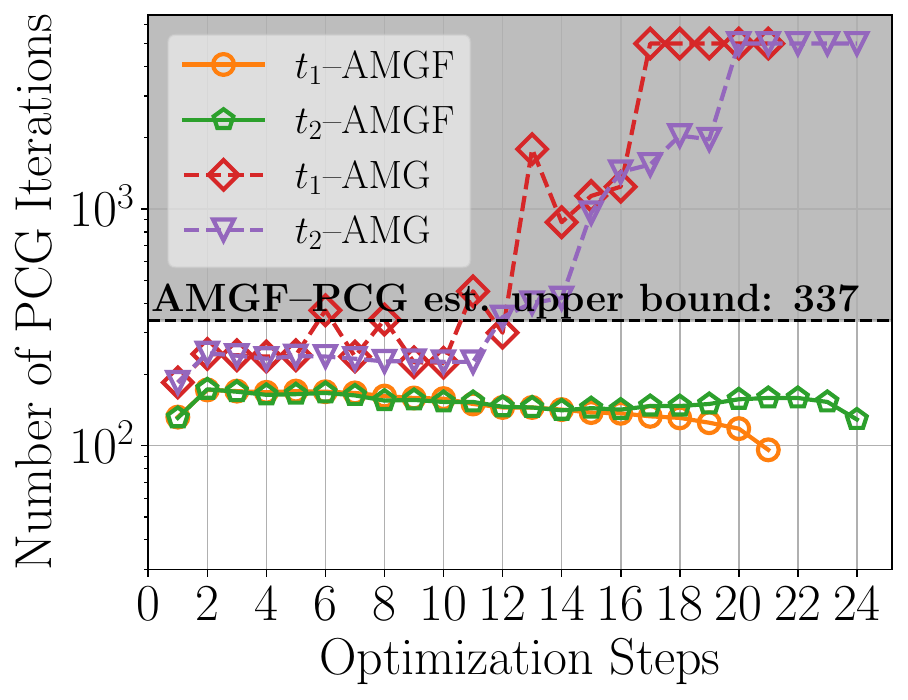}
        \caption{Mesh 6: 336,477,894 DOFs}
        \label{fig:testNo4_ref6}
    \end{subfigure}
    \caption{Comparison of AMGF--PCG and AMG-PCG solvers with respect to iteration count for the two-block problem. Each curve represents the PCG iteration count throughout the IP optimization for time steps $t_i$, $i=1,2$. The horizontal line indicates the estimate of the upper bound derived from the theoretical condition number result \cref{eq:practical_bound}, i.e., the number of iterations of the AMGF--PCG solver when applied to the contact problem is bounded by approximately \(\sqrt{2}\) times the AMG--PCG solver count when applied to the contact-free problem.}
    \label{fig:test4_results}
\end{figure}
\subsection{Ironing problem}
The \emph{ironing} problem is a common benchmark in computational contact mechanics \cite{pusolaur04a}. It consists of two deformable bodies, a cylindrical die resting on an elastic slab (rectangular block) as illustrated in \Cref{fig:testNo5_diagram}. The die has a nonhomogeneous Dirichlet BC at its top face \(\Gamma_1\) given by \(u_{\Gamma_1}:=(\frac{15}{7},0,-\frac{5}{7})\), while the slab has a homogeneous Dirichlet BC at its bottom surface, i.e, the die is pushed towards the negative \(z\) and the positive \(x\) direction. Here, the top face of the large block serves as the mortar surface, while the bottom face of the die is the nonmortar surface, constrained against the mortar surface. All other boundary surfaces are traction free and no other external body forces are applied. As in \Cref{section:Model_problem} the Lam\'{e} parameters for the two bodies correspond to the Young moduli  \(E_{\text{die}} = 1000 , E_{\text{slab}} = 1\) and the Poisson's ratios \(\nu_{\text{die}} = 0, \nu_{\text{slab}}=0.499\).
\begin{figure}[H]
    \begin{subfigure}[b]{0.49\textwidth}
    \centering
    \includegraphics[width=1.0\linewidth]{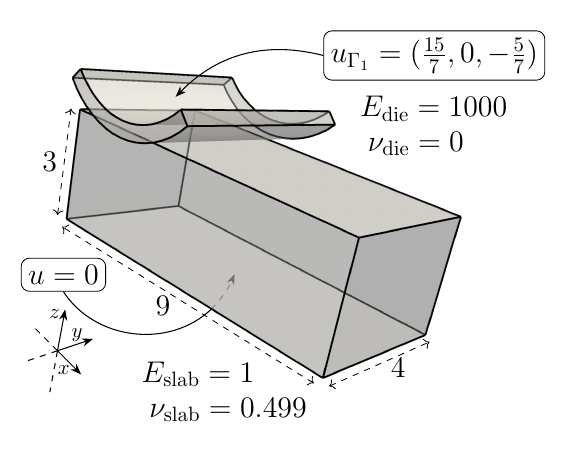}
    \caption{Initial configuration}
    \label{fig:testNo5_diagram}
    \end{subfigure}
    \begin{subfigure}[b]{0.49\textwidth}
        \centering
        \includegraphics[width=0.87\linewidth]{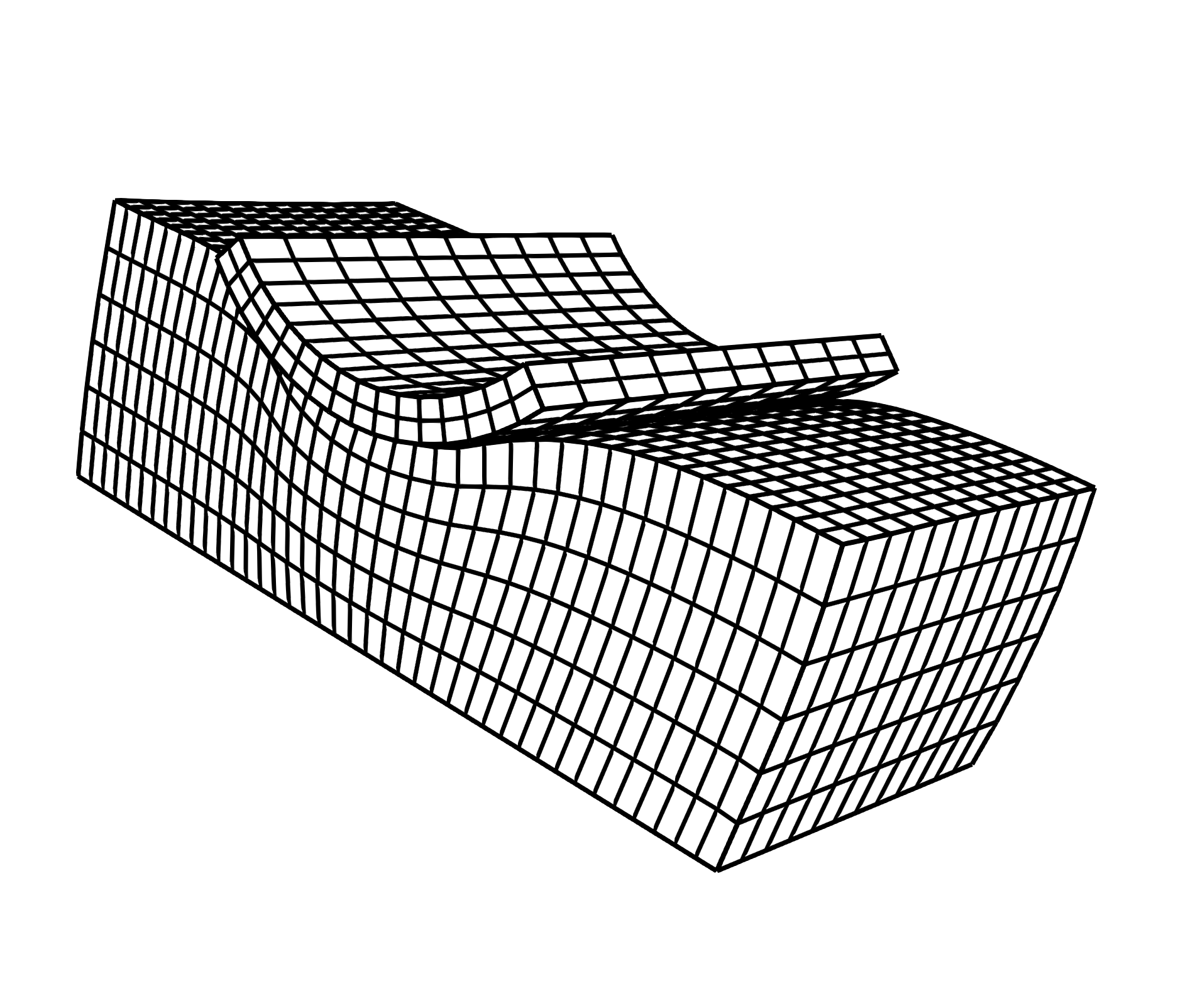}
    \caption{Final configuration for mesh level 1}
    \label{fig:testNo5_diagram-final}
    \end{subfigure}
    \caption{Ironing problem: a small die (top body) is in contact with a rectangular slab (bottom body). The simulation is driven by a nonhomogeneous Dirichlet BC \(u = (\frac{15}{7},0,-\frac{5}{7})\) enforced at the top face of the die, i.e., the die is pushed towards the positive \(x\) and negative \(z\) direction. The bottom face of the slab is fixed at \(u=0\) and the rest of the boundary is traction free.}
\end{figure}
The problem is solved using ten time steps. During the first three time steps the die is incrementally pushed towards the negative \(z\) direction (downwards) and in the remaining seven time steps it's pushed towards the positive \(x\) direction, i.e.,
\[
u^{t_i}_{\Gamma_1} =
\begin{cases}
    (0, 0, -\frac{5}{7} \frac{i}{3}), & i = 1,\ldots,3 \\
    (\frac{15}{7} \frac{i-3}{7}, 0, -\frac{5}{7}),  &  i = 4,\ldots,10
\end{cases}
\]
\begin{figure}[H]
\setlength{\belowcaptionskip}{-1pt} %
    \centering
        \begin{subfigure}{0.295\textwidth}
        \centering
        \includegraphics[width=1.0\linewidth,trim=75 0 150 150, clip]{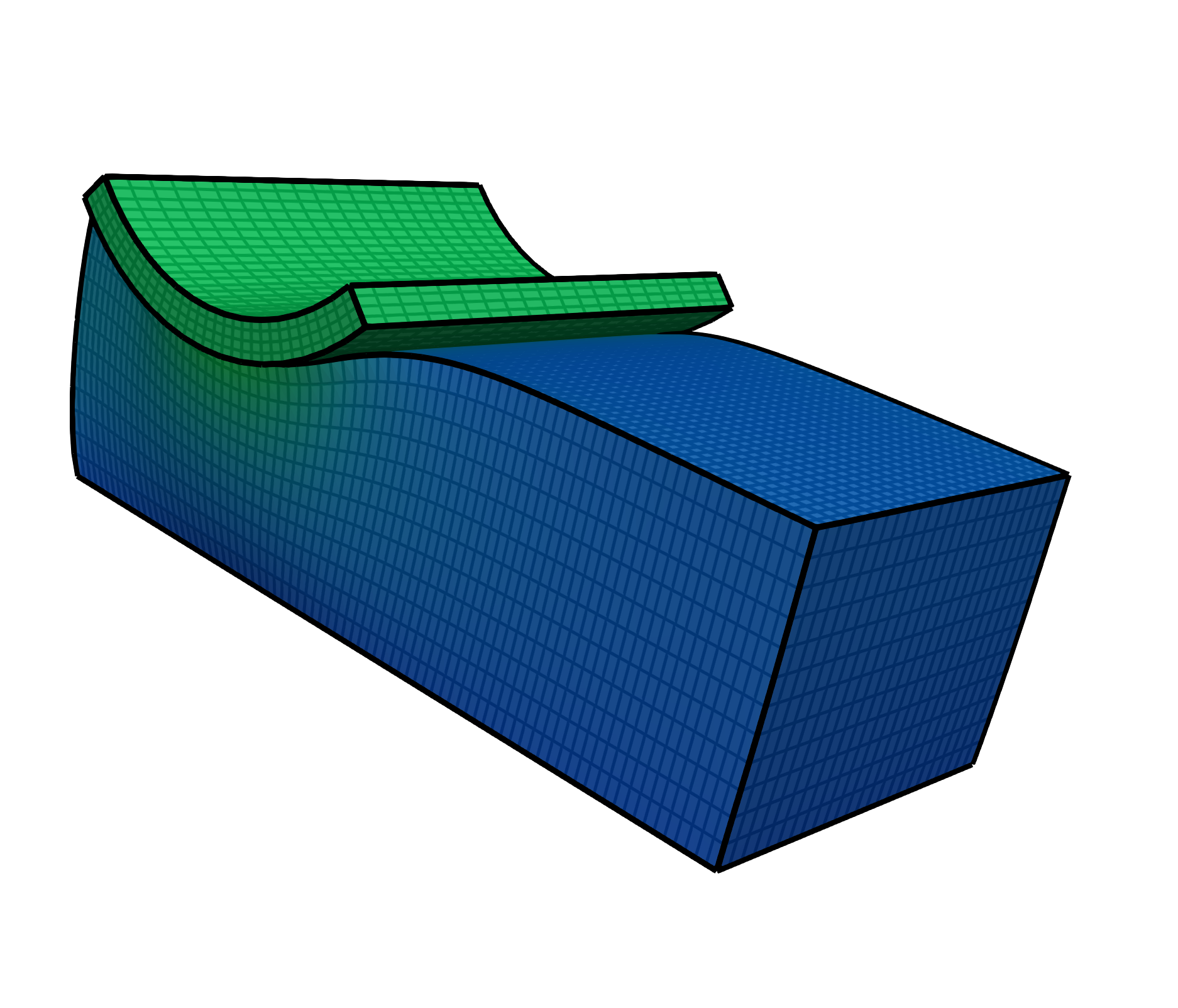}
    \end{subfigure}
        \begin{subfigure}{0.295\textwidth}
        \includegraphics[width=1.0\linewidth,trim=75 0 150 150, clip]{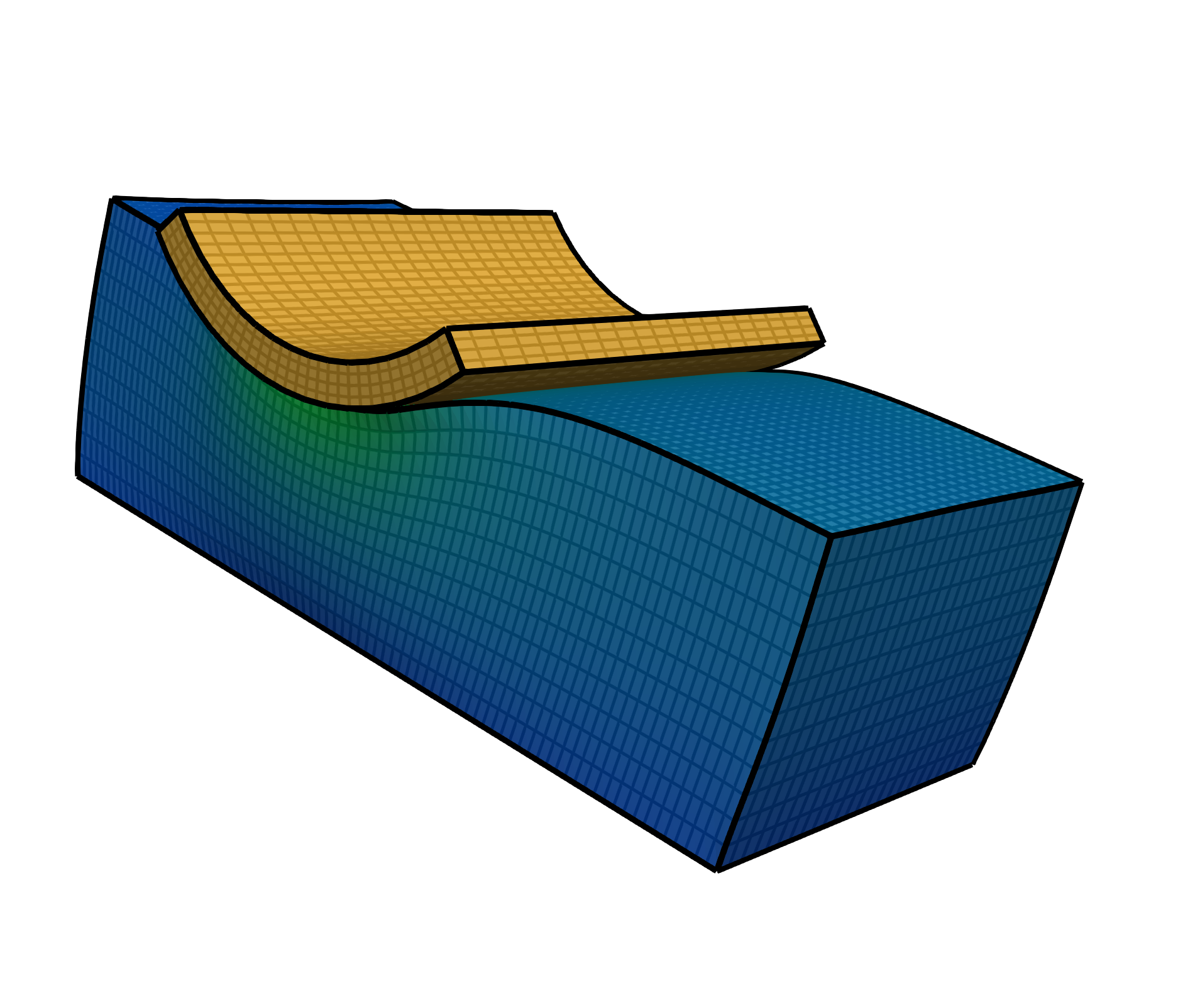}
    \end{subfigure}
    \begin{subfigure}{0.295\textwidth}
        \centering
        \includegraphics[width=1.0\linewidth,trim=75 0 150 150, clip]{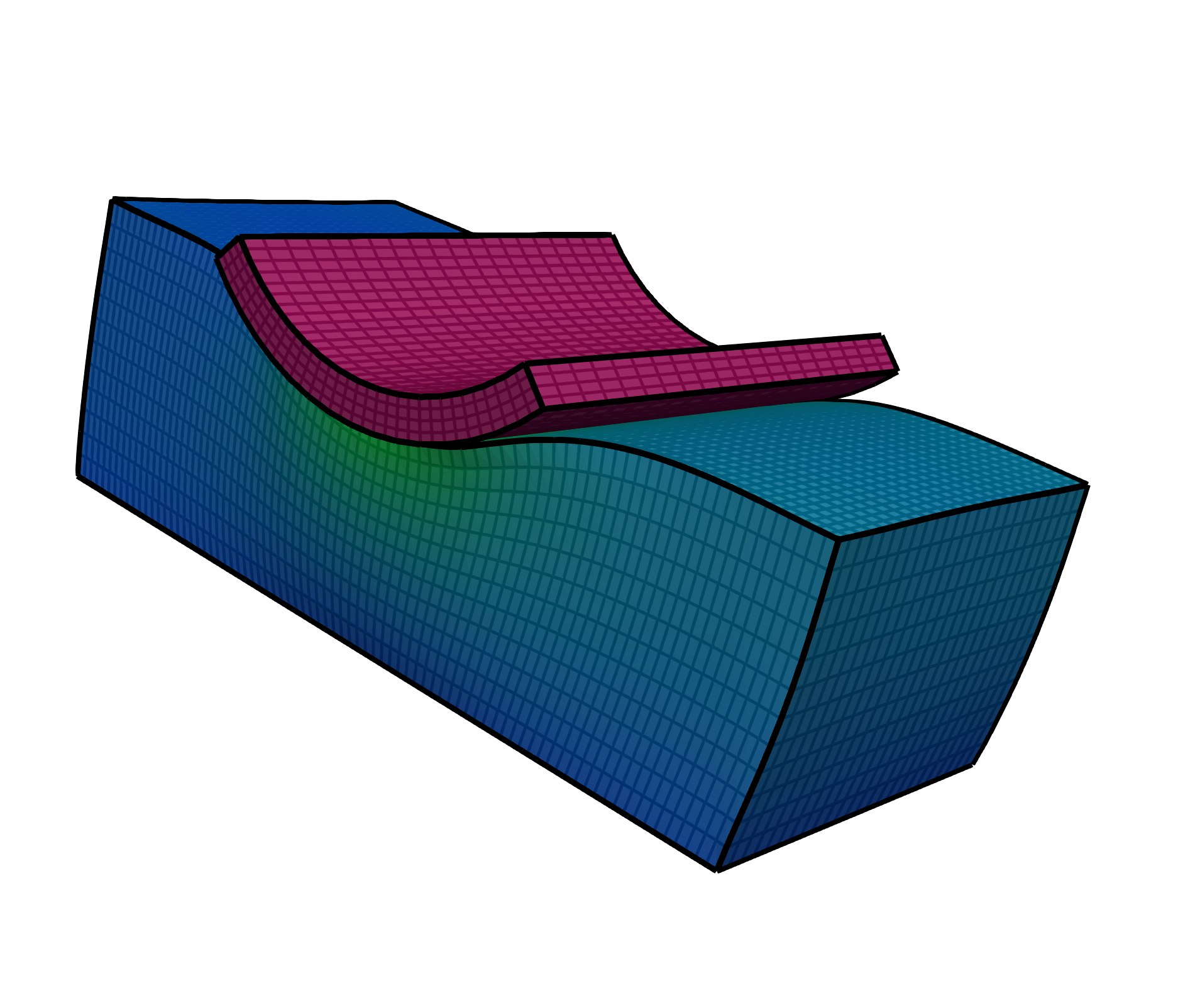}
    \end{subfigure}
    \begin{subfigure}{0.075\textwidth}
        \centering
        \includegraphics[width=1.5\linewidth, trim=600 0 250 300, clip]{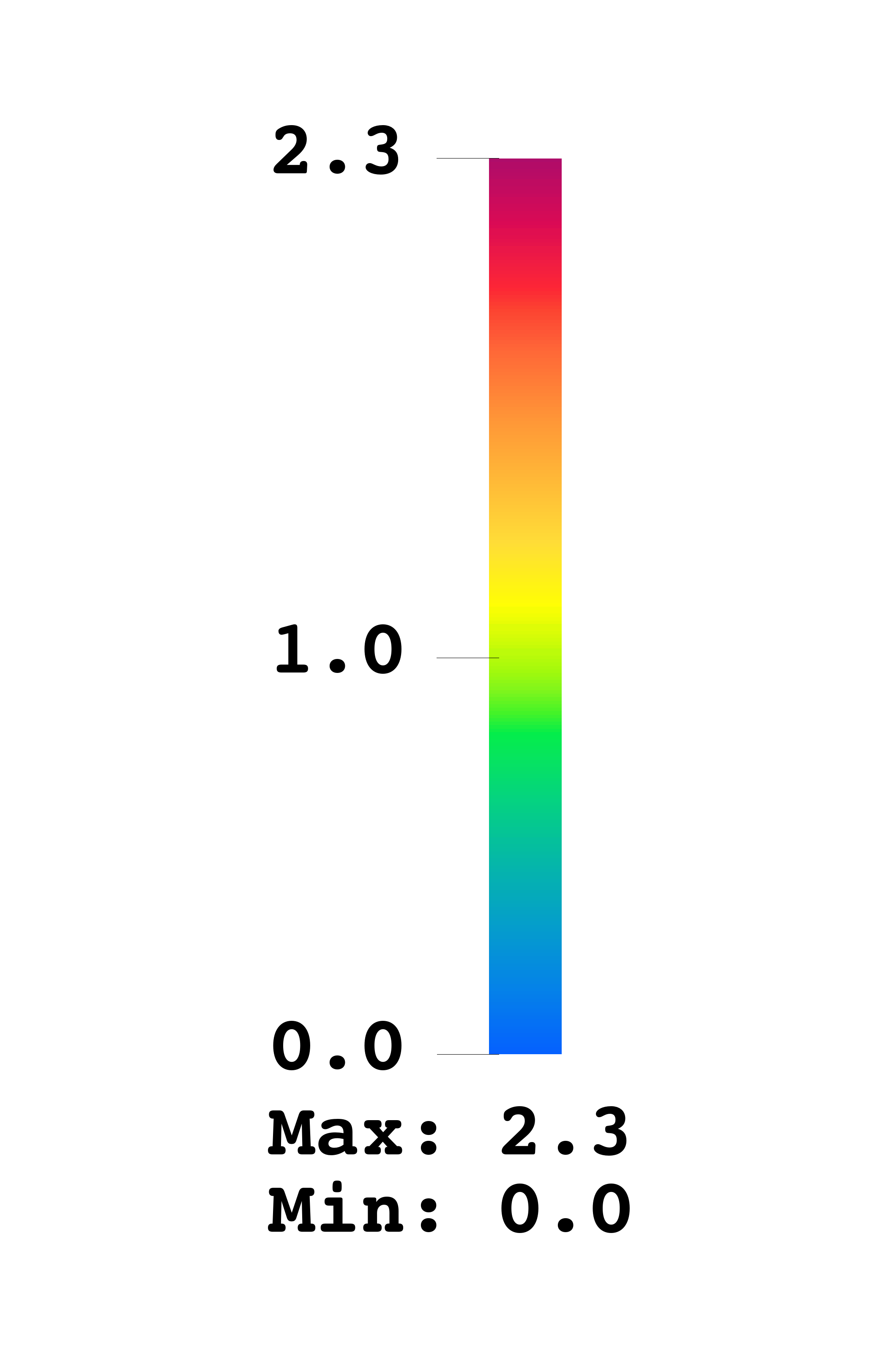}
    \end{subfigure}\hfill
    \caption{Deformed configurations and displacement magnitudes at time steps \(t_3,t_7,t_{10}\).}
    \label{fig:ex51_deformation}
\end{figure}
The computed displacement solutions and the corresponding deformed configurations are shown in \cref{fig:ex51_deformation}. The final configuration is also shown in \cref{fig:testNo5_diagram-final}. The IP inner linear system \cref{eq:linsystem1} is solved using the AMGF--PCG solver. As in the previous example \(\text{tol}_\text{\tiny PCG} = 10^{-10}\) and \(\text{tol}_\text{\tiny IP} =10^{-6}\).  The contact problem is solved across mesh refinement levels, ranging from a very coarse discretization with approximately \(13,000\) DOFs up to a fine mesh with roughly \(300\) million DOFs. In \Cref{tab:problem_size_test5} we report \(\tt n\), the total number of DOFs, as well as \(\tt n_c^{max}\) and \(\tt m^{max}\), the maximum number of DOFs in the contact subspace and the maximum number of contact constraints encountered across all time steps, respectively. We also include \(\tt{k}_{IP}^{avg}\), the average number of IP iterations over all time steps, and \(\tt k_{AMGF}^{avg}\) the average AMGF--PCG iteration number across all time and optimization steps. The numbers in parentheses indicate the corresponding iteration count upper bounds derived by \cref{eq:practical_bound}.
\begin{table}[H]
    \centering
    \scalebox{1}{
    \begin{tabular}{lrrrrrrr}
        \hline
        \hline
                                  && Mesh 1 & Mesh 2 & Mesh 3  & Mesh 4 & Mesh 5 & Mesh 6 \\
        \hline
        \(\tt n\)                 && 12,876 & 89,370 & 663,630 & 5,110,038 & 40,096,806  & 317,665,350 \\
       \(\tt n_{\tt c}^{max}\)    && 1,419  & 5,127  & 19,404  & 75,402          & 297,384     & 1,180,218 \\
       \(\tt m^{max}\)            && 185    & 655    & 2,417   & 9,247          & 36,179     & 143,263  \\
        \hdashline
   \(\tt{k}_{IP}^{avg}\)          && 13     & 15     & 16      & 19          & 20     & 21  \\
 \(\tt k_{AMGF}^{avg}\)           && 73 (168)     & 83 (187)     & 97 (198)      & 124 (238)       & 160 (298)  & 195 (384)  \\
        \hline
    \end{tabular}}
    \caption{{\bf Solver iteration counts for the ironing contact problem across mesh refinement levels.} Starting from the initial coarse mesh (Mesh 1), each subsequent mesh is obtained by uniformly refining the previous one. Here \(\tt n\) denotes the total number of DOFs in the solution space \(\mathbb{U}\), while \(\tt n_c^{max}\) and \(\tt m^{max}\) represent the maximum dimension of the contact subspace \(\mathbb{W}\) and the maximum number of contact constraints encountered across all time steps, respectively. We also list \(\tt{k}_{IP}^{avg}\), the average number of IP iterations over all time steps. The last row reports \(\tt{k}_{AMGF}^{avg}\), the average number of iterations required by AMGF--PCG solver across all optimization and time steps to solve the contact problem. The numbers in parentheses indicate the corresponding computed AMGF-PCG iteration count upper bounds derived by the condition number estimate \cref{eq:practical_bound}.}
    \label{tab:problem_size_test5}
\end{table}
\begin{figure}[H]
    \centering
    \begin{subfigure}{0.32\textwidth}
        \centering
        \includegraphics[width=0.99\linewidth]{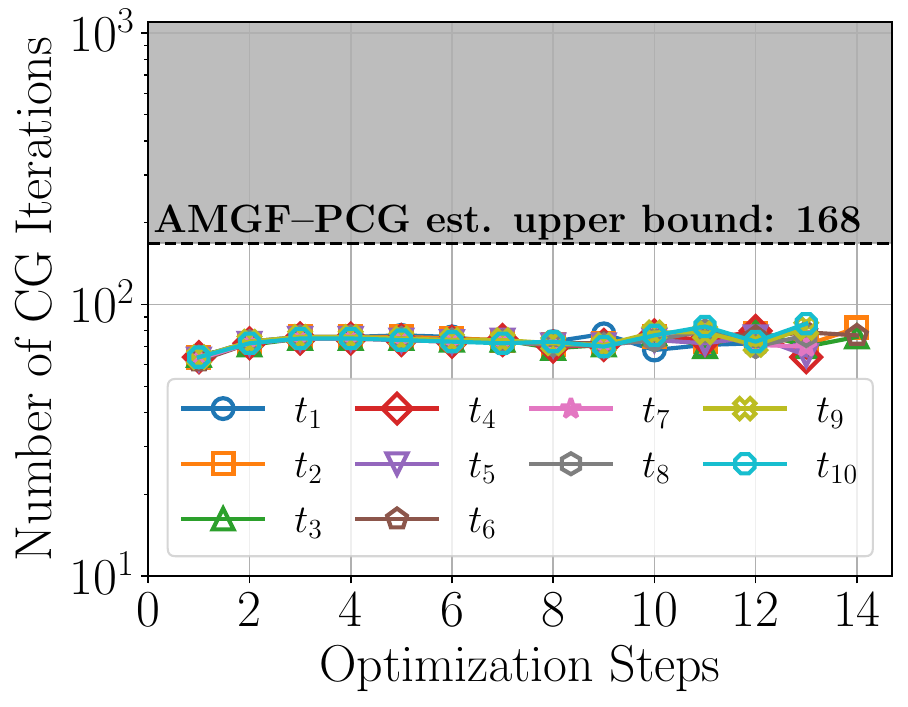}
        \caption{Mesh 1: 12,876 DOFs}
        \label{fig:testNo51_ref1}
    \end{subfigure}
    \begin{subfigure}{0.32\textwidth}
        \centering
        \includegraphics[width=0.99\linewidth]{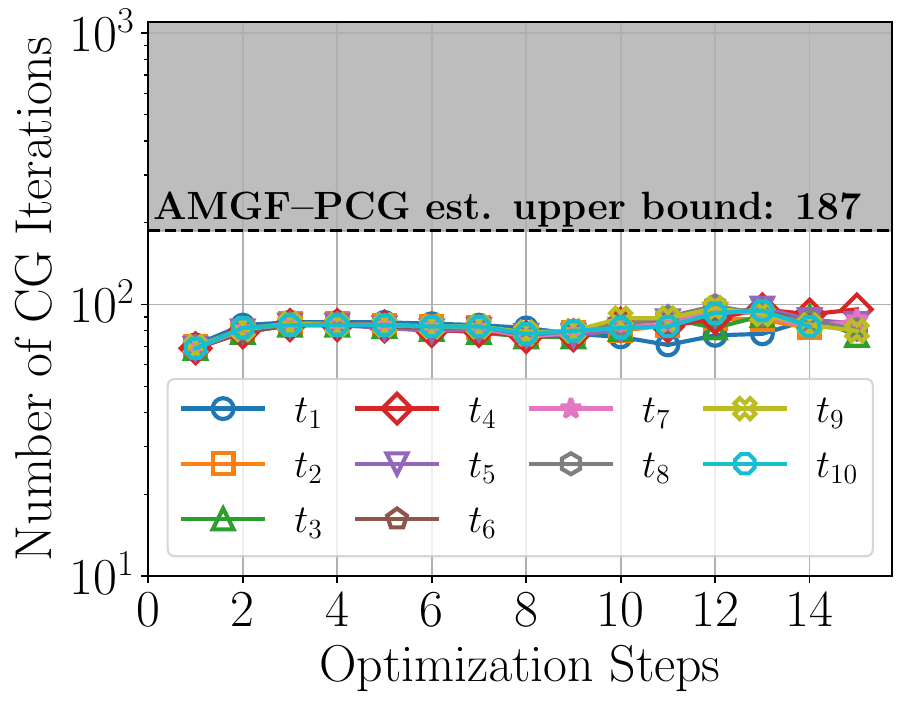}
        \caption{Mesh 2: 89,370 DOFs}
        \label{fig:testNo51_ref2}
    \end{subfigure}
        \begin{subfigure}{0.32\textwidth}
        \centering
        \includegraphics[width=0.99\linewidth]{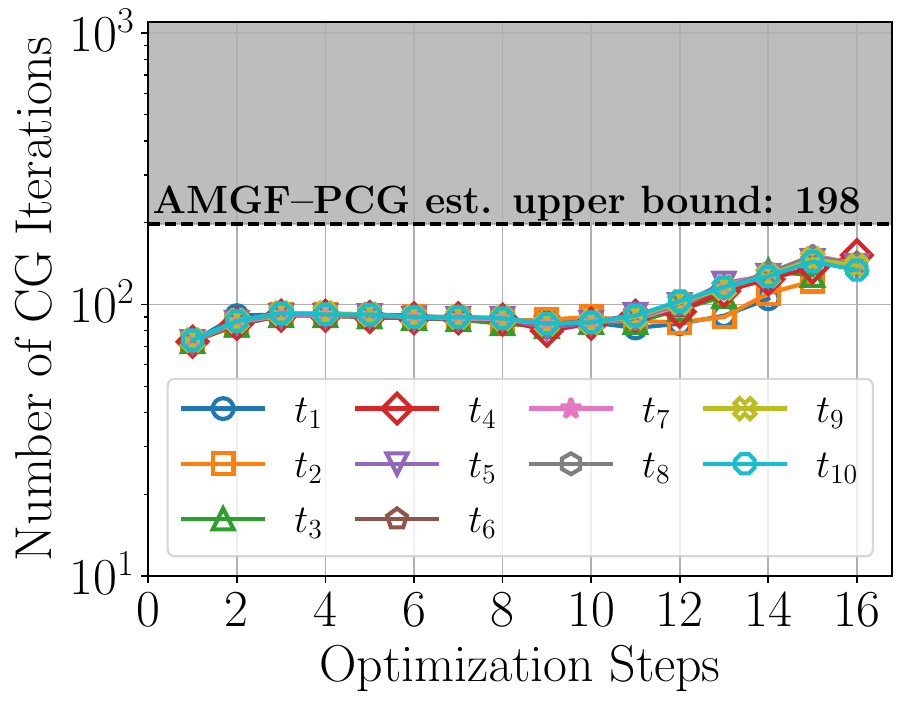}
        \caption{Mesh 3: 663,630 DOFs}
        \label{fig:testNo51_ref3}
    \end{subfigure}

    \begin{subfigure}{0.32\textwidth}
        \centering
        \includegraphics[width=0.99\linewidth]{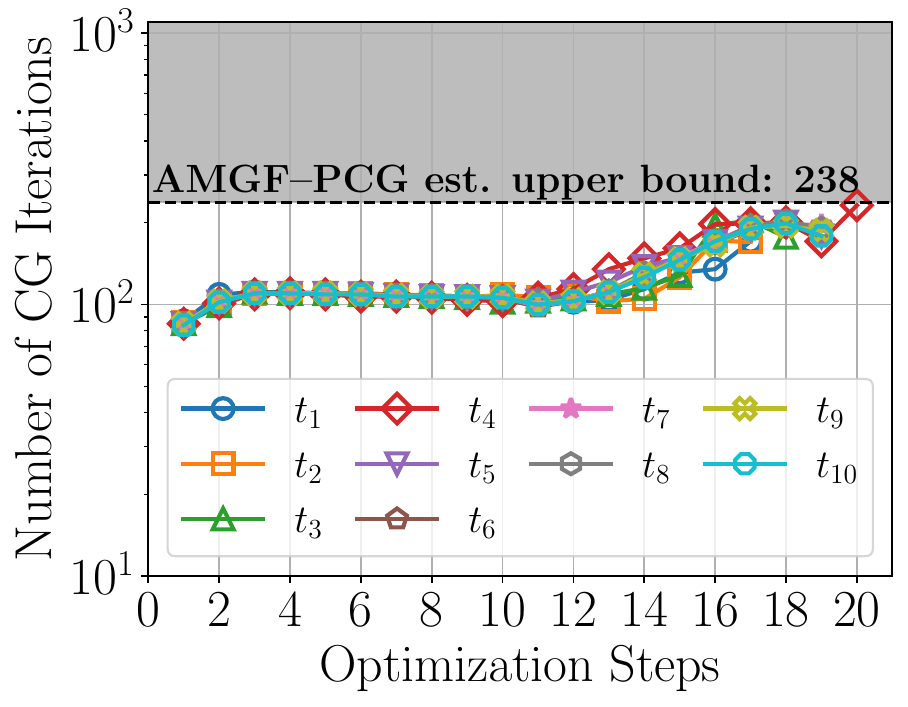}
        \caption{Mesh 4: 5,110,038 DOFs}
        \label{fig:testNo51_ref4}
    \end{subfigure}
    \begin{subfigure}{0.32\textwidth}
        \centering
        \includegraphics[width=0.99\linewidth]{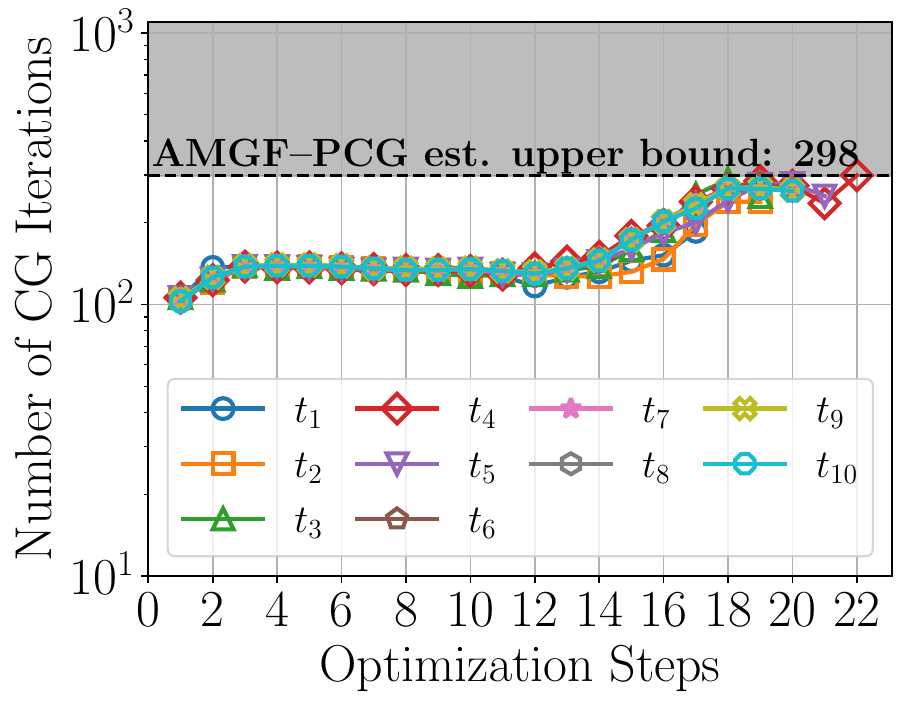}
        \caption{Mesh 5: 40,096,806 DOFs}
        \label{fig:testNo51_ref5}
    \end{subfigure}
        \begin{subfigure}{0.32\textwidth}
        \centering
        \includegraphics[width=0.99\linewidth]{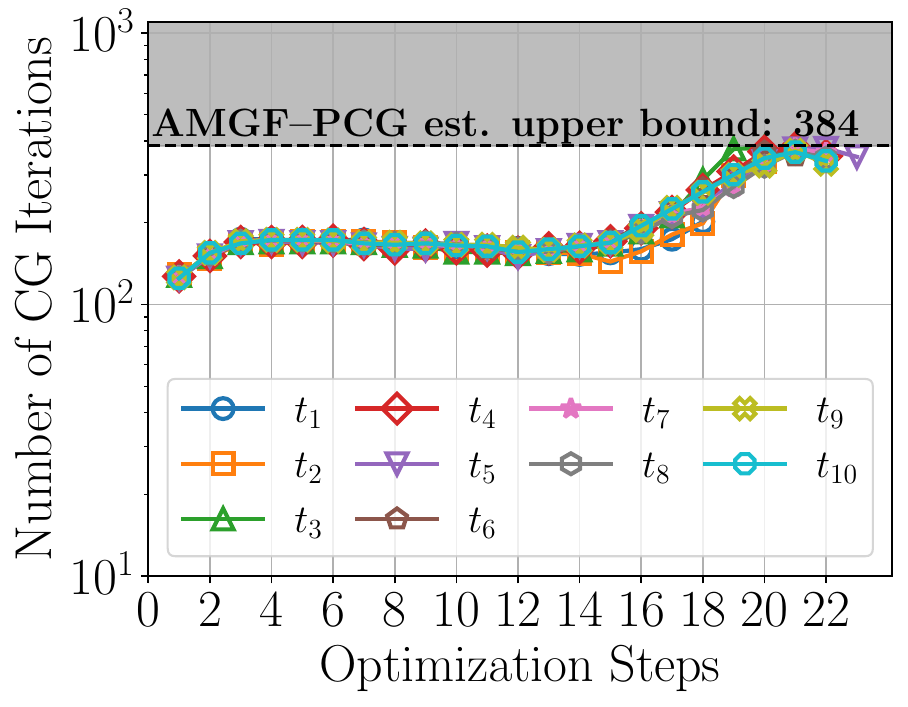}
        \caption{Mesh 6: 317,665,350 DOFs}
        \label{fig:testNo51_ref6}
    \end{subfigure}
    \caption{AMGF--PCG convergence results for the ironing problem. Each curve represents the AMGF--PCG iteration count throughout the IP optimization for time steps $t_i$, $i=1,\dots,10$. The horizontal line indicates the estimate of the upper bound derived from the theoretical condition number result \cref{eq:practical_bound} in \Cref{remark:upper_bound_estimate}, i.e., the number of iterations of the AMGF--PCG solver when applied to the contact problem is bounded by approximately \(\sqrt{2}\) times the AMG--PCG solver count when applied to the contact-free problem.}
    \label{fig:test51_results}
\end{figure}
\Cref{fig:test51_results} illustrates the convergence behavior of the AMGF--PCG solver across all mesh refinement levels for the ironing test. Each subplot shows iteration counts over optimization steps, with separate curves corresponding to different time steps. The observed trends are consistent with those of the two-block problem discussed in \Cref{section:Model_problem}, particularly in how mesh refinement and the activation of contact constraints affect solver performance. Notably, the influence of contact enforcement becomes more pronounced when the die (top body) begins to slide after the fourth time step. This results in a moderate increase in the number of solver iterations as the IP method approaches convergence, suggesting a temporary worsening of the system's condition number. However, the number of iterations never exceeds the estimated upper bound, thereby validating the theoretical condition number estimate from \Cref{section:precon}. Finally, we omit iteration counts for the standard AMG--PCG solver in this figure, as it frequently fails to converge within 5,000 iterations for this problem.

\subsection{Beam-sphere problem}
\label{sub:beam_sphere_problem}
In this final example, we consider a contact problem involving a layered spherical object compressed between two half-elliptical beams.  This is an idealized problem meant to represent many common features of contact analysis, including the transmission of force through layered contact constraints, contact geometries featuring compound curvature, and contact constraints which induce local structural bifurcation. The geometry consists of four elastic bodies: two elongated half-elliptical beams, a hollow spherical shell, and a nested hollow oblate spheroidal shell. The beams are aligned with the z-axis, have elliptical bases and together they extend from \(z=-2.5\) to \(z=2.5\). At \(z=0\) they rest atop a hollow spherical shell, which is centered at the origin and has outer and inner radii of \(1\) and \(0.8\), respectively. Enclosed within this sphere is a hollow oblate spheroidal shell with outer semi-axes \(a = 0.775\), \(b = 0.8\) and \(c = 0.8\) (in the \(x\),\(y\) and \(z\) directions, respectively) and inner semi-axes \(a = 0.625\), \(b = 0.695\) and \(c = 0.68\).
\begin{figure}[H]
    \centering
    \begin{subfigure}[t]{0.32\textwidth}
        \centering
        \includegraphics[width=1.15\linewidth]{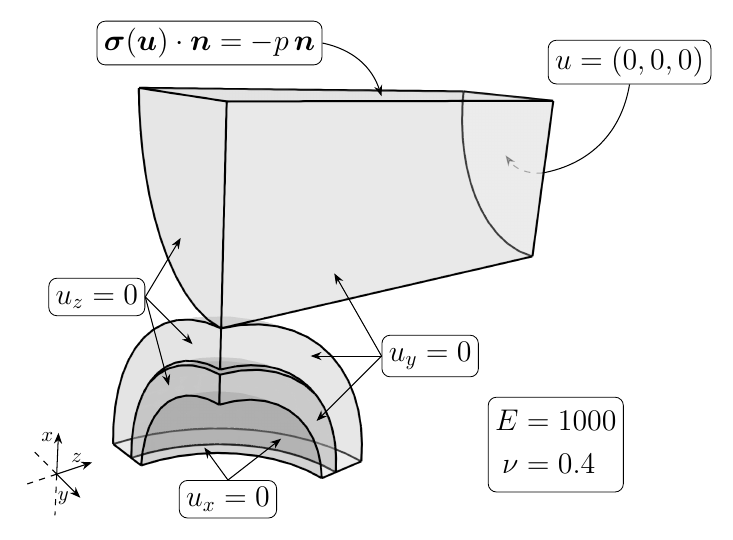}
        \caption{Initial configuration}
        \label{fig:test6_diagram}
    \end{subfigure}
    \begin{subfigure}[t]{0.32\textwidth}
        \centering
        \includegraphics[width=0.98\linewidth]{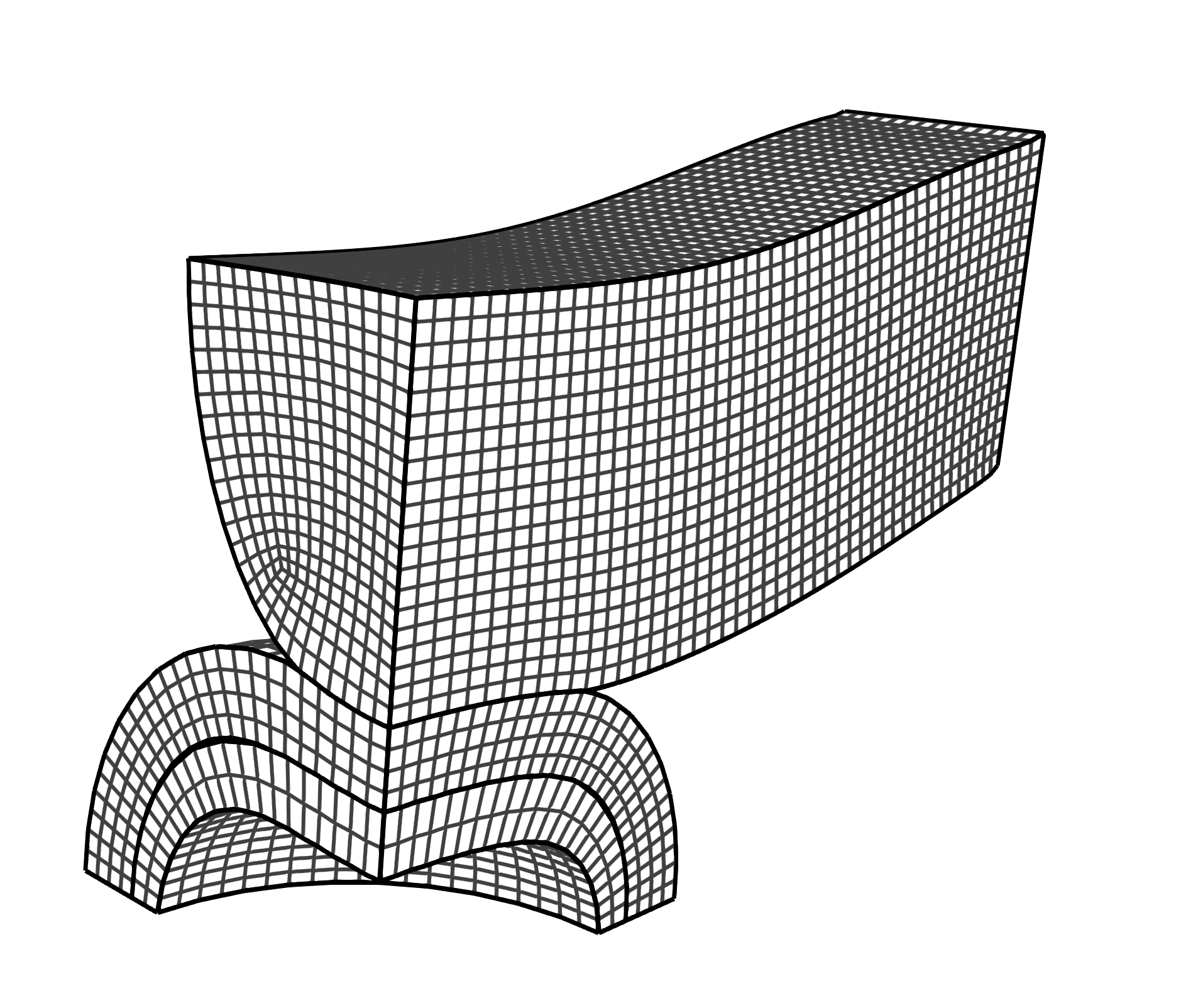}
        \caption{Linear model}
        \label{fig:test6lin_final_mesh}
    \end{subfigure}
    \begin{subfigure}[t]{0.32\textwidth}
    \centering
    \includegraphics[width=0.98\linewidth]{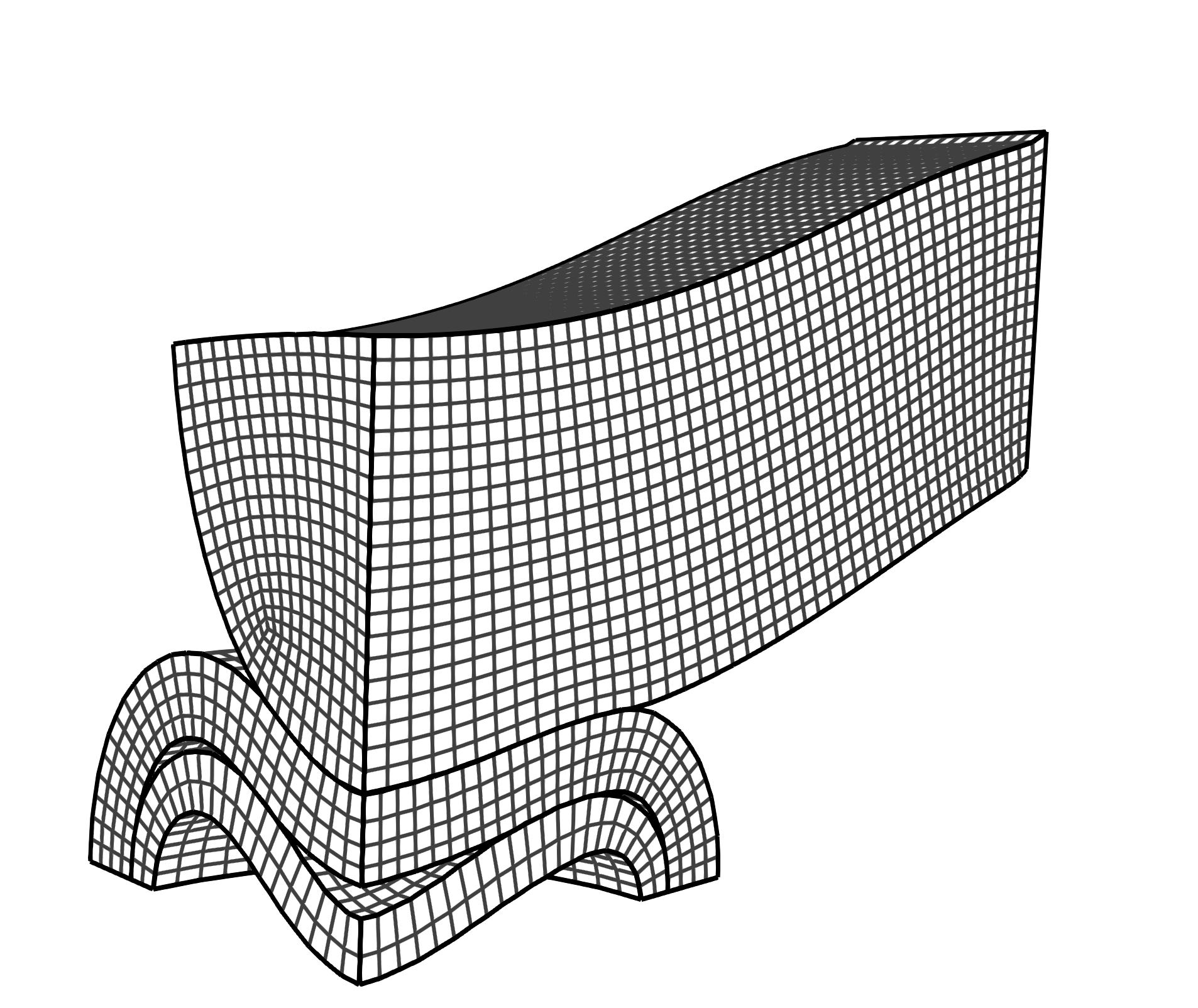}
    \caption{Nonlinear model}
    \label{fig:test6nl_final_mesh}
\end{subfigure}
\caption{Beam-sphere problem: An elongated beam rests on top of a hollow spherical shell. Enclosed within this sphere there is a hollow oblate spheroidal shell. The beam is fixed with a homogeneous Dirichlet BC on one end and symmetry conditions are applied at $x=0, y=0$ and $z=0$. A uniform compressive pressure of magnitude $30$ is applied at the top boundary surface of beam. The figure on the left shows the initial setup. The middle and right figures show the final configurations when using a linear and nonlinear material model, respectively. Note that only the nonlinear model is able to capture the material buckling.}
\end{figure}
In order to reduce computational cost, we exploit symmetry and simulate only the octant defined by \(x\ge0, y\le0, z\ge0\). To enforce symmetry, the following displacement BCs are applied: \(u_x=0\) at \(x = 0\), \(u_y = 0\) at \(y=0\) and \(u_z = 0\) at \(z = 0\). A uniform compressive pressure of magnitude \(p = 30\) is applied to the top surface of the beam by applying a Neumann BC \(\boldsymbol{\sigma}(\boldsymbol{u}) \cdot \boldsymbol{n} = - p \boldsymbol{n}\). The beam is fixed at \(z=2.5\) via a Dirichlet BC \(u=(0,0,0)\). Here, the bottom of the beam and the top of the inner shell are the mortar surfaces, while the top and bottom of the outer sphere act as the nonmortar surfaces.  The material parameters for all bodies correspond to a Young modulus \(E=1000\) and a Poisson's ratio \(\nu=0.4\). We test solver performance for both linear and nonlinear formulations of the problem. For the nonlinear formulation we use a compressible Neo-Hookean material which is already available in MFEM \cite{mfem1}. Note that in this example, buckling arises under compressive loading. The linear model fails to reproduce this behavior, while the nonlinear formulation is able to capture it (see \Cref{fig:test6lin_final_mesh,fig:test6nl_final_mesh}). 
For both formulations we use the same time-stepping process as in the previous examples, with six time steps. The Neumann BC is applied incrementally, i.e.,
\begin{equation*}
\boldsymbol{\sigma}(\boldsymbol{u}) \cdot \boldsymbol{n}|_{t_i} = - p \frac{i}{6} \, \boldsymbol{n}, \quad i=1,2, \dots, 6.
\end{equation*}
The displacement at each time step \(t_i\) generates an intermediate mesh used to update the gap function and its Jacobian. A new problem is then formulated on the original mesh, incorporating the next boundary values and the updated gap and Jacobian.
\begin{figure}[H]
\setlength{\abovecaptionskip}{-5pt} %
\setlength{\belowcaptionskip}{-15pt} %
    \centering
        \begin{subfigure}{0.295\textwidth}
        \centering
        \includegraphics[width=1.0\linewidth,trim=75 0 200 150, clip]{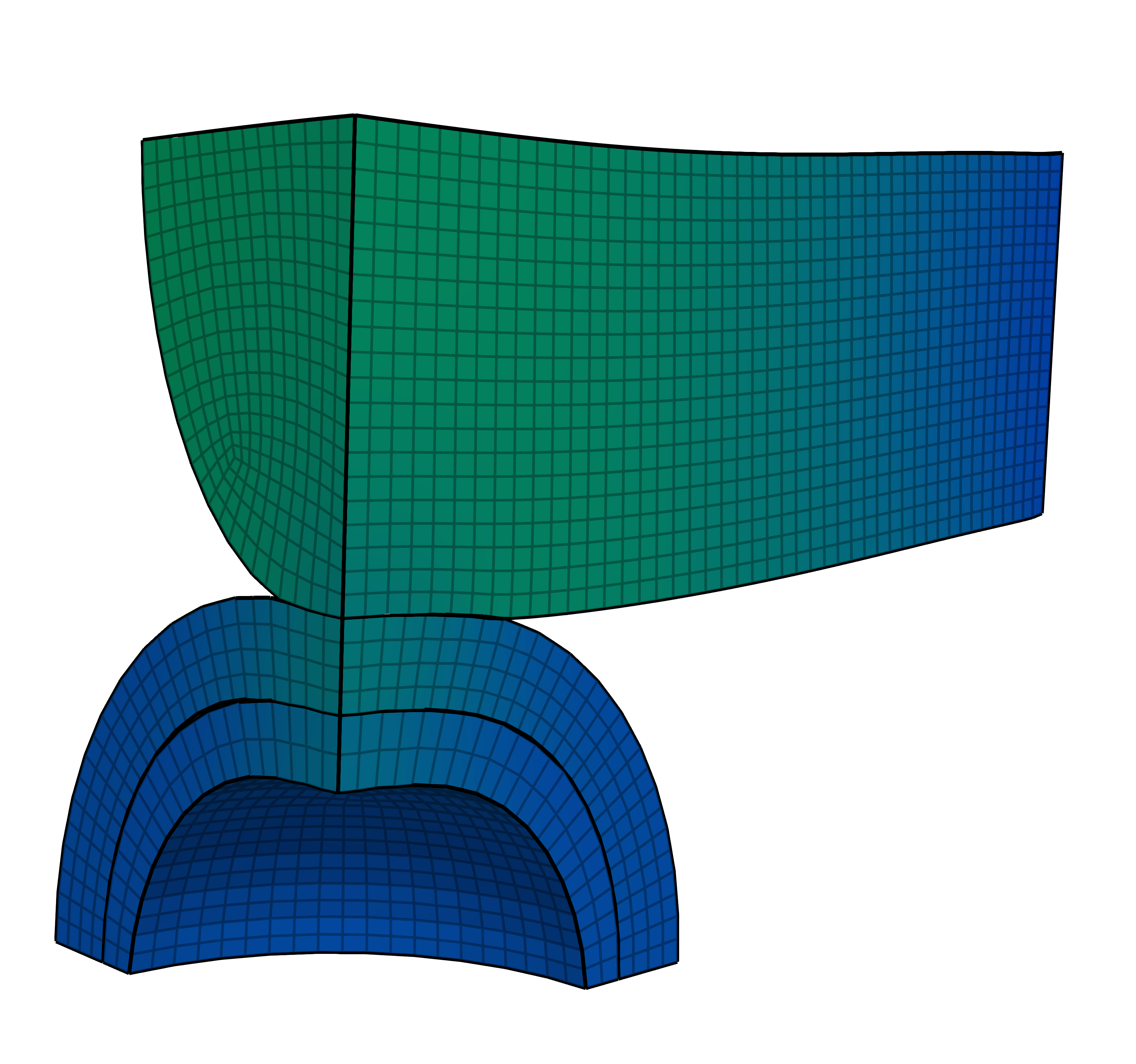}
    \end{subfigure}
        \begin{subfigure}{0.295\textwidth}
        \centering
        \includegraphics[width=1.0\linewidth,trim=75 0 200 150, clip]{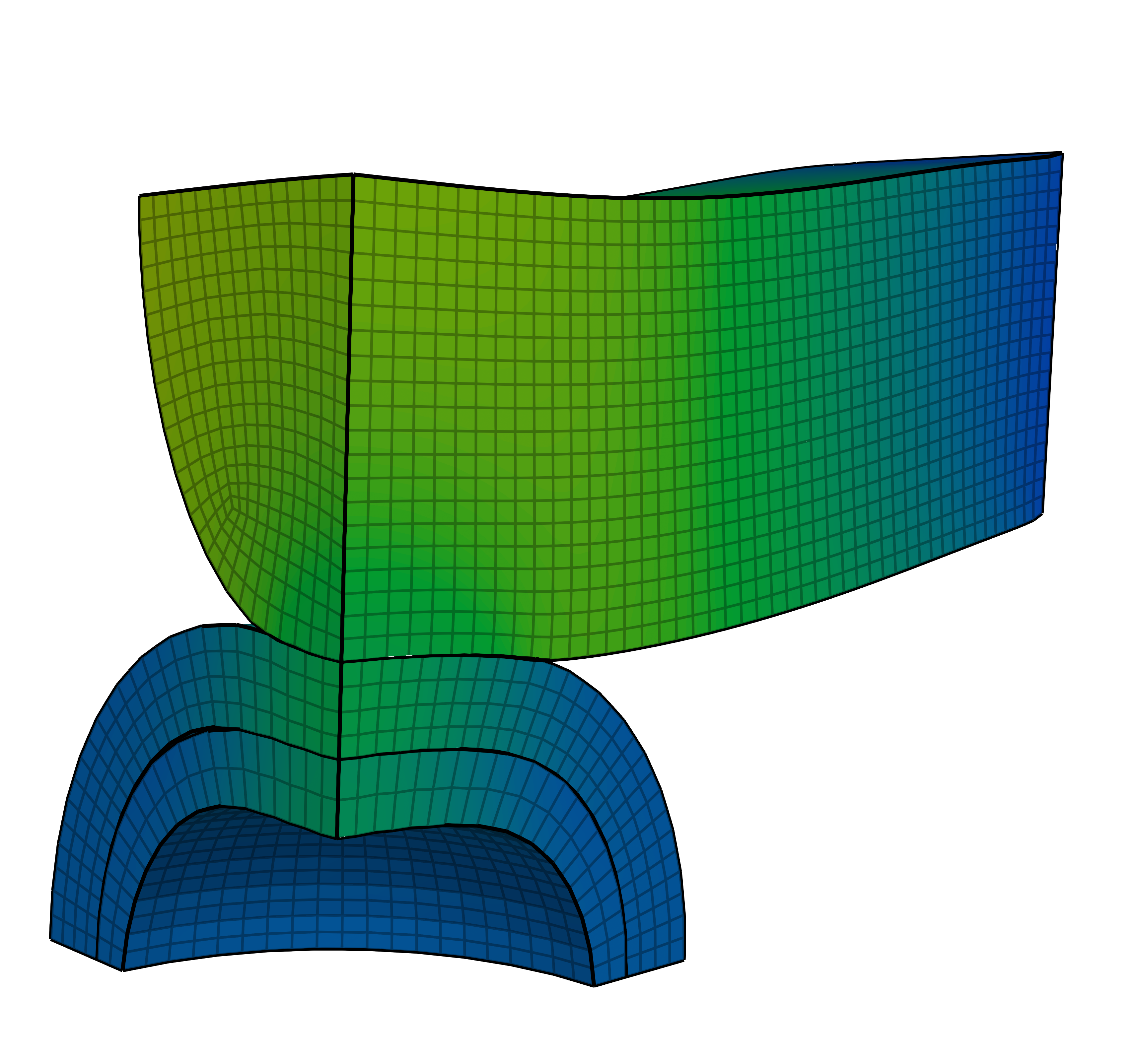}
    \end{subfigure}
    \begin{subfigure}{0.295\textwidth}
        \centering
        \includegraphics[width=1.0\linewidth,trim=75 0 200 150, clip]{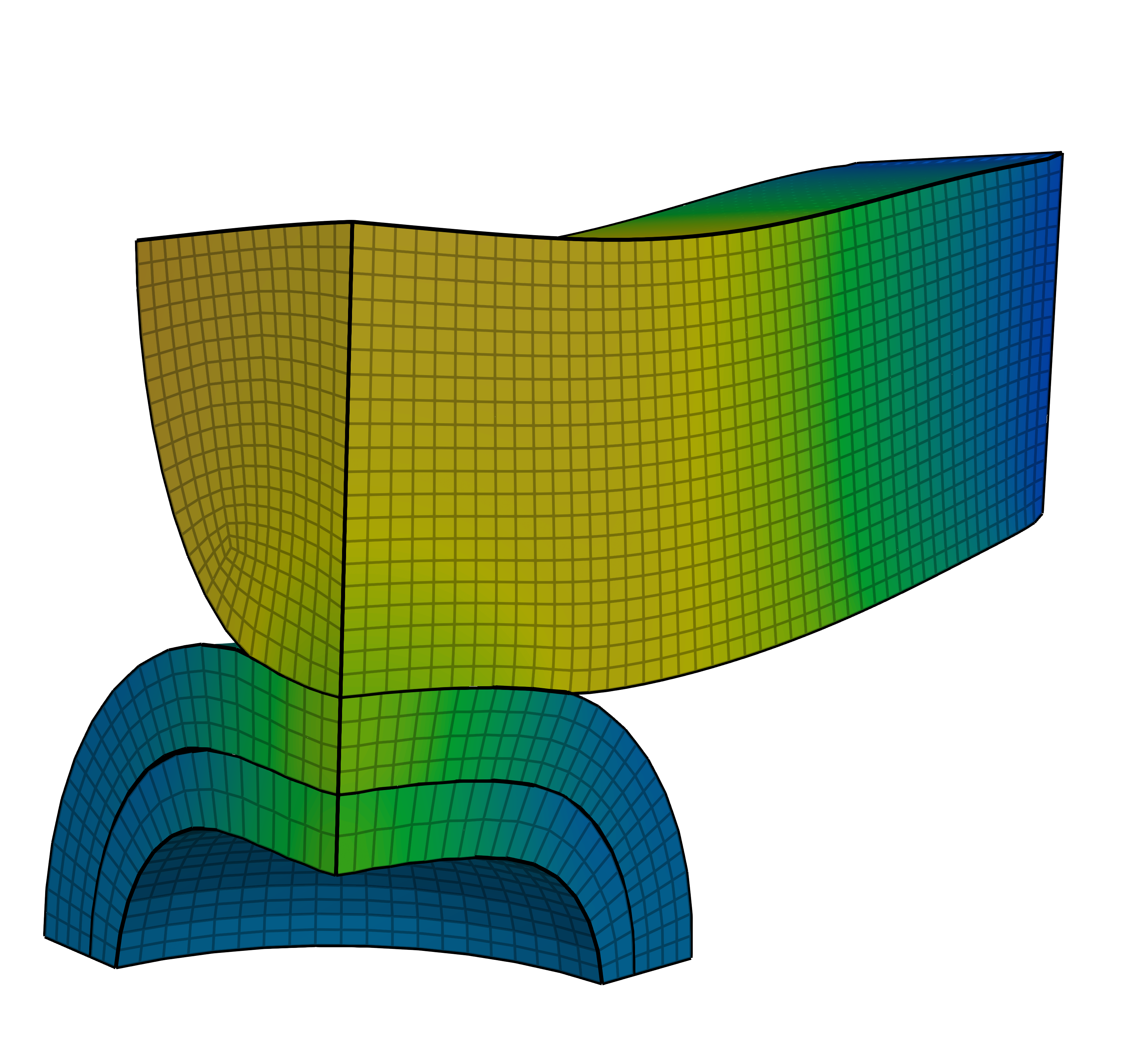}
    \end{subfigure}
    \begin{subfigure}{0.075\textwidth}
        \centering
        \includegraphics[width=1.5\linewidth,trim=600 0 250 300, clip]{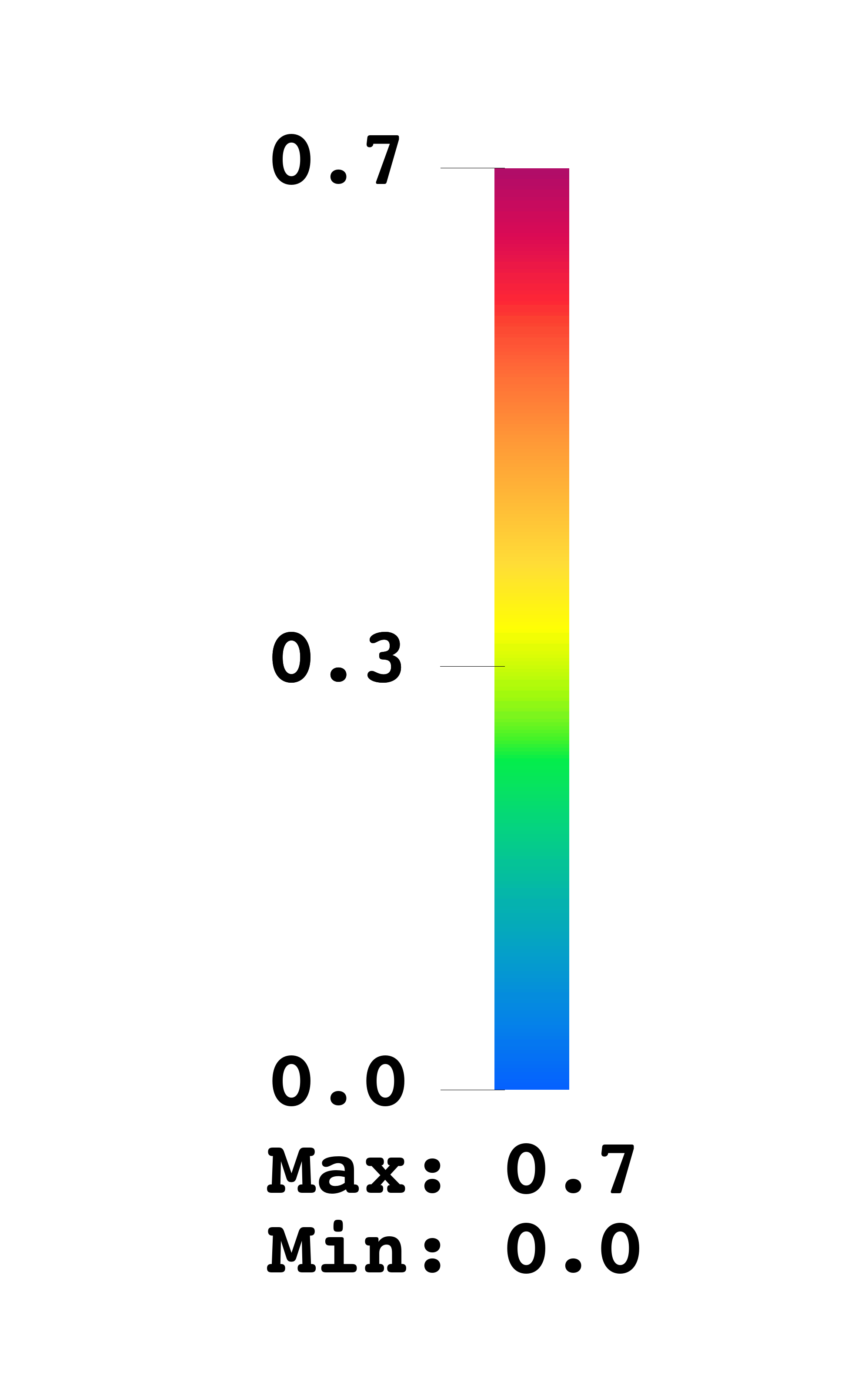}
    \end{subfigure}\hfill
    \caption{Linear model: deformed configurations and displacement magnitudes at time steps \(t_2,t_4,t_6\). Note that the linear model fails to capture material buckling.}
    \label{fig:ex6lin_deformation}
\end{figure}
\begin{figure}[H]
\setlength{\abovecaptionskip}{-2pt} %
    \centering
        \begin{subfigure}{0.295\textwidth}
        \centering
        \includegraphics[width=1.0\linewidth, trim=75 0 200 150, clip]{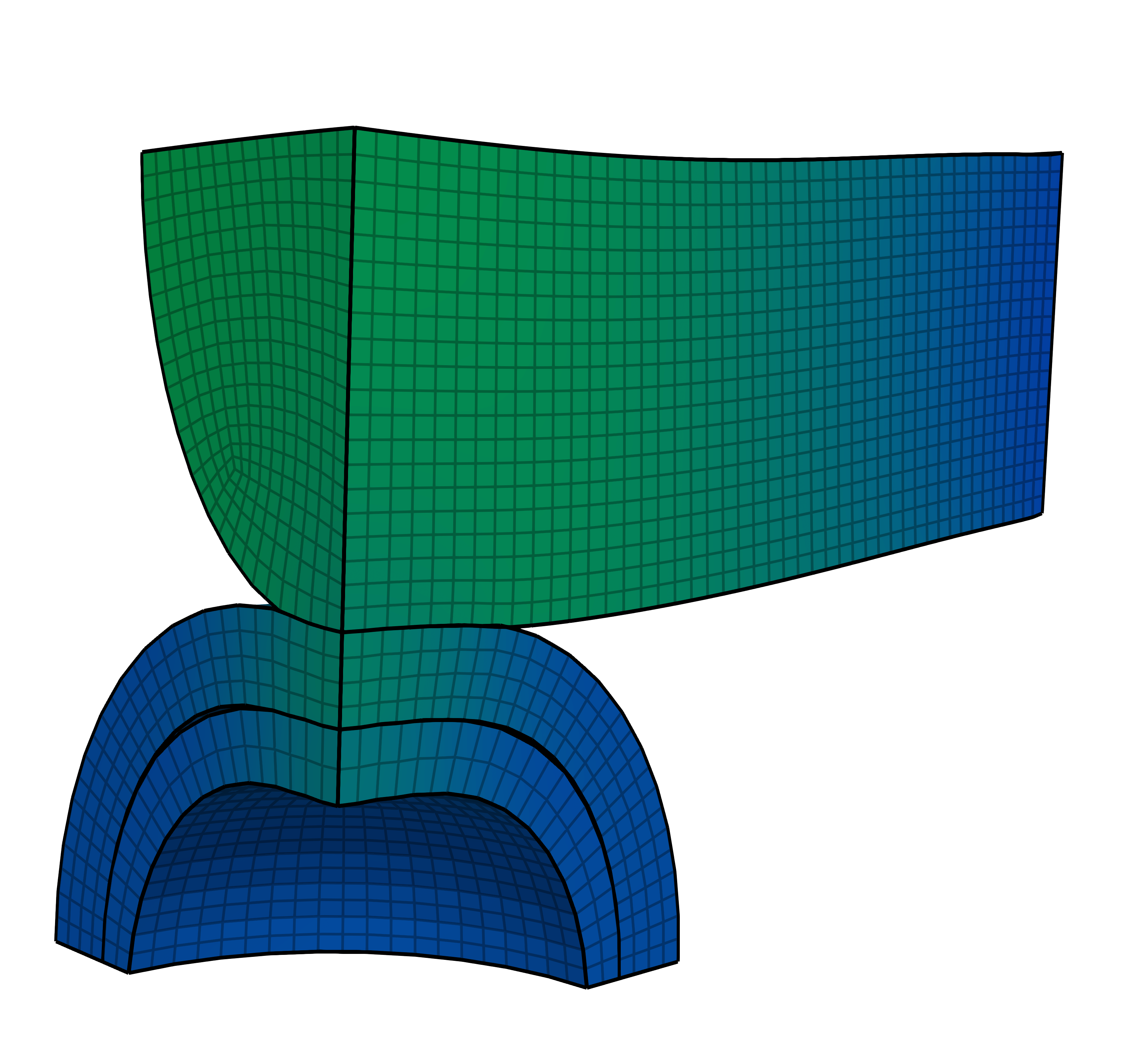}
    \end{subfigure}
        \begin{subfigure}{0.295\textwidth}
        \centering
        \includegraphics[width=1.0\linewidth, trim=75 0 200 150, clip]{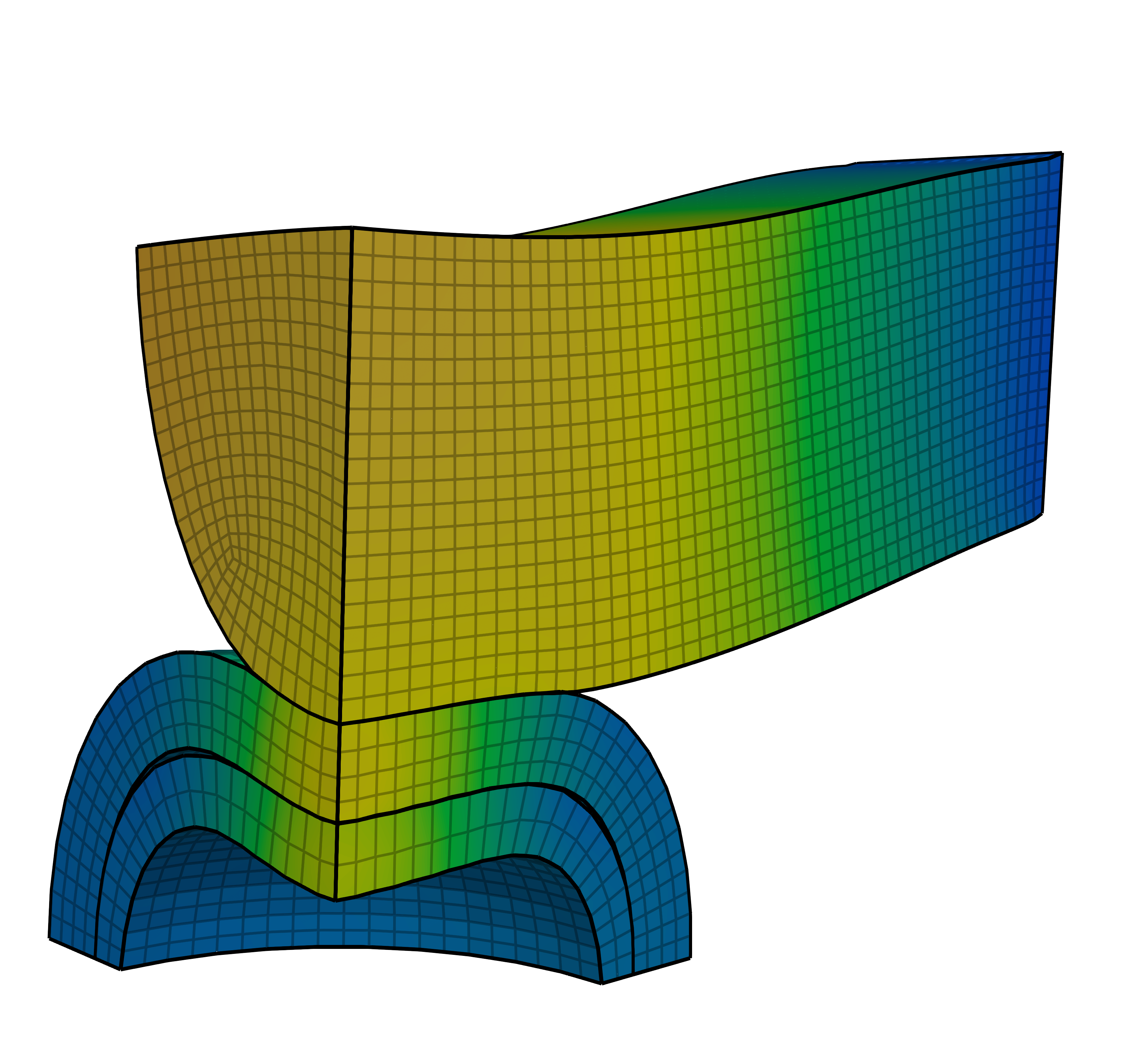}
    \end{subfigure}
    \begin{subfigure}{0.295\textwidth}
        \centering
        \includegraphics[width=1.0\linewidth, trim=75 0 200 150, clip]{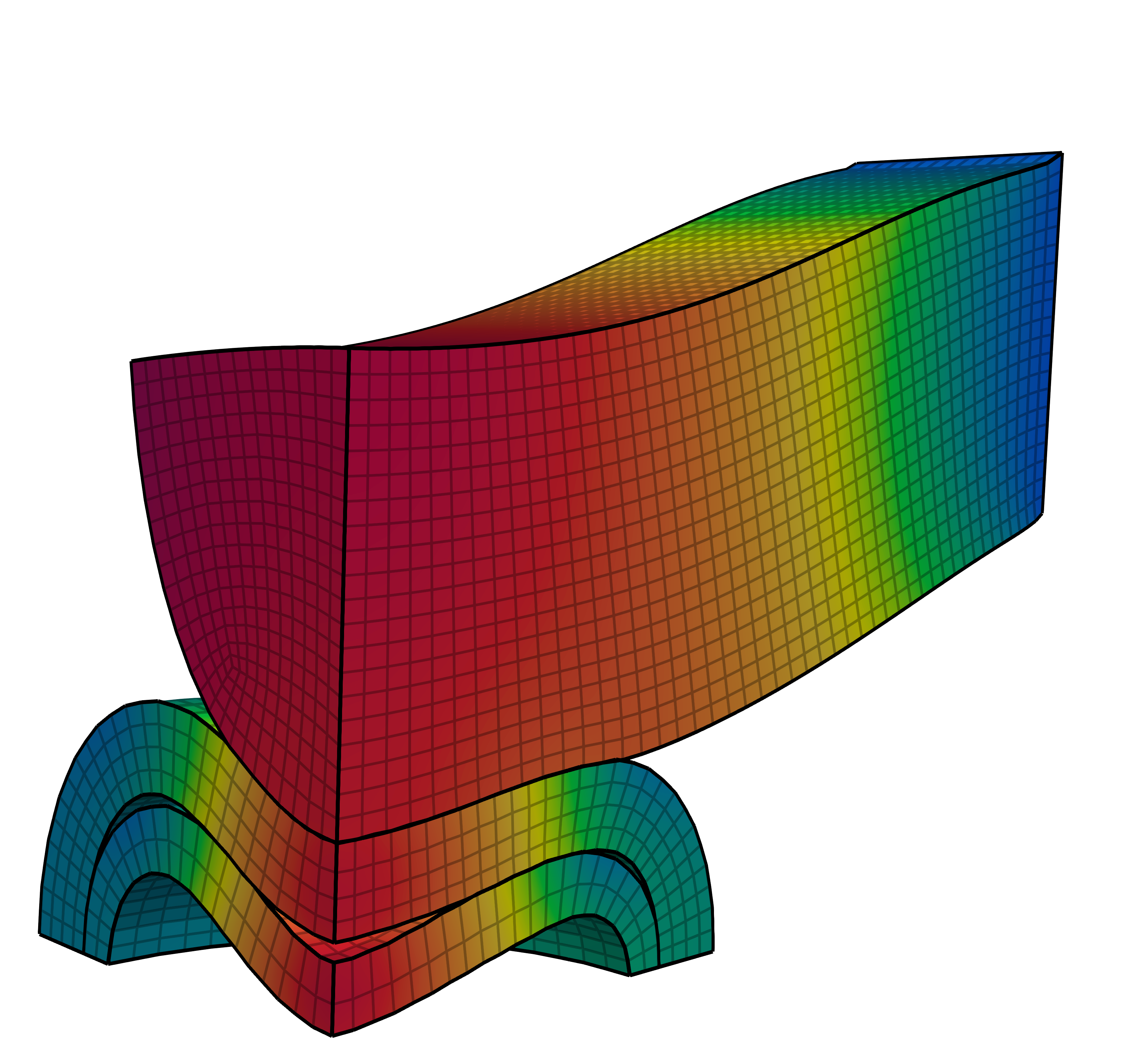}
    \end{subfigure}
    \begin{subfigure}{0.075\textwidth}
        \centering
        \includegraphics[width=1.5\linewidth, trim=600 0 250 300, clip]{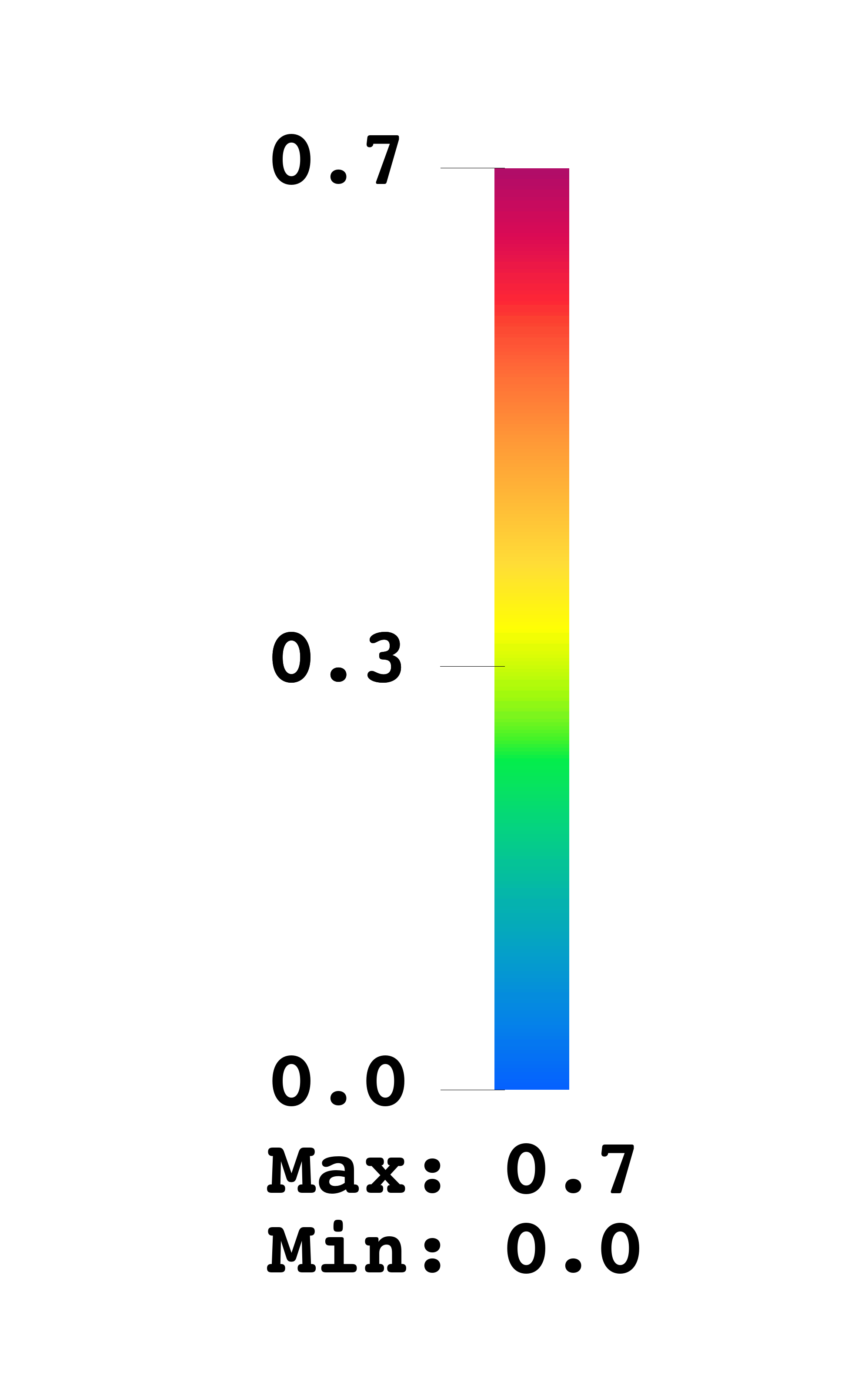}
    \end{subfigure}\hfill
    \caption{Nonlinear model: deformed configurations and displacement magnitudes at time steps \(t_2,t_4,t_6\). Note that the nonlinear model is able to capture material buckling.}
    \label{fig:ex6nl_deformation}
\end{figure}
In \Cref{fig:ex6lin_deformation,fig:ex6nl_deformation}, the computed displacement magnitudes and corresponding deformed configurations are shown for the linear and nonlinear models, respectively. \Cref{tab:problem_size_test6lin,tab:problem_size_test6nl} summarize the solver performance for both models. As in the previous examples, we report \(\tt n\), the problem size for each refinement level, as well as \(\tt n_c^{max}\) and \(\tt m^{max}\), the maximum number of DOFs in the contact subspace and the maximum number of contact constraints across all time steps, respectively. For each refinement level, we report \(\tt{k}_{IP}^{avg}\), the average IP iteration count over all time steps, and \(\tt{k}_{AMGF}^{avg}\) the average AMGF--PCG iterations over all time and optimization steps. For the linear model, values in parentheses indicate the corresponding upper bounds predicted by \cref{eq:practical_bound}. Note that in the nonlinear case, the system matrix changes over time, leading to varying condition numbers and no single representative bound.
\begin{table}[H]
    \centering
    \scalebox{1}{
    \begin{tabular}{lrrrrrrr}
        \hline
        \hline
                                           && Mesh 1 & Mesh 2 & Mesh 3  & Mesh 4 & Mesh 5 & Mesh 6 \\
        \hline
        \(\tt n\)                          && 7,902 & 53,547& 392,661 & 3,004,161 & 23,496,057  & 185,841,897 \\
        \(\tt n_c^{max}\)                  && 1,071 & 3,708 & 13,992  & 54,177    & 212,895     & 841,113 \\
        \(\tt m^{max}\)                    && 197   & 704   & 2,665    & 10,365    & 40,839      & 162,048    \\
        \hdashline
        \(\tt{k}_{IP}^{avg}\)              && 15    & 18    & 20      & 23        & 25          & 28  \\
        \(\tt{k}_{AMGF}^{avg}\)            && 15 (35)    & 19 (42)    & 24 (50)      & 30 (59)        & 41 (79)          & 50 (95)  \\
        \hline
    \end{tabular}}
\caption{{\bf Solver iteration counts for the linear beam-sphere contact problem across mesh refinements.} Starting from the initial coarse mesh (Mesh 1), each subsequent mesh is obtained by uniformly refining the previous one. Here \(\tt n\) denotes the total number of DOFs in the solution space \(\mathbb{U}\), while \(\tt n_c^{max}\) and \(\tt m^{max}\) represent the maximum dimension of the contact subspace \(\mathbb{W}\) and the maximum number of contact constraints encountered across all time steps, respectively. We also list \(\tt{k}_{IP}^{avg}\), the average number of IP iterations over all time steps. The last row reports \(\tt{k}_{AMGF}^{avg}\), the average number of iterations required by AMGF--PCG solver across all optimization and time steps to solve the contact problem. The numbers in parentheses indicate the corresponding computed AMGF--PCG iteration count upper bounds derived by the condition number estimate \cref{eq:practical_bound}.}
    \label{tab:problem_size_test6lin}
\end{table}
\begin{table}[H]
    \centering
    \scalebox{1}{
    \begin{tabular}{lrrrrrrr}
        \hline
        \hline
                                            && Mesh 1 & Mesh 2 & Mesh 3  & Mesh 4 & Mesh 5 & Mesh 6 \\
        \hline
        \(\tt n\)                           && 7,902 & 53,547& 392,661 & 3,004,161 & 23,496,057 & 185,841,897 \\
        \(\tt n_c^{max}\)                   && 1,128      & 3,873     & 14,715  & 56,970    & 223,764    & 883,725 \\
         \(\tt m^{max}\)                    && 210      & 737      & 2,809   & 10,911    & 42,991   & 170,539  \\
        \hdashline
    \(\tt{k}_{IP}^{avg}\)                   && 17     & 19      & 21      & 24        & 27           & 31  \\
    \(\tt{k}_{AMGF}^{avg}\)                 && 17     & 20      & 24      & 31        & 42           & 51  \\
        \hline
    \end{tabular}}
\caption{{\bf Solver iteration counts for the nonlinear beam-sphere problem across mesh refinement levels.} Starting from the initial coarse mesh (Mesh 1), each subsequent mesh is obtained by uniformly refining the previous one. Here \(\tt n\) denotes the total number of DOFs in the solution space \(\mathbb{U}\), while \(\tt n_c^{max}\) and \(\tt m^{max}\) represent the maximum dimension of the contact subspace \(\mathbb{W}\) and the maximum number of contact constraints encountered across all time steps, respectively. We also list \(\tt{k}_{IP}^{avg}\), the average number of IP iterations over all time steps. The last row reports \(\tt{k}_{AMGF}^{avg}\), the average AMGF--PCG iteration count across all optimization and time steps.}
    \label{tab:problem_size_test6nl}
\end{table}
\Cref{fig:test6lin_results,fig:test6nl_results} present the AMGF--PCG iteration counts for the linear and nonlinear beam-sphere problems, respectively, across all mesh refinement levels. As in the previous examples, iteration counts grow moderately with refinement and as the optimization progresses, but remain stable overall. In the linear case, iteration counts remain well below the theoretical upper bound \cref{eq:practical_bound}. For the nonlinear problem, no global upper bound is shown due to the evolving system matrix, but iteration counts remain consistent with expectations and exhibit no signs of instability. Notably, during buckling events, the elastic energy's Hessian may lose positive definiteness, making the objective in \cref{eq:linelastmin_discrete} unbounded. To ensure a well-posed problem, we impose bound constraints on displacement, 
$-\delta\leq \sf{u}-\sf{u}_{\star}\leq \delta$, for $\delta\in\mathbb{R}^{n}_{>0}$ (see \Cref{remark:boundconstr}). These bounds constraints are enabled after the fourth time step and are chosen based on displacements observed in earlier steps. These results further demonstrate the robustness of AMGF in both linear and nonlinear settings effectively handling contact constraints and large deformations while maintaining bounded iteration growth.
\begin{figure}[H]
    \centering
    \begin{subfigure}{0.32\textwidth}
        \centering
        \includegraphics[width=0.99\linewidth]{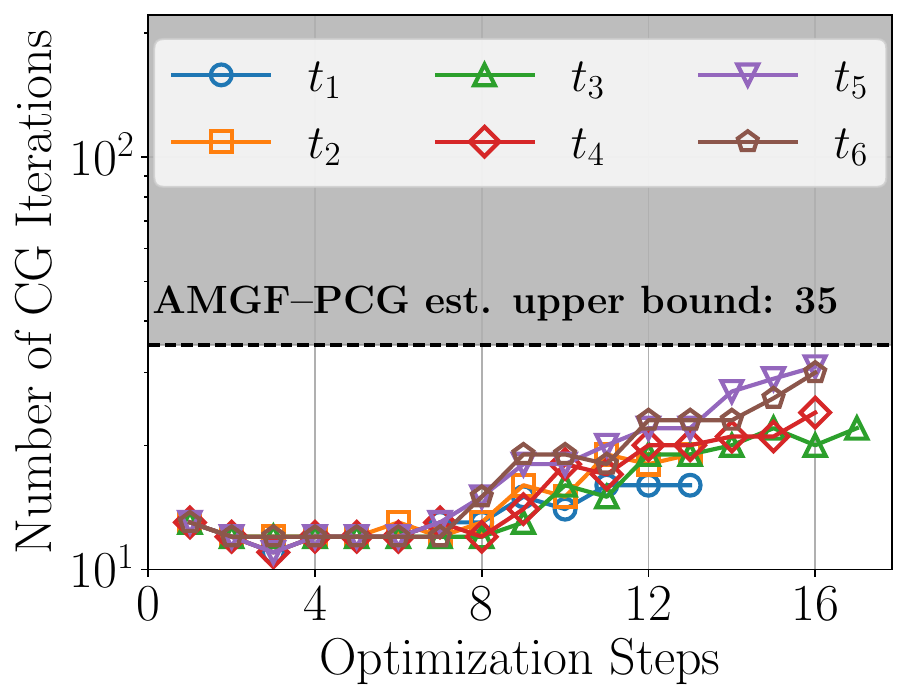}
        \caption{Mesh 1: 7,902 DOFs}
        \label{fig:testNo6lin_ref0}
    \end{subfigure}
    \begin{subfigure}{0.32\textwidth}
        \centering
        \includegraphics[width=0.99\linewidth]{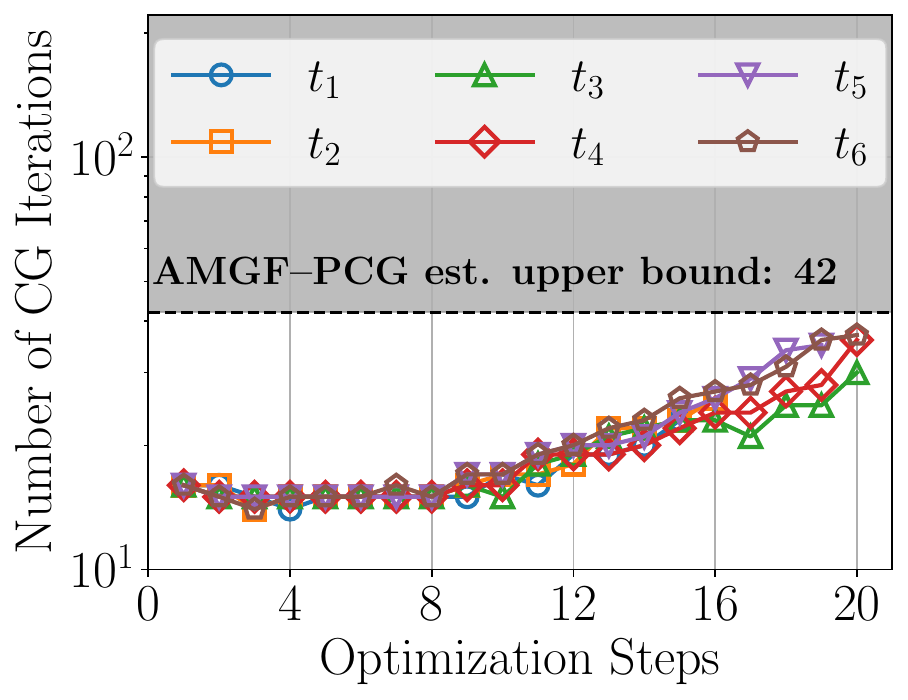}
        \caption{Mesh 2: 53,547 DOFs}
        \label{fig:testNo6lin_ref1}
    \end{subfigure}
        \begin{subfigure}{0.32\textwidth}
        \centering
        \includegraphics[width=0.99\linewidth]{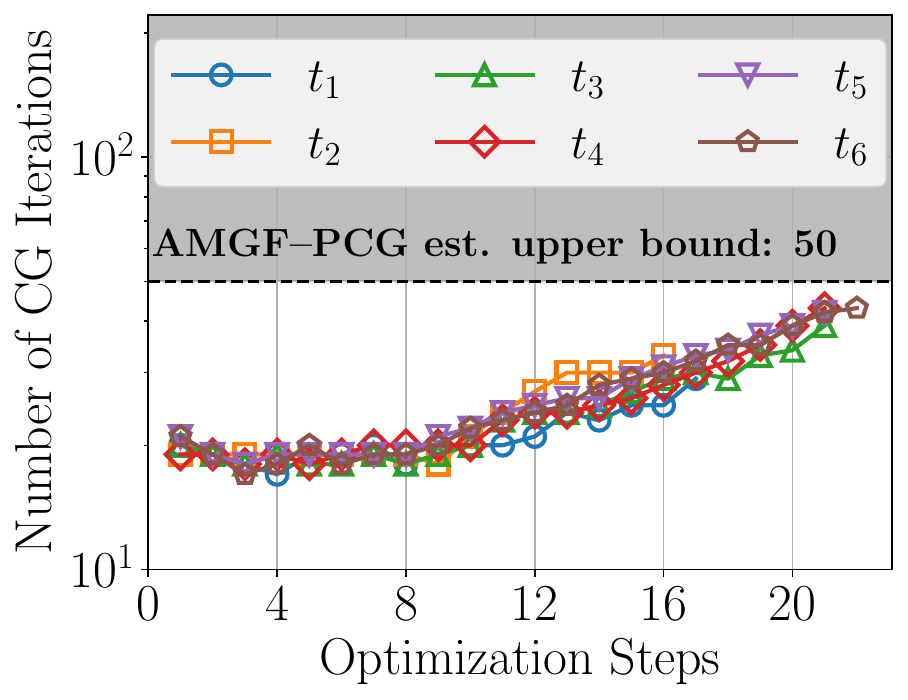}
        \caption{Mesh 3: 392,661 DOFs}
        \label{fig:testNo6lin_ref2}
    \end{subfigure}

    \begin{subfigure}{0.32\textwidth}
        \centering
        \includegraphics[width=0.99\linewidth]{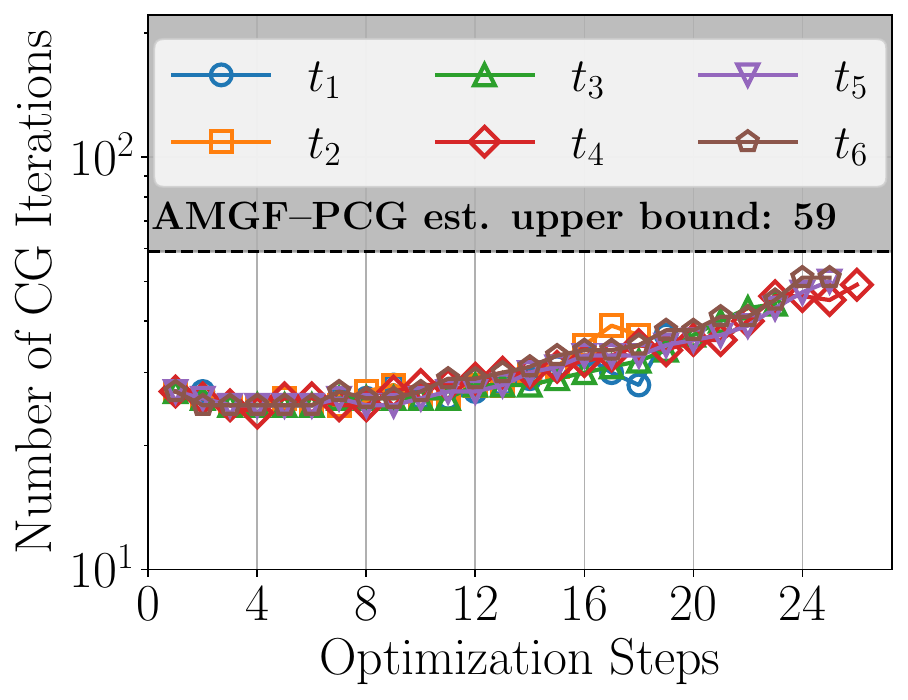}
        \caption{Mesh 4: 3,004,161 DOFs}
        \label{fig:testNo6lin_ref3}
    \end{subfigure}
    \begin{subfigure}{0.32\textwidth}
        \centering
        \includegraphics[width=0.99\linewidth]{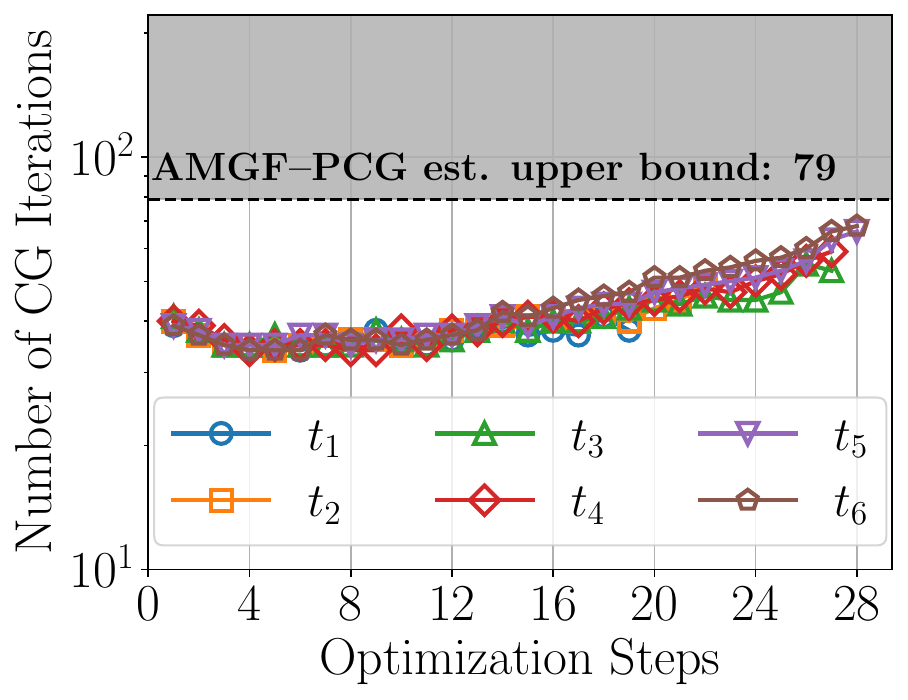}
        \caption{Mesh 5: 23,496,057 DOFs}
        \label{fig:testNo6lin_ref4}
    \end{subfigure}
        \begin{subfigure}{0.32\textwidth}
        \centering
        \includegraphics[width=0.99\linewidth]{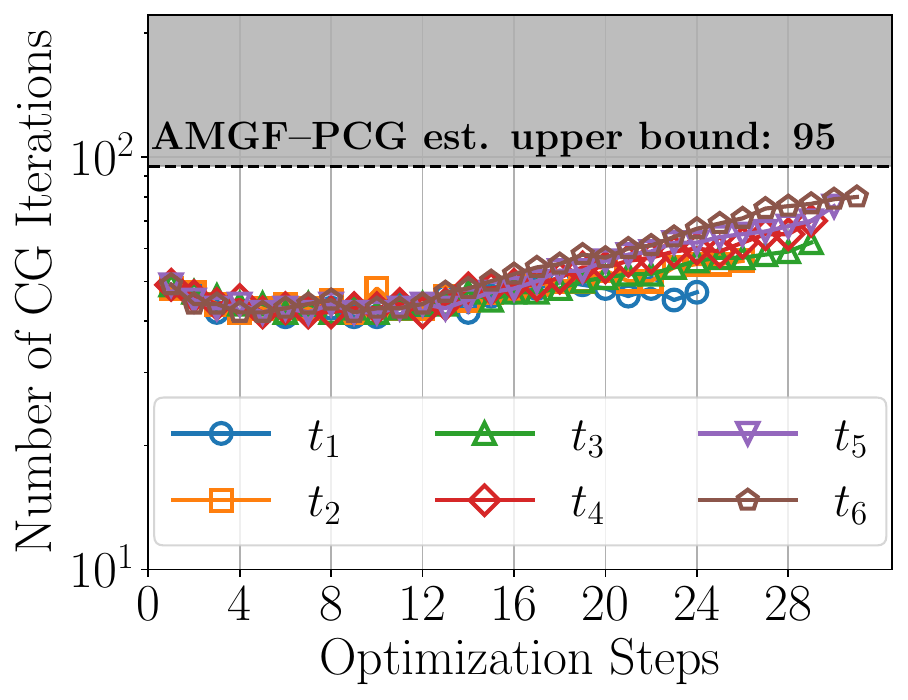}
        \caption{Mesh 6: 185,841,897 DOFs}
        \label{fig:testNo6lin_ref5}
    \end{subfigure}
    \caption{AMGF--PCG convergence for the linear beam-sphere problem. Each curve represents the AMGF--PCG iteration count through the IP optimization method for time steps $t_i$, $i=1,\dots,6$. The horizontal line indicates the estimate of the upper bound derived from the theoretical condition number result \cref{eq:practical_bound}, i.e., the number of iterations of the AMGF--PCG solver when applied to the contact problem is bounded by approximately \(\sqrt{2}\) times the AMG--PCG solver count when applied to the contact-free problem.}
    \label{fig:test6lin_results}
\end{figure}
\begin{figure}[H]
\setlength{\abovecaptionskip}{-1pt} %
    \centering
    \begin{subfigure}{0.32\textwidth}
        \centering
        \includegraphics[width=0.99\linewidth]{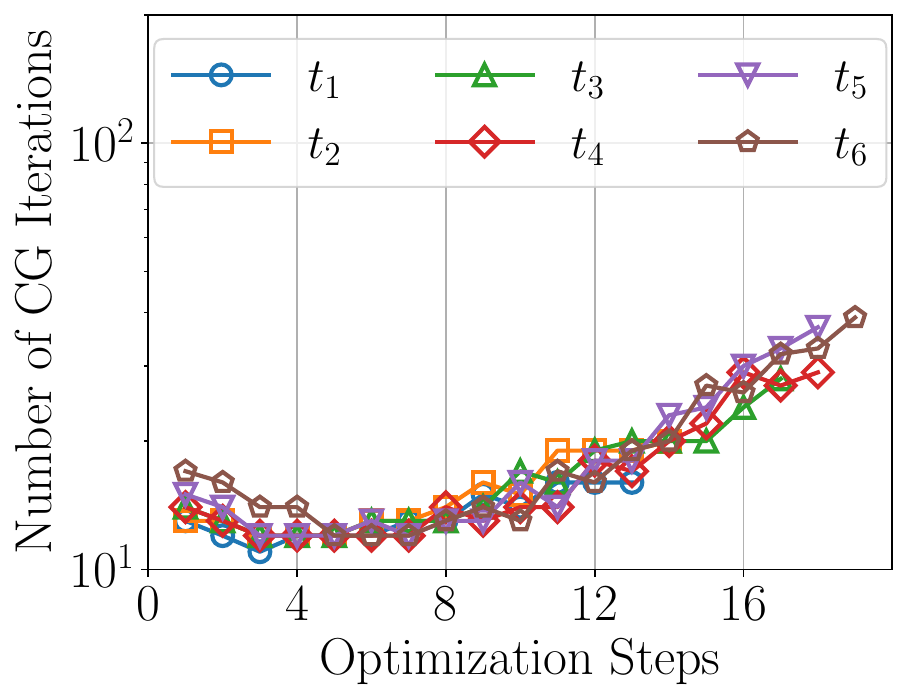}
        \caption{Mesh 1: 7,902 DOFs}
        \label{fig:testNo6nl_ref0}
    \end{subfigure}
    \begin{subfigure}{0.32\textwidth}
        \centering
        \includegraphics[width=0.99\linewidth]{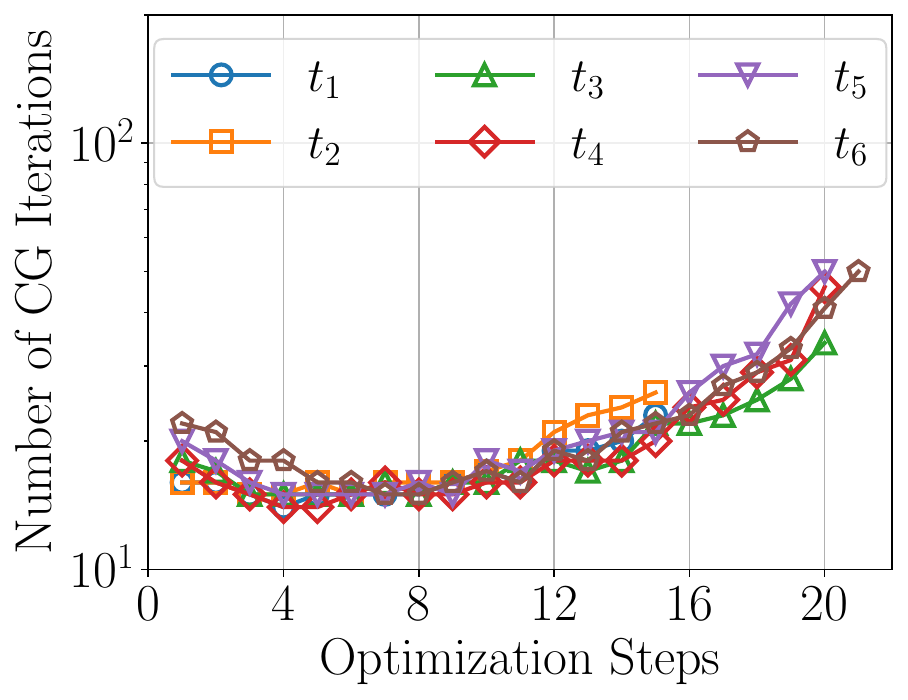}
        \caption{Mesh 2: 53,547 DOFs}
        \label{fig:testNo6nl_ref1}
    \end{subfigure}
        \begin{subfigure}{0.32\textwidth}
        \centering
        \includegraphics[width=0.99\linewidth]{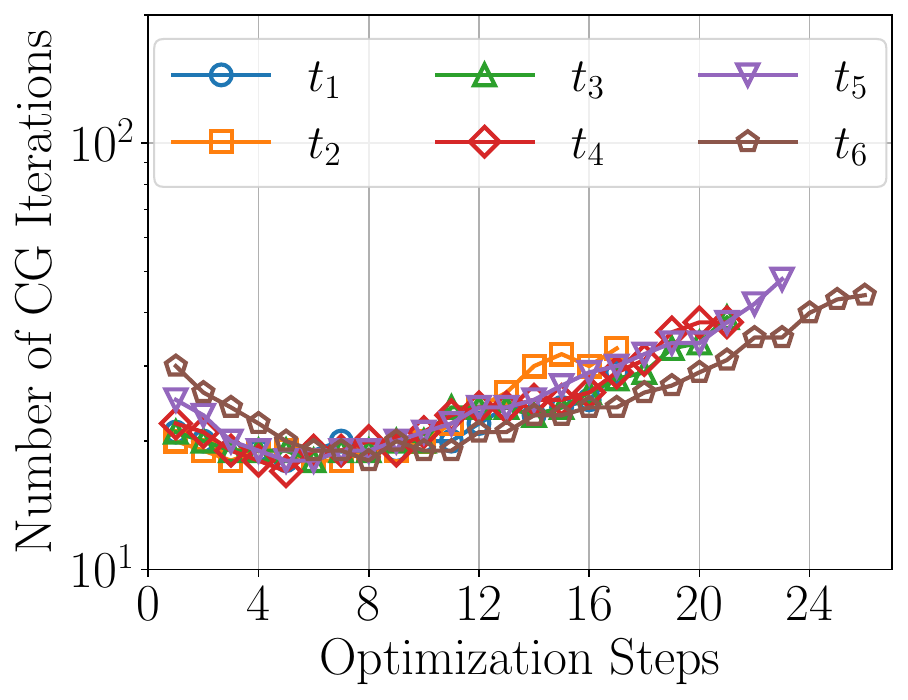}
        \caption{Mesh 3: 392,661 DOFs}
        \label{fig:testNo6nl_ref2}
    \end{subfigure}

    \begin{subfigure}{0.32\textwidth}
        \centering
        \includegraphics[width=0.99\linewidth]{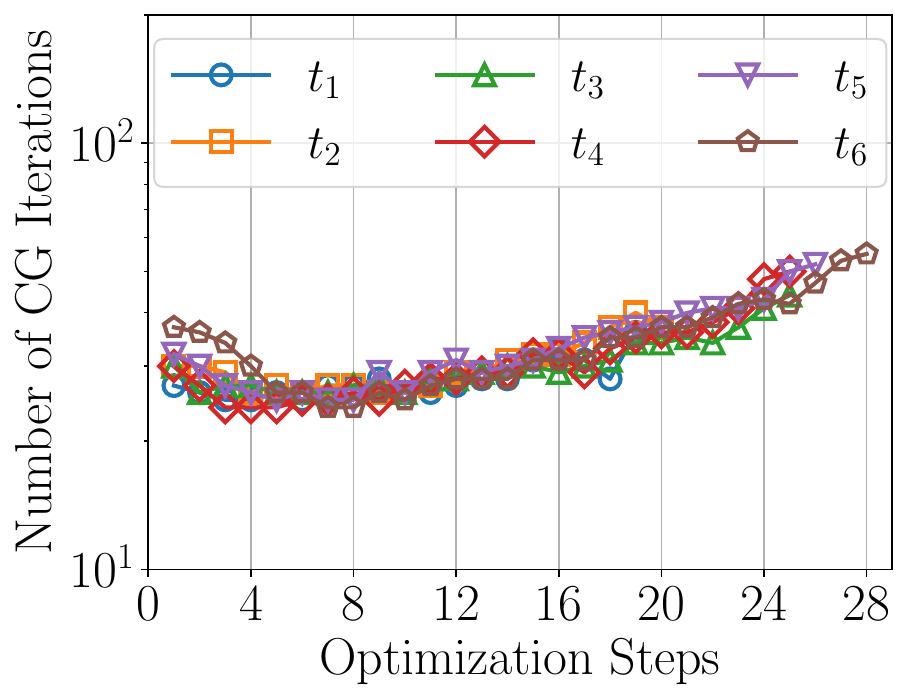}
        \caption{Mesh 4: 3,004,161 DOFs}
        \label{fig:testNo6nl_ref3}
    \end{subfigure}
    \begin{subfigure}{0.32\textwidth}
        \centering
        \includegraphics[width=0.99\linewidth]{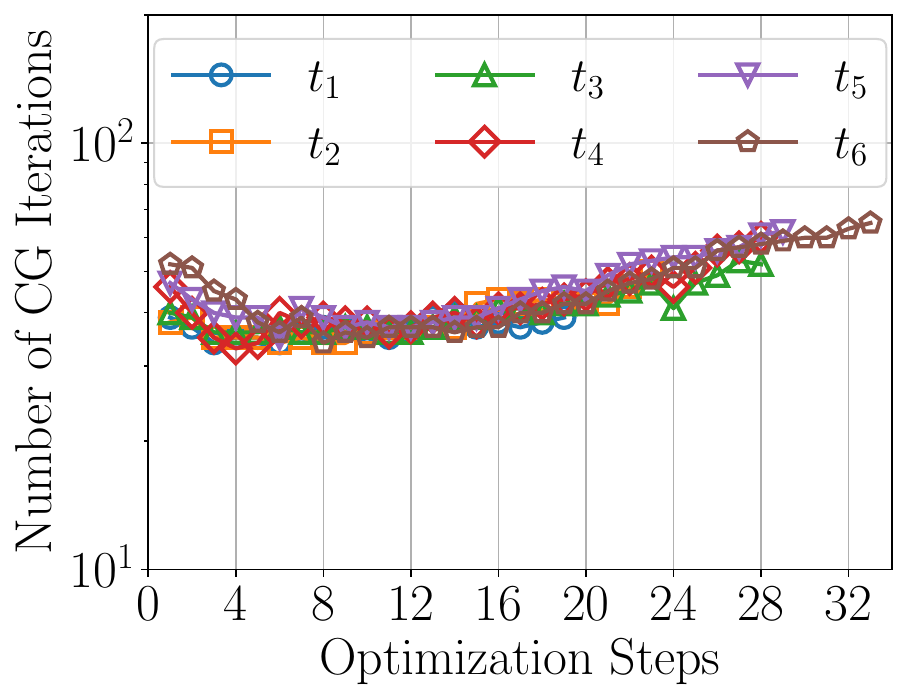}
        \caption{Mesh 5: 23,496,057 DOFs}
        \label{fig:testNo6nl_ref4}
    \end{subfigure}
        \begin{subfigure}{0.32\textwidth}
        \centering
        \includegraphics[width=0.99\linewidth]{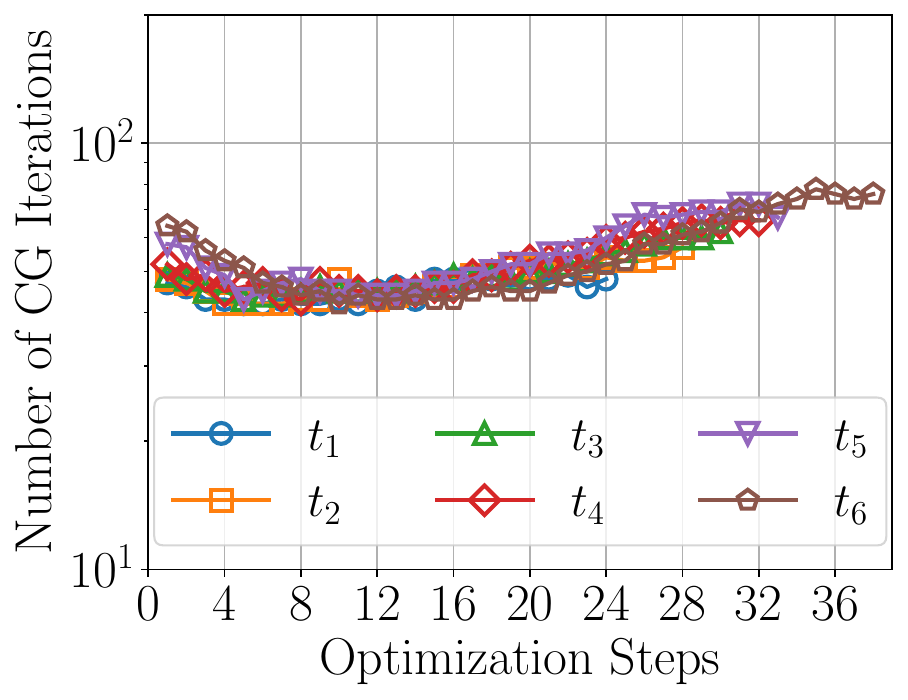}
        \caption{Mesh 6: 185,841,897 DOFs}
        \label{fig:testNo6nl_ref5}
    \end{subfigure}
    \caption{AMGF--PCG convergence for the nonlinear beam-sphere problem. Each curve represents the AMGF--PCG iteration count through the IP optimization method for time steps $t_i$, $i=1,\dots,6$.}
    \label{fig:test6nl_results}
\end{figure}

{\color{revised}
\section{Computational cost}
\label{section:cost}

To assess efficiency, we examine the computational cost of AMGF relative to AMG on the two-block problem described in \Cref{section:Model_problem}. \Cref{fig:speedup} reports the speedup in total simulation time for the first time step (\(t_1\)) when AMGF is used over AMG as a preconditioner to the linear system at every IP step. The bar chart shows the results for all meshes presented in the previous section, with the corresponding number of MPI processors indicated in parentheses. We also show the speedups restricted to the last 10 optimization steps. In all cases AMGF achieves substantial savings, ranging from factors 4\(\times\) to more than 10\(\times\). These improvements arise from the reduction of PCG iterations as illustrated in \Cref{fig:test4_results}. Note that since PCG--AMG iterations are capped at 5000, the reported gains are conservative and would be even larger without this limit. 

The benefits, however, are not uniform across all steps of the IP solver. In the early IP iterations, AMG converges rapidly and although AMGF often requires fewer iterations, the extra filtering step can outweigh this advantage, so AMGF may even be slightly slower. The improvements become significant in the later stages, when contact constraints are enforced and PCG--AMG performance deteriorates. This explains the larger speedups in the last 10 steps and suggests that a dynamic switching strategy, using AMG in the early IP iterations and AMGF once constraints dominate, could yield further improvements. 
\begin{figure}[H]
        \centering
        \includegraphics[width=0.8\linewidth]{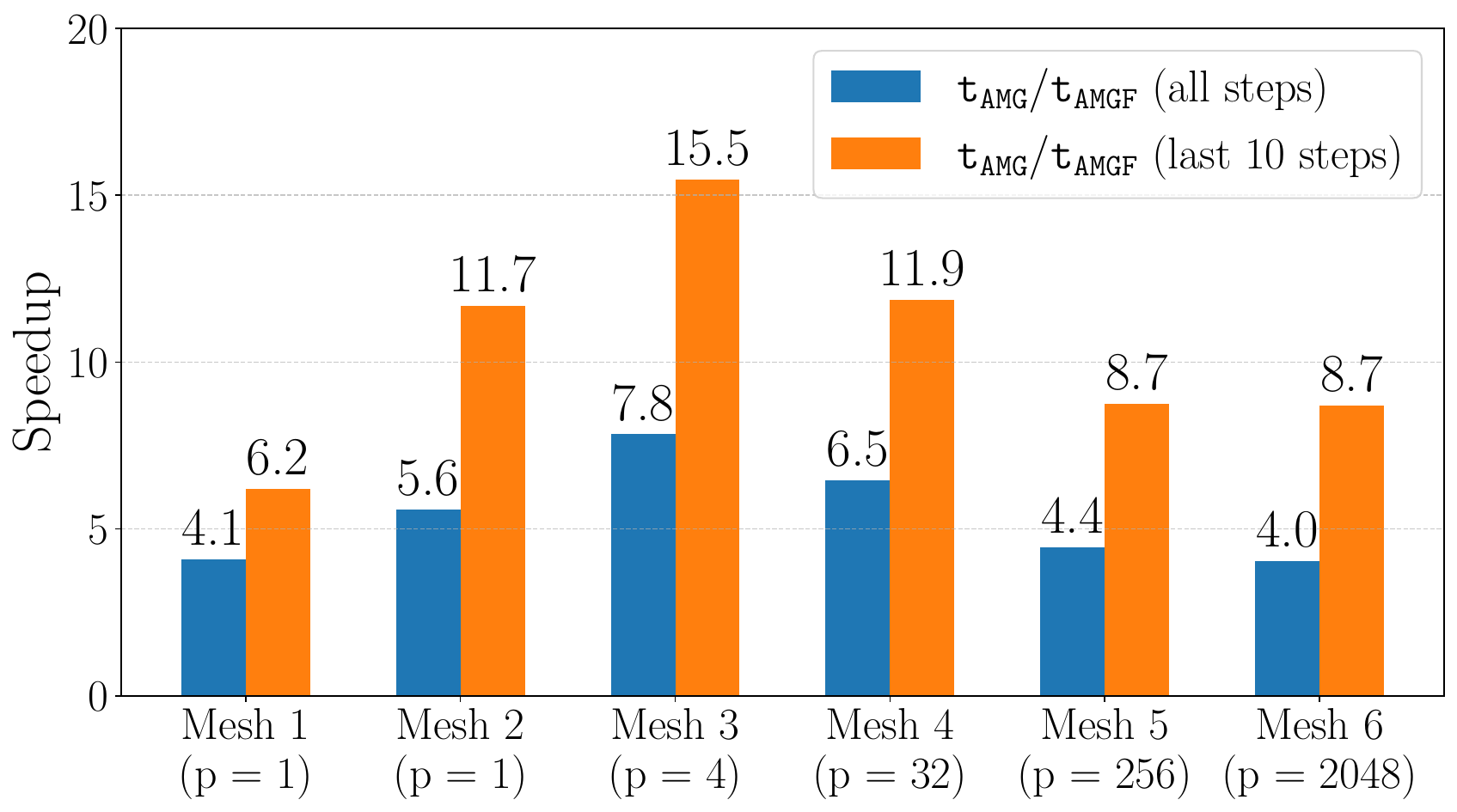}
        \caption{\color{revised}{\bf Time speedup of AMGF compared to AMG for the two-block problem.} Results are shown for all meshes reported in the previous section, with the corresponding number of MPI processors indicated in parentheses. The blue bars represent the ratio of AMG to AMGF total runtime over all optimization steps, while the orange bars correspond to the runtime over the last 10 steps of the IP solver. AMGF consistently achieves substantial savings, from 4\(\times\) to more than 10\(\times\). The higher gains in the last 10 IP steps reflect the deterioration of AMG performance once contact constraints are enforced.}
        \label{fig:speedup}
\end{figure}
\Cref{fig:amgf_pies} shows the runtime breakdown of AMGF into AMG setup and application, filter solver on the contact subspace setup and application, and the remaining CG operations for the three largest  meshes. The AMG setup and application together remain the dominant cost (\(\approx 90\%\)), while the filter solver on the contact subspace contributes only a few percent. Thus, the filtered subspace solver introduces only a minor overhead while enabling the iteration reductions that drive the observed speedups.  Recall that for a 3D problem the total number of unknowns \(\tt n\) scales as \(h^{-3}\) whereas the filtered subspace dimension, \(\tt n_c\) scales as \(h^{-2}\), so the ratio \(\tt n_c /  n \rightarrow 0\) under mesh refinement. A multifrontal direct solver on a 2D subspace of size \(\tt n_c\), has factorization cost \(\mathcal{O}({\tt n_c}^{\frac{3}{2}})\) and memory requirements of \(\mathcal{O}(\tt n_c \log n_c)\). Since \(\tt n_c \ll n\), these costs remain modest in practice and do not alter AMG's near-linear complexity.
\begin{figure}[H]
    \begin{subfigure}{0.32\textwidth}
        \centering
        \includegraphics[width=1.05\linewidth]{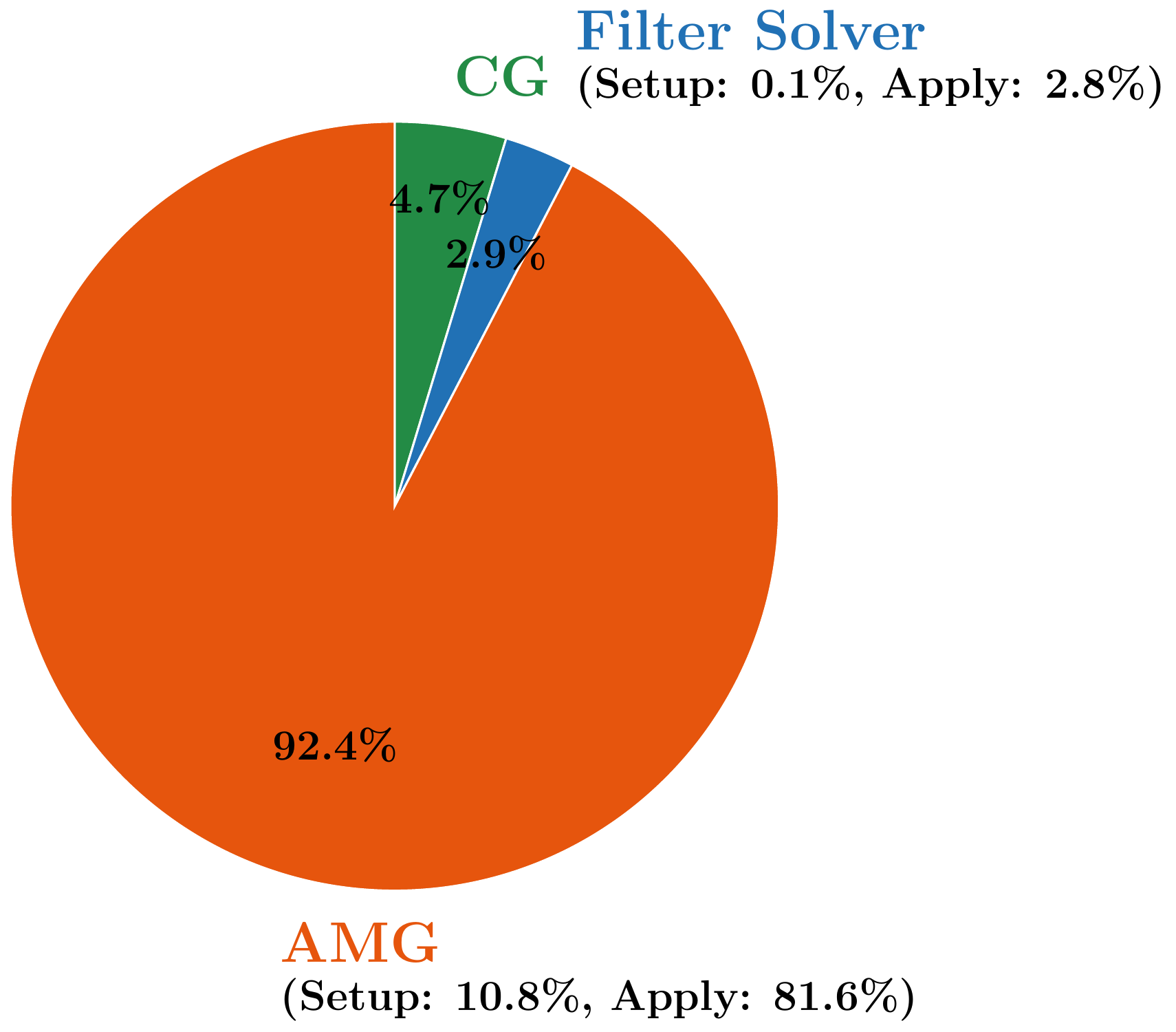}
        \caption{Mesh 4}
        \label{fig:pie_mesh4}
    \end{subfigure}
    \begin{subfigure}{0.32\textwidth}
        \centering
        \includegraphics[width=1.05\linewidth]{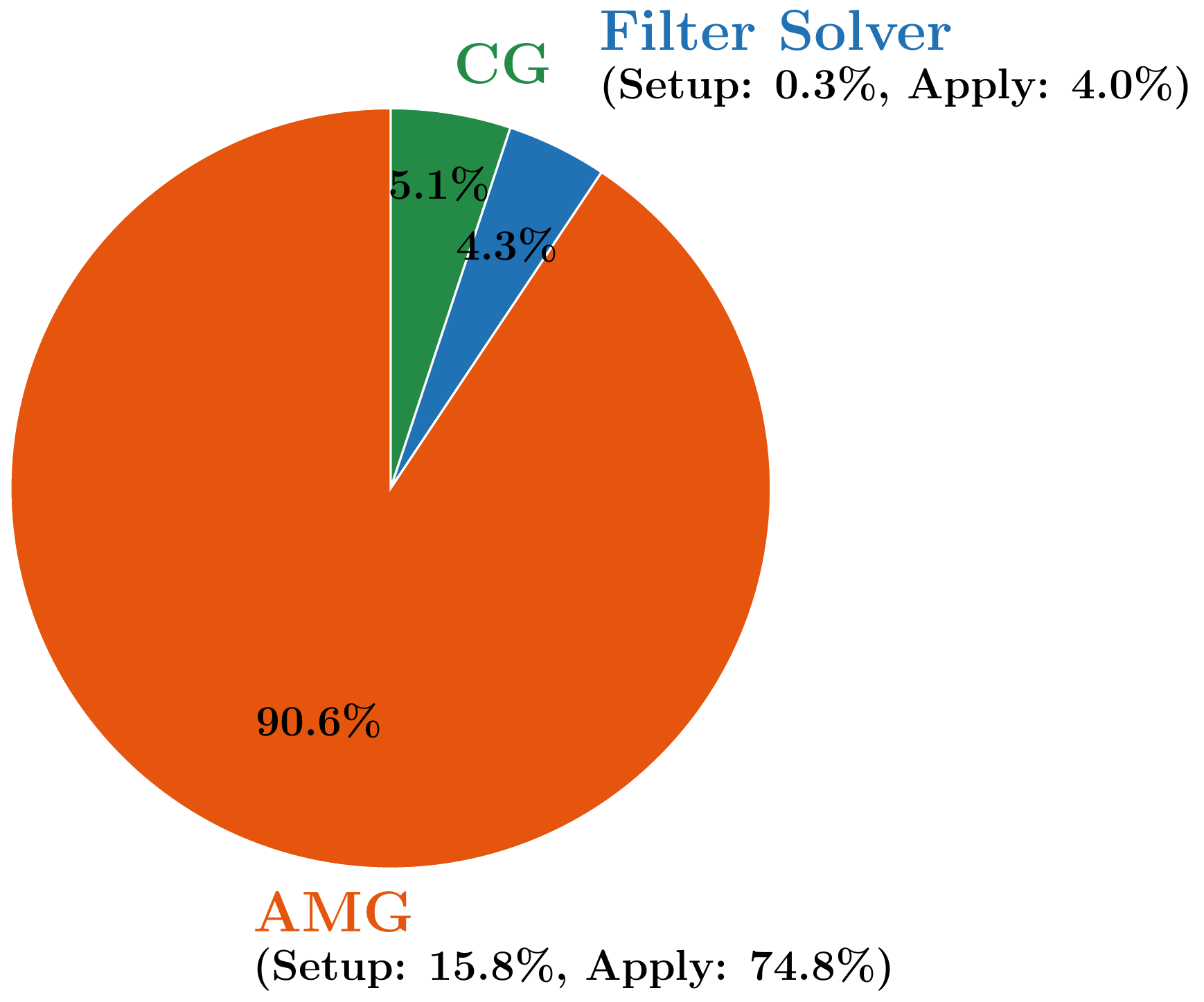}
        \caption{Mesh 5}
        \label{fig:pie_mesh5}
    \end{subfigure}
    \begin{subfigure}{0.32\textwidth}
        \centering
        \includegraphics[width=1.05\linewidth]{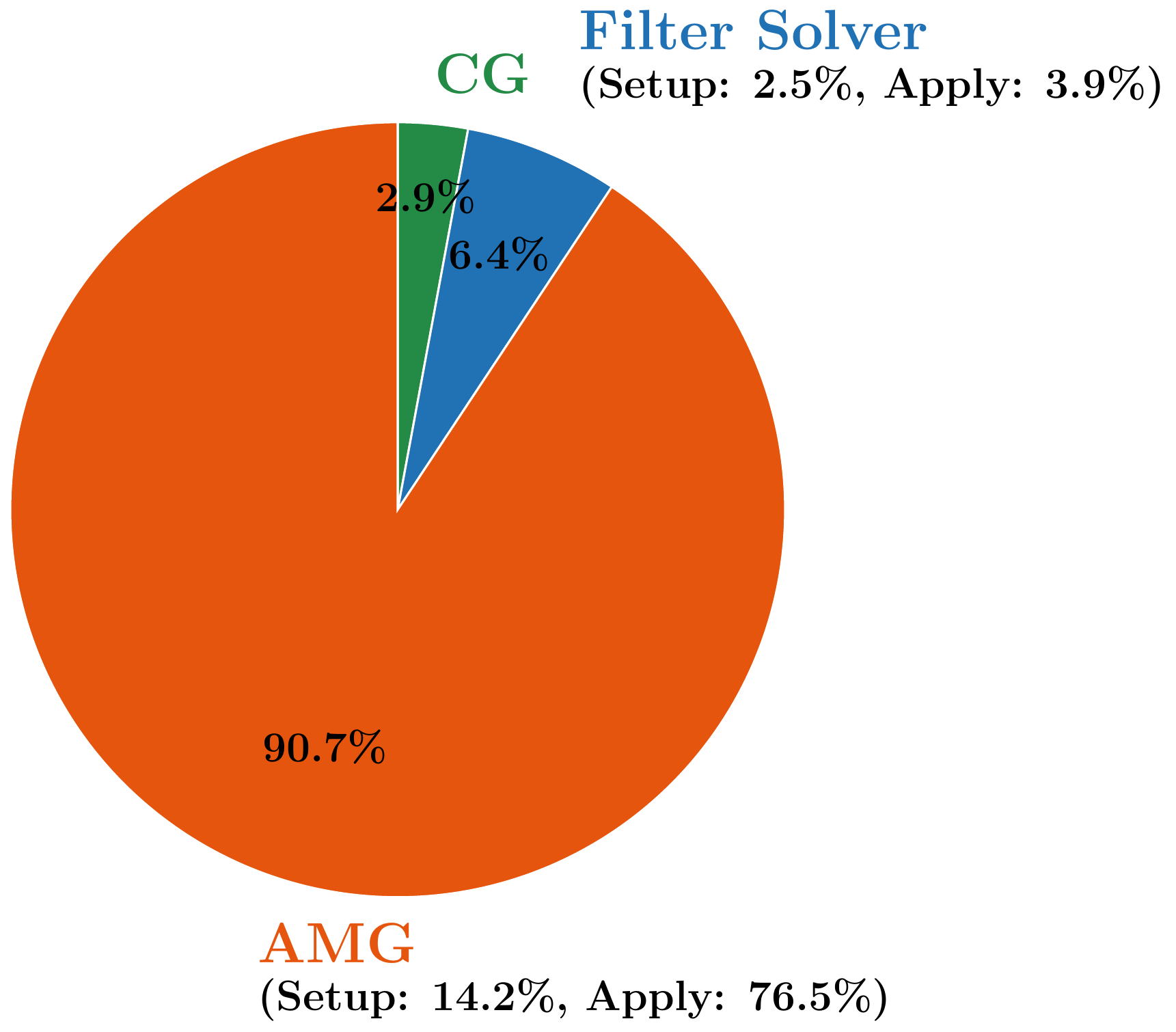}
        \caption{Mesh 6}
        \label{fig:pie_mesh6}
    \end{subfigure}
        \caption{\color{revised}Runtime breakdown of AMGF for representative large-scale runs (Meshes 4–6). The total runtime is partitioned into AMG setup, AMG application, the contact subspace filter solver setup and application, and the remaining CG operations. AMG setup and application dominates with \(\approx 90\%\)  of the total cost. The filter solver on the contact subspace contributes approximately \(3-7\%\), which is comparable with the cost of the remaining CG operations. This confirms that the filtered subspace solve introduces only a modest overhead while enabling the iteration reductions that drive the overall speedups.}
        \label{fig:amgf_pies}
\end{figure}

}

\section{Conclusions}
\label{section:conclusion}

In this paper, we introduced AMGF, a scalable and robust preconditioner designed to enhance the performance of IP methods for large-scale contact mechanics problems. By augmenting the classical AMG solver with a filtering step that targets a small subspace associated with contact constraints, AMGF directly addresses one of the primary sources of ill-conditioning in these systems. This targeted correction is both effective and computationally inexpensive.

Theoretical analysis shows that the condition number of the AMGF--preconditioned system remains bounded and close to that of the AMG--preconditioned system in the absence of contact. Numerical experiments across a range of challenging examples, including nonlinear materials and large deformations, confirm this behavior. In all cases, AMGF achieves reliable convergence, even where classical AMG fails, and exhibits the predicted bounded iteration growth with respect to both mesh refinement and contact constraint enforcement, consistent with the theoretical results. {\color{revised} The computational study further demonstrates that the additional filtering step remains inexpensive and leads to substantial speedups over classical AMG. Moreover, the results suggest that a dynamic strategy that switches between AMG and AMGF depending on the stage of the IP iterations could yield even greater efficiency.} 

AMGF offers a practical and effective solver for scalable contact simulations. More broadly, it applies to a broader class of problems, optimization-based or otherwise, where solver performance deteriorates due to a low-dimensional subspace, such as those arising from localized constraints or interface conditions. Its design is simple, general, and easily adaptable. Future work will explore extensions to more complex contact phenomena and broader integration into multiphysics simulation frameworks.
\section*{Acknowledgments} We thank Steven Wopschall for all his help with the Tribol Contact Interface Physics Library. 
\section*{Disclaimer} This manuscript has been authored by Lawrence Livermore National Security, LLC under Contract No. DE-AC52-07NA27344 with the US. Department of Energy. The United States Government retains, and the publisher, by accepting the article for publication, acknowledges that the United States Government retains a nonexclusive, paid-up, irrevocable, world-wide license to publish or reproduce the published form of this manuscript, or allow others to do so, for United States Government purposes.

\bibliographystyle{siamplain}
\bibliography{ref}
\end{document}